\documentclass[unicode, 12pt, a4paper]{article}

\usepackage{amsmath,amsthm}
\usepackage[colorlinks=true, urlcolor=blue, pdfborder={0 0 0}]{hyperref}
\usepackage[T2A]{fontenc}
\usepackage[utf8]{inputenc}
\usepackage[english,russian]{babel}
\usepackage{mathtools}
\usepackage[framemethod=1]{mdframed}
\usepackage{lipsum}

\usepackage{dsfont}

\usepackage{tikz}
\usepackage{amsfonts}

\usepackage{makecell}

\usepackage[left=3cm,right=3cm,
    top=3cm,bottom=3cm,bindingoffset=0cm]{geometry}

\allowdisplaybreaks[1]

\newcommand*\circled[1]{\tikz[baseline=(char.base)]{
		\node[shape=circle,draw,inner sep=2pt] (char) {#1};}}

\newcommand{\norm}[1]{\left\lVert#1\right\rVert}
\newcommand\abs[1]{\left|#1\right|}
\newcommand\uprule{\rule{0mm}{1.9ex}}
\newcommand{\argmin}{\operatornamewithlimits{argmin}}

\newtheorem{theorem}{Теорема}
\newtheorem{lemma}{Лемма}
\newtheorem{corollary}{Следствие}
\newtheorem{definition}{Определение}
\newtheorem{remark}{Замечание}

\DeclarePairedDelimiter\ceil{\lceil}{\rceil}

\newcommand\tline[4]{$\underset{\text{#1}}{\text{\underline{\hspace{#2} #4 \hspace{#3}}}}$}

\begin{document}

\begin{titlepage}
	\begin{center}
		\bfseries
		ПРАВИТЕЛЬСТВО РОССИЙСКОЙ ФЕДЕРАЦИИ\\
		ФЕДЕРАЛЬНОЕ ГОСУДАРСТВЕННОЕ АВТОНОМНОЕ\\
		ОБРАЗОВАТЕЛЬНОЕ УЧРЕЖДЕНИЕ ВЫСШЕГО ОБРАЗОВАНИЯ\\
		НАЦИОНАЛЬНЫЙ ИССЛЕДОВАТЕЛЬСКИЙ УНИВЕРСИТЕТ\\
		«ВЫСШАЯ ШКОЛА ЭКОНОМИКИ»
	\end{center}
	
	\begin{center}
		Факультет компьютерных наук 
	\end{center}
	
	\hfill
	\begin{minipage}{.45\linewidth}
	\begin{center}
	УТВЕРЖДАЮ\\
	Академический руководитель\\
	образовательной программы\\
	«Математические методы оптимизации и стохастики»,\\~\\
	\underline{\hspace{3cm}} В.Г.Спокойный\\
	«\underline{\hspace{0.5cm}}» \underline{\hspace{3cm}} 2017г.
	\end{center}
	\end{minipage}
	
	\begin{center}
	\bfseries
		Выпускная квалификационная работа 
	\end{center}
	на тему \\
	«Зеркальный вариант метода подобных треугольников для задач условной оптимизации»\\~\\
	тема на английском языке \\
	«Mirror version of similar triangles method for constrained optimization problems»
	
	\begin{center}
		по направлению подготовки 01.04.02 «Математические методы оптимизации и стохастики» 
	\end{center}
	
	\begin{center}
	\begin{tabular}[t]{|l|l|}
	\hline
	\makecell[l]{
	Научный руководитель\\~\\
	\tline{Должность, место работы}{0.25in}{0.25in}{Доцент, НИУ ВШЭ}\\
	\tline{ученая степень, ученое звание}{0.66in}{0.67in}{Д.ф.-м.н}\\
	\tline{И.О. Фамилия}{0.46in}{0.46in}{А.В. Гасников}\\
	\tline{Оценка}{1in}{0.97in}{}\\
	\tline{Подпись, Дата}{1in}{0.97in}{}\\
	}
	&
		\makecell[l]{
		Выполнил\\
		студент группы М15МОС\\
		2 курса магистратуры\\
		образовательной программы\\ «Математические методы\\ оптимизации и стохастики»\\~\\
		\tline{И.О. Фамилия}{0.53in}{0.54in}{A.И. Тюрин}\\
		\tline{Подпись, Дата}{1in}{0.97in}{}\\
		} \\
	\hline
	\end{tabular}
	\end{center}
	
	\begin{center}
		\bf{Москва 2017}
	\end{center}

\end{titlepage}

\begin{center}Аннотация\end{center}

Наука о методах оптимизации является бурно развивающей в наше время. В машинном обучении, компьютерном зрении, биологии, медицине, конструировании и во многих других отраслях методы оптимизации имеют огромную популярность и являются одним из важнейших инструментов. Одна из основных целей науки: получить некоторый "универсальный"\, метод, который будет хорошо работать на всех задачах, независимо от гладкости задачи, точности вычисления градиента и других параметров, который характеризуют задачу. В нашей работе мы предлагаем метод, представляющий из себя "универсальный"\, для многих постановок, но при этом он прост в изложении и понимании.
\pagebreak

\begin{center}Abstract\end{center}
Science about optimization methods is rapidly developing today. In machine learning, computer vision, biology, medicine, construction and in many other different areas optimization methods have vast popularity and they appear as important tool. One of the most important goals in optimization: create some "universal"\, method, which will have good performance in all problems regardless smoothness of a task, computation precision of gradient and other parameters which characterize a problem. In this thesis we propose a method which is "universal"\, for different problems and, at the same time, is simple for understanding.
\pagebreak

\tableofcontents
\newpage

\section{Введение}

В данной работе подробно описан метод, называемый зеркальным методом треугольника, этот метод был получен по аналогии с оригинальным методом треугольника \cite{gasnikov2016universal}. Основное отличие заключается в том, что в методе из \cite{gasnikov2016universal} во вспомогательном шаге происходит накопление градиентов, как это делается, например, в \cite{nesterov2009primal}. В нашей же работе вспомогательный шаг имеет структуру зеркального спуска \cite{beck2003mirror} \cite{duchi2010composite}. Зеркальный метод треугольника (ЗМТ) был успешно обобщен на различные оптимизационные задачи, об этих обобщениях рассказывается в данной работе. Структура выпускной квалификационной работы следующая: в разделе \ref{sec:mmt} описан базовый метод для общей задачи оптимизации, в разделе \ref{sec:minmax} описано обобщение ЗМТ для задачи минимакса с адаптивностью, в разделе \ref{sec:mmtDL} рассказывается о ЗМТ с $(\delta, L)$-оракулом \cite{devolder2013exactness}, в разделе \ref{sec:rand} о решении оптимизационной задачи с оракулом, который выдает смещенную рандомизированную оценку истинного градиента.

\section{Зеркальный метод треугольника} \label{sec:mmt}

Введем для начала общую постановку задачи гладкой выпуклой оптимизации \cite{nesterov2010methods}. Пусть $\mathds{R}^n$ - евклидово конечномерное действительное векторное пространство с произвольной нормой $\norm{}$. Пусть определена функция $f(x): Q \longrightarrow \mathds{R}$.  

Будем полагать, что
\begin{enumerate}
	\item $Q \subseteq \mathds{R}^n$, выпуклое, замкнутое.
	\item $f(x)$ - непрерывная и выпуклая функция на $Q$.
	\item $f(x)$ ограничена снизу на $Q$ и достигает своего минимума $f_*$ в некоторой точке (необязательно одной) $x_* \in Q$.
	\item $\nabla f(x)$ существует на $Q$ и является липшицевым с константой $L$, то есть
	\begin{align}
	\norm{\nabla f(x) - \nabla f(y)}_* \leq L\norm{x - y} \,\,\,\,\,\, \forall x, y \in Q.
	\end{align}
	Где $\norm{\lambda}_* = \max\limits_{\norm{\nu} \leq 1;\nu \in \mathds{R}^n}\langle \lambda,\nu\rangle\,\,\,\forall \lambda \in \mathds{R}^n$.
\end{enumerate}

Рассматривается следующая задача оптимизации:
\begin{align}
\label{mainTask}
f(x) \rightarrow \min_{x \in Q}.
\end{align}

Введем два понятия: прокс-функция и расстояние Брегмана \cite{gupta2008bregman}.

\begin{definition}
$d(x):Q \rightarrow \mathds{R}$ называется прокс-функцией, если $d(x)$ непрерывно дифференцируемая на $Q$ и $d(x)$ является 1 - сильно выпуклой относительно нормы $\norm{}$.
\end{definition}

В общем случае определение прокс-функции немного сложнее, но для класса задач, которые мы решаем, этого достаточно.

\begin{definition}
Расстоянием Брегмана называется 
\begin{align}
V(x,y) = d(x) - d(y) - \langle\nabla d(y), x - y\rangle,
\end{align}
где $d(x)$ - произвольная прокс-функция.
\end{definition}

Легко показать, что $V(x,y) \geq \frac{1}{2}\norm{x - y}^2.$

Во всех далее предложенных алгоритмах имеется некоторая точка, с которой начинается работа метода.
\begin{definition}
$x_0$ - начальная точка работы алгоритма.
\end{definition}

\begin{definition}
Обозначим за $R^2$ такое число, что  $V(x_*, x_0) \leq R^2$.
\end{definition}

 Рассмотрим алгоритм зеркального метода треугольника.

\begin{mdframed}
\textbf{Дано:} $x_0$ - начальная точка, $N$ - количество шагов метода и $L$ - константа Липшица $\nabla f(x)$.\\

\textbf{0 - шаг:}
\begin{gather}
y_0 = u_0 = x_0\\
\alpha_0 = 0\\
A_0 = \alpha_0
\end{gather}

\textbf{$\boldsymbol{k+1}$ - шаг:}
\begin{gather}
\alpha_{k+1} = \frac{1}{2L} + \sqrt{\frac{1}{4L^2} + \alpha_k^2}\\
A_{k+1} = A_k + \alpha_{k+1}\\
y_{k+1} = \frac{\alpha_{k+1}u_k + A_k x_k}{A_{k+1}} \label{eqymir} \\
\phi_{k+1}(x) = V(x, u_k) + \alpha_{k+1}[f(y_{k+1}) + \langle \nabla f(y_{k+1}), x - y_{k+1} \rangle]\\
u_{k+1} = \argmin_{x \in Q}\phi_{k+1}(x) \label{equmir}\\
x_{k+1} = \frac{\alpha_{k+1}u_{k+1} + A_k x_k}{A_{k+1}} \label{eqxmir}
\end{gather}
\end{mdframed}

Для данного алгоритма имеется следующая гарантия скорости сходимости.

\begin{theorem}
	\label{sec1:mainTheorem}
	Пусть $x_*$ - решение (\ref{mainTask}), тогда для $x_N$ из алгоритма ЗМТ верно
	\begin{equation*}
	f(x_N) - f(x_*) \leq \frac{4LR^2}{(N+1)^2}
	\end{equation*}
\end{theorem}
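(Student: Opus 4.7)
Стратегия стандартная для методов типа ускоренного градиентного спуска: я планирую вывести одношаговое неравенство
\begin{equation*}
A_{k+1}[f(x_{k+1}) - f(x_*)] - A_k[f(x_k) - f(x_*)] \leq V(x_*, u_k) - V(x_*, u_{k+1}),
\end{equation*}
просуммировать его телескопически по $k = 0, \ldots, N-1$ и, используя $A_0 = 0$, $u_0 = x_0$ и $V(x_*, x_0) \leq R^2$, получить $A_N[f(x_N) - f(x_*)] \leq R^2$. Останется оценить $A_N$ снизу.

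С рекуррентной формулой всё довольно просто: уравнение для $\alpha_{k+1}$ равносильно $L\alpha_{k+1}^2 = \alpha_{k+1} + L\alpha_k^2$, поэтому из $A_0 = 0 = L\alpha_0^2$ по индукции получается равенство $A_k = L\alpha_k^2$. Отсюда $\alpha_{k+1} \geq \alpha_k + 1/(2L)$, и, учитывая $\alpha_1 = 1/L$, имеем $\alpha_k \geq (k+1)/(2L)$ при $k \geq 1$, откуда $A_N \geq (N+1)^2/(4L)$ и искомая оценка $f(x_N) - f(x_*) \leq R^2/A_N \leq 4LR^2/(N+1)^2$.

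Основная работа — вывод одношагового неравенства. Сначала я применю $L$-липшицевость градиента к паре $(y_{k+1}, x_{k+1})$. Благодаря \eqref{eqymir} и \eqref{eqxmir} выполняется ключевое тождество $A_{k+1}(x_{k+1} - y_{k+1}) = \alpha_{k+1}(u_{k+1} - u_k)$, и подстановка с учётом $A_{k+1} = L\alpha_{k+1}^2$ превращает квадратичный остаток ровно в $\tfrac{1}{2}\norm{u_{k+1} - u_k}^2$, который по 1-сильной выпуклости прокс-функции оценивается сверху величиной $V(u_{k+1}, u_k)$. Затем я воспользуюсь выпуклостью $f$ в точке $y_{k+1}$ (для оценки снизу $A_k f(x_k)$) вместе с тождеством $A_k(y_{k+1} - x_k) = \alpha_{k+1}(u_k - y_{k+1})$, также вытекающим из \eqref{eqymir}, чтобы привести $A_{k+1} f(x_{k+1}) - A_k f(x_k)$ к виду
\begin{equation*}
\alpha_{k+1}\bigl[f(y_{k+1}) + \langle \nabla f(y_{k+1}), u_{k+1} - y_{k+1}\rangle\bigr] + V(u_{k+1}, u_k).
\end{equation*}
Наконец, применю условие оптимальности $u_{k+1}$: из 1-сильной выпуклости $V(\cdot, u_k)$ следует $\phi_{k+1}(x_*) \geq \phi_{k+1}(u_{k+1}) + V(x_*, u_{k+1})$; в сочетании с неравенством $f(y_{k+1}) + \langle \nabla f(y_{k+1}), x_* - y_{k+1}\rangle \leq f(x_*)$ это даст $\alpha_{k+1}\bigl[f(y_{k+1}) + \langle \nabla f(y_{k+1}), u_{k+1} - y_{k+1}\rangle\bigr] \leq \alpha_{k+1} f(x_*) + V(x_*, u_k) - V(x_*, u_{k+1}) - V(u_{k+1}, u_k)$. Слагаемые $V(u_{k+1}, u_k)$ сокращаются, и получается искомое одношаговое неравенство.

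Главным препятствием я считаю именно «склейку» квадратичного остатка из гладкости с расстоянием Брегмана $V(u_{k+1}, u_k)$ — эта балансировка работает исключительно благодаря специальному выбору $\alpha_{k+1}$, обеспечивающему равенство $A_{k+1} = L\alpha_{k+1}^2$, и именно этот подбор «движет» всю схему ускорения. Помимо этого, нужно аккуратно отслеживать линейные соотношения между $x_k, y_{k+1}, x_{k+1}, u_k, u_{k+1}$, чтобы все сдвиги согласованно вписывались в одну телескопическую оценку.
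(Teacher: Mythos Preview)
Your proposal is correct and follows essentially the same route as the paper: the same key identity $A_{k+1}=L\alpha_{k+1}^2$, the same geometric relations $A_{k+1}(x_{k+1}-y_{k+1})=\alpha_{k+1}(u_{k+1}-u_k)$ and $A_k(y_{k+1}-x_k)=\alpha_{k+1}(u_k-y_{k+1})$, the descent lemma, convexity, and the three-point Bregman inequality for the minimizer $u_{k+1}$, all combined into the same telescoping one-step bound and the same lower bound $A_N\ge (N+1)^2/(4L)$. The only cosmetic difference is the order of assembly: the paper's Appendix splits the argument into two lemmas (first bounding $\alpha_{k+1}\langle\nabla f(y_{k+1}),u_k-u\rangle$, then plugging convexity), whereas you start directly from the descent lemma --- this latter ordering is in fact the one the paper itself uses in the proof of Lemma~\ref{lemma_maxmin_3}.
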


Данная теорема утверждает, что предложенный метод является быстро градиентным, проксимальным, причем на каждом шаге считается только одна проекция. Доказательство Теоремы \ref{sec1:mainTheorem} описано в Аппендиксе \ref{appendix:profMMT}.

\section{Решение задачи минимакса с помощью адаптивного ЗМТ} \label{sec:minmax}

Рассмотрим следующую минимаксную задачу оптимизации:
\begin{gather}
	\label{minmax}
	f(x) = \max_{i = 1,\dots,M}\{f_i(x)\} + h(x) \rightarrow \min_{x \in Q}.
\end{gather}

$f_i(x),i = 1,\dots,M$ - выпуклые функции с $L$ липшицевым градиентом на $Q$.
$h(x)$ - выпуклая функция на $Q$. $Q$ - выпуклое, замкнутое множество. Аналогично, как и в разделе \ref{sec:mmt}, мы предполагаем, что $f(x)$ достигает своего минимума в точке $x_*$ и $V(x_*, x_0) \leq R^2$.

Рассмотрим модифицированный алгоритм зеркального метода треугольника для задачи минимакса (\ref{minmax}) с адаптивным подбором "локальной"\,\,константы Липшица:

\begin{mdframed}
\textbf{Дано:} $x_0$ - начальная точка, $N$ - количество шагов и $L_0$ некоторая константа, которая удовлетворяет условию: $L_0 \leq L$.

\textbf{0 - шаг:}
\begin{gather}
y_0 = u_0 = x_0 \\
L_1 = \frac{L_0}{2}\\
\alpha_0 = 0\\
A_0 = \alpha_0
\end{gather}

\textbf{$\boldsymbol{k+1}$ - шаг:}
\begin{center}
	Найти наибольший корень $\alpha_{k+1} : A_k + \alpha_{k+1} = L_{k+1}\alpha^2_{k+1}$
\end{center}
\begin{gather}
A_{k+1} = A_k + \alpha_{k+1}\\
y_{k+1} = \frac{\alpha_{k+1}u_k + A_k x_k}{A_{k+1}} \label{eqymir2} \\
\phi_{k+1}(x) = V(x, u_k) + \alpha_{k+1}\Big(\max_{j = 1,\dots,M}[f_{j}(y_{k+1}) + \langle \nabla f_{j}(y_{k+1}), x - y_{k+1} \rangle] + h(x)\Big)\\
u_{k+1} = \argmin_{x \in Q}\phi_{k+1}(x) \label{equmir2}\\
x_{k+1} = \frac{\alpha_{k+1}u_{k+1} + A_k x_k}{A_{k+1}}. \label{eqxmir2}
\end{gather}
Если выполнено условие
\begin{equation}
\begin{gathered}
f(x_{k+1}) \leq \max_{j = 1,\dots,M}\{f_{j}(y_{k+1}) + \langle \nabla f_{j}(y_{k+1}), x_{k+1} - y_{k+1} \rangle\} +\\  + \frac{L_{k+1}}{2}\norm{x_{k+1} - y_{k+1}}^2 + h(x_{k+1}),
\label{exitL}
\end{gathered}
\end{equation}

то
\begin{gather}
L_{k+2} = \frac{L_{k+1}}{2}
\end{gather}
и перейти к следующему шагу, иначе
\begin{gather}
L_{k+1} = 2L_{k+1}
\end{gather}
и повторить текущий шаг.

\end{mdframed}

\begin{remark}
\leavevmode
\label{remark_maxmin}
\begin{enumerate}
	\item 
	Количество внутренних циклов в каждом шаге конечно. Это следует из того, что на каждом шаге цикла мы увеличиваем $L_{k+1}$ в 2 раза, а значит $L_{k+1}$ через конечное количество шагов станет больше $L$, поэтому из $L$ - Липшевости $\nabla f_j(x)$ следует, что через конечное количество внутренних циклов выполнится условие (\ref{exitL}).
	\item
	Для всех $k \geq 0$ выполнено
	\begin{equation*}
		L_{k} \leq 2L.
	\end{equation*}
	Для $k = 0$ верно из условия на $L_0$. Для $k \geq 1$ это следует из того, что мы выйдем из внутреннего цикла, где подбирается $L_k$, ранее, чем $L_{k}$ станет больше $2L$.
	\item
	Оценим общее число обращений за значениями всех сразу $M$ функций. Внутри каждого шага алгоритм как минимум 1 раз решает задачу (\ref{equmir2}) и делает проверку (\ref{exitL}), пусть $j_k$ - количество дополнительных внутренних циклов $k$-ого шага, где подбирается $L_k$, и, соответственно, количество дополнительных решений (\ref{equmir2}) и проверок (\ref{exitL}). Тогда общее количество обращений за значениями всех функций $f_j(x)$ равно
	\begin{gather*}
	\sum_{k=1}^{N}2(j_k + 1) = \sum_{k=1}^{N}2((j_k - 1) + 2) = \sum_{k=1}^{N}2\left(\log_2(\frac{L_k}{L_{k-1}}) + 2\right) = \\= 4N + 2\log_2\left(\frac{L_N}{L_0}\right) \leq  4N + 2\log_2\left(\frac{2L}{L_0}\right).
	\end{gather*}
	Второе равенство следует из того, что $L_{k} = 2^{j_k}\frac{L_{k-1}}{2}$.
	Поэтому мы получаем, что в среднем на каждом шаге мы будем считать значение всех функций 4 раза. Можно показать, что градиент всех функций $f_j(x)$ мы в среднем будем считать на каждом шаге 2 раза.
\end{enumerate}
\end{remark}

\begin{lemma}
	\label{lemma_maxmin_1}
	Пусть для последовательности $\alpha_k$ выполнено
	\begin{align*}
	\alpha_0 = 0,\\
	A_k = \sum_{i = 0}^{k}\alpha_i,\\
	A_k = L_{k}\alpha_k^2,
	\end{align*}
	где $L_k \leq 2L\,\,\,\forall k\geq0$.
	Тогда верно следующее $\forall k \geq 1$ неравенство:
	 \begin{align}
	 \label{lemma_maxmin_1_1}
	 A_k \geq \frac{(k+1)^2}{8L}.
	 \end{align}
\end{lemma}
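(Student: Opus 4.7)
My plan is to reduce the recursion to a telescoping inequality on $\sqrt{A_k}$. First, for any $k \geq 1$, I combine $\alpha_k = A_k - A_{k-1}$ (which follows from $\alpha_0 = 0$ and the definition of $A_k$) with the defining equation $A_k = L_k\alpha_k^2$ and the hypothesis $L_k \leq 2L$ to obtain the scalar inequality
\begin{equation*}
(A_k - A_{k-1})^2 \;=\; \frac{A_k}{L_k} \;\geq\; \frac{A_k}{2L}.
\end{equation*}

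Next I would factor the left-hand side via the identity $A_k - A_{k-1} = (\sqrt{A_k} - \sqrt{A_{k-1}})(\sqrt{A_k} + \sqrt{A_{k-1}})$, take square roots, and divide by $\sqrt{A_k}$. Since $\alpha_k \geq 0$ the sequence $A_k$ is nondecreasing, so $\sqrt{A_{k-1}} \leq \sqrt{A_k}$ and the factor $\sqrt{A_k} + \sqrt{A_{k-1}}$ is at most $2\sqrt{A_k}$. This yields the clean one-step estimate
\begin{equation*}
\sqrt{A_k} - \sqrt{A_{k-1}} \;\geq\; \frac{1}{2\sqrt{2L}}.
\end{equation*}
This is the step I expect to be the main (and really the only) obstacle: the constant $8L$ in the target bound arises precisely from the product of the $\sqrt{2L}$ coming from $L_k \leq 2L$ and the extra factor of $2$ coming from bounding the sum of square roots by $2\sqrt{A_k}$.

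The rest is routine. For the base case $k = 1$, since $\alpha_0 = 0$ we have $A_1 = \alpha_1$, so the identity $A_1 = L_1 \alpha_1^2$ forces $\alpha_1 = 1/L_1$ and hence $\sqrt{A_1} = 1/\sqrt{L_1} \geq 1/\sqrt{2L}$. Telescoping the one-step inequality for $i = 2, \ldots, k$ gives
\begin{equation*}
\sqrt{A_k} \;\geq\; \sqrt{A_1} + \frac{k-1}{2\sqrt{2L}} \;\geq\; \frac{1}{\sqrt{2L}} + \frac{k-1}{2\sqrt{2L}} \;=\; \frac{k+1}{2\sqrt{2L}},
\end{equation*}
and squaring produces the desired bound $A_k \geq (k+1)^2/(8L)$.
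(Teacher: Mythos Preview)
Your proof is correct and takes a genuinely different route from the paper. The paper argues by explicitly solving the quadratic $L_{k+1}\alpha_{k+1}^2-\alpha_{k+1}-A_k=0$ for the largest root, obtaining $\alpha_{k+1}=\frac{1}{2L_{k+1}}+\sqrt{\frac{1}{4L_{k+1}^2}+\frac{A_k}{L_{k+1}}}$, and then proves by induction the intermediate bound $\alpha_{k+1}\geq (k+2)/(4L)$, from which $A_{k+1}=A_k+\alpha_{k+1}\geq (k+2)^2/(8L)$ follows by a short arithmetic check. You bypass the quadratic formula entirely: from $(A_k-A_{k-1})^2=A_k/L_k\geq A_k/(2L)$ and the factorisation $A_k-A_{k-1}=(\sqrt{A_k}-\sqrt{A_{k-1}})(\sqrt{A_k}+\sqrt{A_{k-1}})\leq 2\sqrt{A_k}(\sqrt{A_k}-\sqrt{A_{k-1}})$ you extract the telescoping increment $\sqrt{A_k}-\sqrt{A_{k-1}}\geq 1/(2\sqrt{2L})$ directly. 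Your approach is the standard ``Nesterov estimate-sequence'' trick and is slightly cleaner, since it avoids carrying the explicit root and the auxiliary bound on $\alpha_k$; the paper's approach, on the other hand, delivers the extra information $\alpha_{k+1}\geq (k+2)/(4L)$ as a by-product, which can sometimes be useful elsewhere. Both proofs implicitly use that the positive (largest) root is taken so that $\alpha_k\geq 0$ and $A_k$ is nondecreasing---the paper states this inside its proof, and you invoke it when asserting $\sqrt{A_{k-1}}\leq\sqrt{A_k}$; it would be worth saying this explicitly since the lemma statement alone does not force it.
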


\begin{proof}
	Пусть $k = 1$.
	\begin{equation*}
	\alpha_1 = L_{1}\alpha_1^2 
	\end{equation*}
	\begin{equation*}
	A_1 = \alpha_1 = \frac{1}{L_1} \geq \frac{1}{2L}
	\end{equation*}
	Пусть $k \geq 2$, тогда
	\begin{equation*}
	L_{k+1}\alpha^2_{k+1} = A_{k+1}
	\end{equation*}
	\begin{equation*}
	L_{k+1}\alpha^2_{k+1} = A_{k} + \alpha_{k+1}
	\end{equation*}
	\begin{equation*}
	L_{k+1}\alpha^2_{k+1} - \alpha_{k+1} - A_{k} = 0 
	\end{equation*}
	Решая данное квадратное уравнение будем брать наибольший корень, поэтому
	\begin{equation*} 
	\alpha_{k+1} = \frac{1 + \sqrt{\uprule 1 + 4L_{k+1}A_{k}}}{2L_{k+1}}
	\end{equation*}
	По индукции, пусть неравенство (\ref{lemma_maxmin_1_1}) верно для $k$, тогда:
	\begin{gather*}
	\alpha_{k+1} = \frac{1}{2L_{k+1}} + \sqrt{\frac{1}{4L_{k+1}^2} + \frac{A_{k}}{L_{k+1}}} \geq 
	\frac{1}{2L_{k+1}} + \sqrt{\frac{A_{k}}{L_{k+1}}} \geq \\\geq
	\frac{1}{4L} + \frac{1}{\sqrt{2L}}\frac{k+1}{2\sqrt{2L}} =
	\frac{k+2}{4L}
	\end{gather*}
Последнее неравенство следует из $A_k \geq \frac{(k+1)^2}{8L}$, поэтому
	\begin{equation*}
	\alpha_{k+1} \geq \frac{k+2}{4L}
	\end{equation*}
и
	\begin{equation*}
	A_{k+1} = A_k + \alpha_{k+1} = \frac{(k+1)^2}{8L} + \frac{k+2}{4L} \geq \frac{(k+2)^2}{8L}
	\end{equation*}
\end{proof}

\begin{lemma}
	Пусть $\psi(x)$ выпуклая функция и 
	\begin{equation*}
	y = \argmin_{x \in Q}\{\psi(x) + V(x,z)\}
	\end{equation*}
	Тогда

	\begin{equation*}
	\psi(x) + V(x,z) \geq \psi(y) + V(y,z) + V(x,y) \,\,\, \forall x \in Q.
	\end{equation*}
	\label{lemma_maxmin_2}
\end{lemma}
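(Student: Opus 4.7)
План доказательства опирается на условие оптимальности первого порядка для точки $y$ и на тождество о трёх точках для расстояния Брэгмана.

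Сначала я бы вычислил градиент $V(\cdot, z)$ по первому аргументу: из определения $V(x, z) = d(x) - d(z) - \langle \nabla d(z), x - z\rangle$ сразу следует $\nabla_x V(x, z) = \nabla d(x) - \nabla d(z)$. Поскольку $y$ есть точка минимума выпуклой функции $\psi(x) + V(x, z)$ на выпуклом замкнутом множестве $Q$, по необходимому условию оптимальности найдётся субградиент $g \in \partial \psi(y)$, для которого
\[
\langle g + \nabla d(y) - \nabla d(z),\, x - y\rangle \geq 0 \quad \forall x \in Q.
\]

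Далее я бы проверил тождество о трёх точках (своеобразный аналог теоремы косинусов для $V$):
\[
V(x, z) - V(y, z) - V(x, y) = \langle \nabla d(y) - \nabla d(z),\, x - y\rangle.
\]
Оно получается прямой подстановкой определения $V$: слагаемые $d(x)$ и $d(y)$ сокращаются, и остаётся только разность линейных членов по $\nabla d(y)$ и $\nabla d(z)$.

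Наконец, по выпуклости $\psi$ имеем $\psi(x) \geq \psi(y) + \langle g, x - y\rangle$. Сложив это неравенство с тождеством о трёх точках и воспользовавшись условием оптимальности, я бы сразу получил требуемую оценку. Основная техническая тонкость здесь — аккуратное обращение с субдифференциалом в случае недифференцируемой $\psi$ (нужно именно выбрать тот $g$, который входит в вариационное неравенство); если же $\psi$ гладкая, всё доказательство сводится к элементарной алгебре с градиентами и единственному применению стандартного условия оптимальности для $y$.
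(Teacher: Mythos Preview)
Your proposal is correct and follows essentially the same route as the paper: both use the first-order optimality condition with a subgradient $g \in \partial\psi(y)$, the three-point identity for the Bregman divergence, and the subgradient inequality for $\psi$. The paper packages the three-point identity as the computation $\langle \nabla d(y) - \nabla d(z), y - x\rangle = V(y,z) + V(x,y) - V(x,z)$, which is just your identity rearranged.
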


\begin{proof}
	
	По критерию оптимальности:
	\begin{gather*}
		\exists g \in \partial\psi(y), \,\,\, \langle g + \nabla_y V(y, z), x - y  \rangle \geq 0 \,\,\, \forall x \in Q
	\end{gather*}
	Тогда неравенство
	\begin{gather*}
		\psi(x) - \psi(y) \geq \langle g, x - y  \rangle \geq \langle \nabla_y V(y, z), y - x  \rangle
	\end{gather*}
и равенство
	\begin{gather*}
	\langle \nabla_y V(y, z), y - x  \rangle = \langle \nabla d(y) - \nabla d(z), y - x  \rangle = d(y) - d(z) - \langle \nabla d(z), y - z  \rangle +\\ + d(x) - d(y) - \langle \nabla d(y), x - y  \rangle - d(x) + d(z) + \langle \nabla d(z), x - z  \rangle = \\=
	V(y,z) + V(x,y) - V(x,z)
	\end{gather*}
завершают доказательство.
	
\end{proof}
\begin{lemma}
	$\forall x \in Q$ выполнено
	\begin{equation*}
		A_{k+1} f(x_{k+1}) - A_{k} f(x_{k}) + V(x, u_{k+1}) - V(x, u_{k}) \leq \alpha_{k+1}f(x).
	\end{equation*}
	\label{lemma_maxmin_3}
\end{lemma}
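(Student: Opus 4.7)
Мой план состоит в комбинации двух согласованных оценок, которые при сложении дают нужное неравенство за счёт взаимного сокращения вспомогательных слагаемых.

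Первым шагом я введу выпуклую миноранту функции $f$, отвечающую итерации $k+1$:
\begin{equation*}
\ell_{k+1}(z) := \max_{j = 1,\dots,M}[f_j(y_{k+1}) + \langle \nabla f_j(y_{k+1}), z - y_{k+1} \rangle] + h(z).
\end{equation*}
В силу выпуклости каждого $f_j$ имеем $\ell_{k+1}(z) \leq f(z)$ на $Q$, а сама функция $\ell_{k+1}$ выпукла как максимум аффинных плюс выпуклое $h$. По определению $\phi_{k+1}$ формула (\ref{equmir2}) означает, что $u_{k+1} = \argmin_{x \in Q}\{V(x, u_k) + \alpha_{k+1}\ell_{k+1}(x)\}$, что позволяет применить Лемму \ref{lemma_maxmin_2} с $\psi = \alpha_{k+1}\ell_{k+1}$ и $z = u_k$. Перегруппировав слагаемые и заменив $\ell_{k+1}(x)$ на мажоранту $f(x)$, получаю
\begin{equation*}
V(x, u_{k+1}) - V(x, u_k) \leq \alpha_{k+1}f(x) - \alpha_{k+1}\ell_{k+1}(u_{k+1}) - V(u_{k+1}, u_k).
\end{equation*}

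Вторым шагом я оценю разность $A_{k+1}f(x_{k+1}) - A_kf(x_k)$ сверху. Условие выхода (\ref{exitL}) даёт $f(x_{k+1}) \leq \ell_{k+1}(x_{k+1}) + \tfrac{L_{k+1}}{2}\norm{x_{k+1} - y_{k+1}}^2$. Прямое вычисление из (\ref{eqymir2}) и (\ref{eqxmir2}) показывает, что $x_{k+1} - y_{k+1} = \tfrac{\alpha_{k+1}}{A_{k+1}}(u_{k+1} - u_k)$, и благодаря выбору $A_{k+1} = L_{k+1}\alpha_{k+1}^2$ квадратичный остаточный член превращается ровно в
\begin{equation*}
\frac{A_{k+1}L_{k+1}}{2}\norm{x_{k+1} - y_{k+1}}^2 = \frac{1}{2}\norm{u_{k+1} - u_k}^2 \leq V(u_{k+1}, u_k).
\end{equation*}
Применяя далее выпуклость $\ell_{k+1}$ к представлению $x_{k+1} = \tfrac{\alpha_{k+1}}{A_{k+1}}u_{k+1} + \tfrac{A_k}{A_{k+1}}x_k$ и пользуясь $\ell_{k+1}(x_k) \leq f(x_k)$, прихожу к
\begin{equation*}
A_{k+1}f(x_{k+1}) \leq A_k f(x_k) + \alpha_{k+1}\ell_{k+1}(u_{k+1}) + V(u_{k+1}, u_k).
\end{equation*}

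Сложив две эти оценки, увижу, что слагаемые $\alpha_{k+1}\ell_{k+1}(u_{k+1})$ и $V(u_{k+1}, u_k)$ взаимно сокращаются, и получится требуемое неравенство. Главная техническая тонкость — именно это сокращение: квадратичный член из условия липшицевости в точности совпадает с $\tfrac{1}{2}\norm{u_{k+1} - u_k}^2$ благодаря уравнению $A_{k+1} = L_{k+1}\alpha_{k+1}^2$, определяющему $\alpha_{k+1}$, и благодаря тому, что точки $y_{k+1}$ и $x_{k+1}$ отличаются лишь на прокс-сдвиг $u_{k+1} - u_k$, характерный для зеркального шага. Использование общей миноранты $\ell_{k+1}$ (вместо отдельных $f_j$) необходимо, чтобы согласовать минимаксную структуру задачи с одношаговым зеркальным обновлением.
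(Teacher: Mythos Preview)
Your proposal is correct and follows essentially the same route as the paper: both proofs use the exit condition (\ref{exitL}), the identity $x_{k+1}-y_{k+1}=\tfrac{\alpha_{k+1}}{A_{k+1}}(u_{k+1}-u_k)$ together with $A_{k+1}=L_{k+1}\alpha_{k+1}^2$, convexity of the minorant $\ell_{k+1}$ at the convex combination $x_{k+1}$, and Lemma~\ref{lemma_maxmin_2} with $\psi=\alpha_{k+1}\ell_{k+1}$. The only difference is organizational---you split the argument into two additive estimates that cancel, while the paper strings the same inequalities into one long chain.
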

\begin{proof}
	Введем обозначение: $l^j_{f}(x;y) = f_{j}(y) + \langle \nabla f_{j}(y), x - y \rangle$.
	\begin{gather*}
	f(x_{k+1}) \leq_{{\tiny \circled{1}}} \max_{j = 1,\dots,M}\{l^j_{f}(x_{k+1};y_{k+1})\}  + \frac{L_{k+1}}{2}\norm{x_{k+1} - y_{k+1}}^2 + h(x_{k+1}) = \\=
	\max_{j = 1,\dots,M}\{l^j_{f}(\frac{\alpha_{k+1}u_{k+1} + A_k x_k}{A_{k+1}};y_{k+1})\}  + \frac{L_{k+1}}{2}\norm{\frac{\alpha_{k+1}u_{k+1} + A_k x_k}{A_{k+1}} - y_{k+1}}^2 + \\+ h(\frac{\alpha_{k+1}u_{k+1} + A_k x_k}{A_{k+1}}) \leq \\\leq
	\max_{j = 1,\dots,M}\{f_{j}(y_{k+1}) + 
	\frac{\alpha_{k+1}}{A_{k+1}}\langle \nabla f_{j}(y_{k+1}), u_{k+1} - y_{k+1} \rangle\ + \\+
	 \frac{A_k}{A_{k+1}}\langle \nabla f_{j}(y_{k+1}), x_k - y_{k+1} \rangle\}  + \frac{L_{k+1} \alpha^2_{k+1}}{2 A^2_{k+1}}\norm{u_{k+1} - u_k}^2 +\\
	 + \frac{\alpha_{k+1}}{A_{k+1}}h(u_{k+1}) + \frac{A_k}{A_{k+1}}h(x_k) \leq \\\leq
	 \frac{A_k}{A_{k+1}}(\max_{j = 1,\dots,M}\{f_{j}(y_{k+1}) + \langle \nabla f_{j}(y_{k+1}), x_k - y_{k+1} \rangle\} + h(x_k))
	 + \\+
	 \frac{\alpha_{k+1}}{A_{k+1}}(\max_{j = 1,\dots,M}\{f_{j}(y_{k+1}) + 
	 \langle \nabla f_{j}(y_{k+1}), u_{k+1} - y_{k+1} \rangle\} + h(u_{k+1})) +\\
	   + \frac{L_{k+1} \alpha^2_{k+1}}{2 A^2_{k+1}}\norm{u_{k+1} - u_k}^2 =_{{\tiny \circled{2}}} \\ =
	 \frac{A_k}{A_{k+1}}(\max_{j = 1,\dots,M}\{l^j_{f}(x_k;y_{k+1})\} + h(x_k))
	 + \\+
	 \frac{\alpha_{k+1}}{A_{k+1}}(\max_{j = 1,\dots,M}\{l^j_{f}(u_{k+1};y_{k+1})\}
	 + \frac{1}{2 \alpha_{k+1}}\norm{u_{k+1} - u_k}^2 + h(u_{k+1})) \leq \\\leq
	 \frac{A_k}{A_{k+1}}(\max_{j = 1,\dots,M}\{l^j_{f}(x_k;y_{k+1})\} + h(x_k))
	 + \\+
	 \frac{\alpha_{k+1}}{A_{k+1}}(\max_{j = 1,\dots,M}\{l^j_{f}(u_{k+1};y_{k+1})\}
	 + \frac{1}{\alpha_{k+1}}V(u_{k+1}, u_k) + h(u_{k+1})) \leq_{{\tiny \circled{3}}} \\
	 \leq \frac{A_k}{A_{k+1}} f(x_k) + \\+
	 \frac{\alpha_{k+1}}{A_{k+1}}(\max_{j = 1,\dots,M}\{l^j_{f}(x;y_{k+1})\} + h(x)
	 + \frac{1}{\alpha_{k+1}}V(x, u_k) - \frac{1}{\alpha_{k+1}}V(x, u_{k+1})) \leq_{{\tiny \circled{4}}} \\\leq
	 \frac{A_k}{A_{k+1}} f(x_k) + 
	 \frac{\alpha_{k+1}}{A_{k+1}} f(x)
	 + \frac{1}{A_{k+1}}V(x, u_k) - \frac{1}{A_{k+1}}V(x, u_{k+1}))
	\end{gather*}
\end{proof}

{\small \circled{1}} - из условия (\ref{exitL})

{\small \circled{2}} - из $A_k = L_{k}\alpha^2_k$

{\small \circled{3}} - из леммы \ref{lemma_maxmin_2} с 
$\psi(x) = \alpha_{k+1}(\max_{j = 1,\dots,M}\{f_{j}(y_{k+1}) + 
\langle \nabla f_{j}(y_{k+1}), x - y_{k+1} \rangle\} + h(x))$ и выпуклость $f_j(x)\,\, \forall j$

{\small \circled{4}} - выпуклость $f_j(x)\,\, \forall j$

\begin{theorem}
	Пусть $x_*$ - решение задачи (\ref{minmax}).
	\begin{equation*}
	f(x_N) - f(x_*) \leq \frac{8LR^2}{(N+1)^2}
	\end{equation*}
\end{theorem}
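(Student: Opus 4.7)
The plan is to combine the two preceding lemmas by a standard telescoping argument. First I would apply Lemma \ref{lemma_maxmin_3} at the point $x = x_*$ for each index $k = 0, 1, \dots, N-1$, obtaining the chain of inequalities
\begin{equation*}
A_{k+1} f(x_{k+1}) - A_k f(x_k) + V(x_*, u_{k+1}) - V(x_*, u_k) \leq \alpha_{k+1} f(x_*).
\end{equation*}
Summing these from $k=0$ to $k=N-1$ collapses the left-hand side into $A_N f(x_N) - A_0 f(x_0) + V(x_*, u_N) - V(x_*, u_0)$, while the right-hand side becomes $\big(\sum_{k=1}^{N}\alpha_k\big) f(x_*) = (A_N - A_0) f(x_*)$.

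Next I would use the initializations of the algorithm: $A_0 = \alpha_0 = 0$ and $u_0 = x_0$. This eliminates the $A_0 f(x_0)$ term and turns $V(x_*, u_0)$ into $V(x_*, x_0) \leq R^2$. Discarding the nonnegative term $V(x_*, u_N)$ on the left gives
\begin{equation*}
A_N\bigl(f(x_N) - f(x_*)\bigr) \leq V(x_*, x_0) \leq R^2.
\end{equation*}

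Finally I would invoke Lemma \ref{lemma_maxmin_1}, which gives $A_N \geq \frac{(N+1)^2}{8L}$, and divide to conclude $f(x_N) - f(x_*) \leq \frac{8 L R^2}{(N+1)^2}$. I do not expect any real obstacle: the nontrivial work has already been absorbed into Lemma \ref{lemma_maxmin_3} (the per-step recursion, which handles the adaptive choice of $L_{k+1}$ via the exit condition \eqref{exitL}) and Lemma \ref{lemma_maxmin_1} (the lower bound on $A_N$ that exploits $L_k \leq 2L$). The only subtlety to be careful about is that the telescoping requires $\alpha_0 = 0$ and $u_0 = x_0$, so that the boundary terms vanish cleanly; these are exactly built into the 0-th step of the method.
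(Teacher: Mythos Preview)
Your proposal is correct and follows essentially the same approach as the paper: telescope Lemma~\ref{lemma_maxmin_3}, use $A_0=0$ and $u_0=x_0$ to kill the boundary terms, drop $V(x_*,u_N)\ge 0$, and finish with the lower bound on $A_N$ from Lemma~\ref{lemma_maxmin_1}. The only cosmetic difference is that the paper sums with a generic $x$ and then specializes to $x=x_*$, whereas you specialize first; the argument is otherwise identical.
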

\begin{proof}
	
	Просуммируем неравенство из леммы \ref{lemma_maxmin_3} по $k = 0, ..., N - 1$
	
	\begin{gather*}
		A_{N} f(x_N) - A_{0} f(x_0) + V(x, u_N) - V(x, u_0) \leq (A_N - A_0)f(x) \\
		A_{N} f(x_N) + V(x, u_N) - V(x, u_0) \leq A_Nf(u)
	\end{gather*}
	
	Возьмем $x = x_*$.
	\begin{gather*}
		A_{N} (f(x_N) - f_*) \leq R^2 \\
		f(x_N) - f_* \leq \frac{R^2}{A_{N}} \leq_{{\tiny \circled{1}}} \frac{8LR^2}{(N+1)^2}
	\end{gather*}
	
	{\small \circled{1}} - из Леммы \ref{lemma_maxmin_1}.
	
\end{proof}

\begin{remark}
\leavevmode

С точностью до константы скорость сходимости никак не изменилась по сравнению с Теоремой \ref{sec1:mainTheorem}. Но, во-первых, данный метод удобен тем, что не надо знать истинную константу Липшица, так как во время работы алгоритма она подбирается автоматически. Во-вторых, при выборе шага $\alpha_k$ мы учитываем некоторую "локальную"\, константу Липшица $L_k$, которая на практике может по ходу работы алгоритма постепенно уменьшаться, из-за чего метод будет на практике работать быстрее.
\end{remark}

\begin{remark}
\leavevmode

Можно заметить, что оптимизационная задача вообще говоря является негладкой, но из-за того, что мы используем структуру задачи, нам удалось получить быструю оценку скорости сходимости градиентного метода.

\end{remark}

\begin{remark}
\leavevmode

Пусть у нас имеется задача с функциональными ограничениями
\begin{gather*}
	f(x) \rightarrow \min_{x \in Q} \\
	\text{при } g_j(x) \leq 0 \,\,\, \forall j = 1,\dots,k
\end{gather*}

Предположим, что мы знаем $f_*$. Тогда можно записать эквивалентную задачу \cite{polyak1983vvedenie}:
\begin{gather*}
	\max\{f(x) - f_*, g_1(x), ..., g_K(x)\} \rightarrow \min_{x \in Q}
\end{gather*}
Для ее решения можно использовать адаптивный алгоритм ЗМТ для задачи минимакса.
\end{remark}

\begin{remark}
\leavevmode

На каждом шаге алгоритма решаем вспомогательную задачу:
	\begin{gather*}
	\phi_{k+1}(x) = V(x, u_k) + \alpha_{k+1}(\max_{j = 1,\dots,M}[f_{j}(y_{k+1}) + \langle \nabla f_{j}(y_{k+1}), x - y_{k+1} \rangle] + h(x))\\
	u_{k+1} = \argmin_{x \in Q}\phi_{k+1}(x)
	\end{gather*}
Пусть $V(x,y) = \frac{1}{2}\norm{x - y}^2_2$, $Q = R^n$ и $h(x) = 0$. Тогда вспомогательную задачу можно свести к задаче квадратичного программирования \cite{nesterov2010methods}:
	\begin{gather*}
	\argmin_{x, t}\{t + \frac{1}{2}\norm{x - u_k}^2_2\} \\
	f_{j}(y_{k+1}) + \langle \nabla f_{j}(y_{k+1}), x - y_{k+1} \rangle \leq t \,\,\, \forall j
	\end{gather*}
	
	Если количество $f_j(x)$ мало, и размерность пространства не очень большая, то задачу можно решить быстро методом внутренней точки.
\end{remark}

\section{Зеркальный метод треугольника с неточным $(\delta, L)$-оракулом} \label{sec:mmtDL}

В данном разделе будем решать следующую задачу:
\begin{align}
\label{mainTask3}
F(x) \stackrel{def}{=} f(x) + h(x) \rightarrow \min_{x \in Q}
\end{align}

Условия на $f(x)$ такие же, как и в разделе \ref{sec:mmt}. Будем предполагать, что $x_*$ решение (\ref{mainTask3}) и $V(x_*, x_0) \leq R^2$. В отличие от раздела \ref{sec:mmt}, нам будем доступен только $(\delta, L)$-оракул \cite{devolder2013exactness}. $h(x)$ - выпуклая функция на $Q$.

\begin{definition}
\leavevmode

$(\delta, L)$-оракулом будем называть оракул, который на запрашиваемую точку $y$ дает пару $(f_\delta(y), \nabla f_\delta(y))$ такую, что
\begin{gather}
\label{exitLDLOrig}
0 \leq f(x) - f_\delta(y) - \langle\nabla f_\delta(y), x - y\rangle \leq \frac{L}{2}\norm{x - y}^2 + \delta \,\,\, \forall x \in Q.
\end{gather}

\end{definition}

\begin{corollary}

Возьмем $x = y$ в (\ref{exitLDLOrig}), тогда
\begin{gather}
\label{exitLDLOrig2}
f_\delta(y) \leq f(y) \leq f_\delta(y) + \delta \,\,\,\forall y \in Q.
\end{gather}
\end{corollary}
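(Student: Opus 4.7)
The plan is to simply substitute $x = y$ into the defining two-sided inequality (\ref{exitLDLOrig}) of the $(\delta, L)$-oracle and observe that two of the three nontrivial terms collapse to zero. Concretely, with $x = y$ we have $x - y = 0$, so both the linear term $\langle \nabla f_\delta(y), x - y \rangle$ and the quadratic term $\frac{L}{2}\norm{x-y}^2$ vanish. The inequality (\ref{exitLDLOrig}) then reduces to
\begin{equation*}
0 \leq f(y) - f_\delta(y) \leq \delta,
\end{equation*}
which rearranges directly to $f_\delta(y) \leq f(y) \leq f_\delta(y) + \delta$ for every $y \in Q$.

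There is genuinely no obstacle here: the corollary is an immediate specialization of the oracle definition, and no auxiliary lemma, convexity argument, or estimate on $V$ or $L$ needs to be invoked. The only thing worth flagging is that the substitution $x = y$ is legitimate because (\ref{exitLDLOrig}) is required to hold for all $x \in Q$, and $y \in Q$ by assumption (the oracle is queried at points of $Q$). Consequently the proof should be written out in a single line.
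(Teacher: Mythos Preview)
Your proposal is correct and matches exactly what the paper does: the corollary is stated with the hint ``take $x=y$'' and no further argument, since the substitution immediately kills the linear and quadratic terms in (\ref{exitLDLOrig}) and yields $0 \leq f(y) - f_\delta(y) \leq \delta$. There is nothing to add.
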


Рассмотрим алгоритм зеркального метода треугольника с неточным $(\delta, L)$-оракулом.

\begin{mdframed}
\textbf{Дано:} $x_0$ - начальная точка, $N$ - количество шагов, $\delta$ и $L_0$ некоторая константа, которая удовлетворяет условию: $L_0 \leq L$.

\textbf{0 - шаг:}
\begin{gather}
y_0 = u_0 = x_0 \\
L_1 = \frac{L_0}{2}\\
\alpha_0 = 0\\
A_0 = \alpha_0
\end{gather}

\textbf{$\boldsymbol{k+1}$ - шаг:}
\begin{center}
	Найти наибольший корень $\alpha_{k+1} : A_k + \alpha_{k+1} = L_{k+1}\alpha^2_{k+1}$
\end{center}
\begin{gather}
A_{k+1} = A_k + \alpha_{k+1}\\
y_{k+1} = \frac{\alpha_{k+1}u_k + A_k x_k}{A_{k+1}} \label{eqymir2DL} \\
\phi_{k+1}(x) = V(x, u_k) + \alpha_{k+1}(f_\delta(y_{k+1}) + \langle \nabla f_\delta(y_{k+1}), x - y_{k+1} \rangle + h(x))\\
u_{k+1} = \argmin_{x \in Q}\phi_{k+1}(x) \label{equmir2DL}\\
x_{k+1} = \frac{\alpha_{k+1}u_{k+1} + A_k x_k}{A_{k+1}} \label{eqxmir2DL}
\end{gather}
Если выполнено условие
\begin{equation}
\begin{gathered}
f_\delta(x_{k+1}) \leq f_\delta(y_{k+1}) + \langle \nabla f_\delta(y_{k+1}), x_{k+1} - y_{k+1} \rangle\ +\\  + \frac{L_{k+1}}{2}\norm{x_{k+1} - y_{k+1}}^2 + \delta,
\label{exitLDL}
\end{gathered}
\end{equation}

то
\begin{gather}
L_{k+2} = \frac{L_{k+1}}{2}
\end{gather}
и перейти к следующему шагу, иначе
\begin{gather}
L_{k+1} = 2L_{k+1}
\end{gather}
и повторить текущий шаг.
\end{mdframed}

\begin{remark}
\leavevmode

Все свойства из Замечания \ref{remark_maxmin} сохраняются, но надо отметить, что из (\ref{exitLDLOrig}) и (\ref{exitLDLOrig2}) следует (\ref{exitLDL}) с $L_{k+1} \geq L$. Данное замечание гарантирует, что через конечное количество внутренних циклов при подборе $L_k$ будет выполнено (\ref{exitLDL}).

\end{remark}

Докажем основную лемму, которая практически полностью повторяет лемму \ref{lemma_maxmin_3}.

\begin{lemma}
	$\forall x \in Q$ выполнено
	\begin{equation*}
		A_{k+1} F(x_{k+1}) - A_{k} F(x_{k}) + V(x, u_{k+1}) - V(x, u_{k}) \leq \alpha_{k+1}F(x) + 2\delta A_{k+1}
	\end{equation*}
	\label{lemma_maxmin_3DL}
\end{lemma}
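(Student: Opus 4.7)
The plan is to mirror the proof of Lemma \ref{lemma_maxmin_3} almost verbatim, but to carefully account for the $\delta$-terms that the inexact oracle introduces at two different places. First, I would pass from $F(x_{k+1})$ to oracle quantities: by (\ref{exitLDLOrig2}) we have $f(x_{k+1})\le f_\delta(x_{k+1})+\delta$, and then the descent-type condition (\ref{exitLDL}) gives
\begin{equation*}
F(x_{k+1})\le f_\delta(y_{k+1})+\langle\nabla f_\delta(y_{k+1}),x_{k+1}-y_{k+1}\rangle+\frac{L_{k+1}}{2}\norm{x_{k+1}-y_{k+1}}^2+h(x_{k+1})+2\delta.
\end{equation*}
This is where both copies of $\delta$ enter, and matching them with the factor $A_{k+1}$ at the end is the bookkeeping that gives the $2\delta A_{k+1}$ in the statement.

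Next I would substitute $x_{k+1}=\frac{\alpha_{k+1}u_{k+1}+A_kx_k}{A_{k+1}}$ from (\ref{eqxmir2DL}), so that $x_{k+1}-y_{k+1}=\frac{\alpha_{k+1}}{A_{k+1}}(u_{k+1}-u_k)$ by (\ref{eqymir2DL}). Splitting the linear term by linearity and $h(x_{k+1})$ by convexity of $h$ yields a convex combination of a ``$u_{k+1}$-part'' and an ``$x_k$-part'', each weighted by $\alpha_{k+1}/A_{k+1}$ and $A_k/A_{k+1}$ respectively. The quadratic term becomes $\frac{L_{k+1}\alpha_{k+1}^2}{2A_{k+1}^2}\norm{u_{k+1}-u_k}^2$, which by the defining relation $A_{k+1}=L_{k+1}\alpha_{k+1}^2$ simplifies to $\frac{\alpha_{k+1}}{A_{k+1}}\cdot\frac{1}{2\alpha_{k+1}}\norm{u_{k+1}-u_k}^2$, and by 1-strong convexity of $d$ this is at most $\frac{\alpha_{k+1}}{A_{k+1}}\cdot\frac{1}{\alpha_{k+1}}V(u_{k+1},u_k)$.

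Now I would apply Lemma \ref{lemma_maxmin_2} with $\psi(x)=\alpha_{k+1}(f_\delta(y_{k+1})+\langle\nabla f_\delta(y_{k+1}),x-y_{k+1}\rangle+h(x))$, so that $u_{k+1}=\argmin_{x\in Q}\{\psi(x)+V(x,u_k)\}$ by (\ref{equmir2DL}). This gives
\begin{equation*}
\psi(u_{k+1})+V(u_{k+1},u_k)\le\psi(x)+V(x,u_k)-V(x,u_{k+1}),\qquad\forall x\in Q,
\end{equation*}
which bounds the $u_{k+1}$-part by an arbitrary $x$, at the cost of the two Bregman terms. The final ingredient is the left-hand inequality of the oracle (\ref{exitLDLOrig}), $f_\delta(y_{k+1})+\langle\nabla f_\delta(y_{k+1}),z-y_{k+1}\rangle\le f(z)$, applied both with $z=x_k$ (to convert the ``$x_k$-part'' into $F(x_k)$) and with $z=x$ (to convert the right-hand side of the Lemma \ref{lemma_maxmin_2} bound into $F(x)$).

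Assembling these pieces produces
\begin{equation*}
F(x_{k+1})\le\frac{A_k}{A_{k+1}}F(x_k)+\frac{\alpha_{k+1}}{A_{k+1}}F(x)+\frac{1}{A_{k+1}}V(x,u_k)-\frac{1}{A_{k+1}}V(x,u_{k+1})+2\delta,
\end{equation*}
and multiplying through by $A_{k+1}$ and rearranging gives exactly the claim. The only real subtlety compared with Lemma \ref{lemma_maxmin_3} is that here one must use \emph{both} sides of the $(\delta,L)$-oracle definition: the upper bound at $x_{k+1}$ (contained in (\ref{exitLDL}) together with (\ref{exitLDLOrig2})) to start, and the lower bound (the $0\le\ldots$ part of (\ref{exitLDLOrig})) twice at the end to replace oracle linearizations by true function values; keeping the two $\delta$ contributions distinct so they combine into $2\delta\cdot A_{k+1}$ after the final multiplication is the main thing to watch.
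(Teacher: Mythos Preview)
Your proposal is correct and follows essentially the same route as the paper's own proof: start from (\ref{exitLDLOrig2}) plus (\ref{exitLDL}) to get the $2\delta$ upper bound, substitute the expressions for $x_{k+1}$ and $y_{k+1}$, use $A_{k+1}=L_{k+1}\alpha_{k+1}^2$ and $\tfrac{1}{2}\norm{\cdot}^2\le V(\cdot,\cdot)$, apply Lemma~\ref{lemma_maxmin_2}, and finish with the left inequality of (\ref{exitLDLOrig}) at $x_k$ and at $x$. Your remark that both sides of the oracle inequality are needed, and your tracking of the two $\delta$'s, match the paper's annotations exactly.
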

\begin{proof}
	Введем обозначение: $l_f^\delta(x;y) = f_\delta(y) + \langle \nabla f_\delta(y), x - y \rangle$.
	\begin{gather*}
	F(x_{k+1}) \leq_{{\tiny \circled{1}}} l_f^\delta(x_{k+1};y_{k+1})  + \frac{L_{k+1}}{2}\norm{x_{k+1} - y_{k+1}}^2 + h(x_{k+1}) + 2\delta = \\=
	l_f^\delta(\frac{\alpha_{k+1}u_{k+1} + A_k x_k}{A_{k+1}};y_{k+1})  + \frac{L_{k+1}}{2}\norm{\frac{\alpha_{k+1}u_{k+1} + A_k x_k}{A_{k+1}} - y_{k+1}}^2 + \\+ h(\frac{\alpha_{k+1}u_{k+1} + A_k x_k}{A_{k+1}}) + 2\delta\leq \\\leq
	f_\delta(y_{k+1}) + 
	\frac{\alpha_{k+1}}{A_{k+1}}\langle \nabla f_\delta(y_{k+1}), u_{k+1} - y_{k+1} \rangle\ + \\+
	 \frac{A_k}{A_{k+1}}\langle \nabla f_\delta(y_{k+1}), x_k - y_{k+1} \rangle  + \frac{L_{k+1} \alpha^2_{k+1}}{2 A^2_{k+1}}\norm{u_{k+1} - u_k}^2 +\\
	 + \frac{\alpha_{k+1}}{A_{k+1}}h(u_{k+1}) + \frac{A_k}{A_{k+1}}h(x_k) + 2\delta= \\=
	 \frac{A_k}{A_{k+1}}(f_\delta(y_{k+1}) + \langle \nabla f_\delta(y_{k+1}), x_k - y_{k+1} \rangle + h(x_k))
	 + \\+
	 \frac{\alpha_{k+1}}{A_{k+1}}(f_\delta(y_{k+1}) + 
	 \langle \nabla f_\delta(y_{k+1}), u_{k+1} - y_{k+1} \rangle + h(u_{k+1}))+ \\
	   + \frac{L_{k+1} \alpha^2_{k+1}}{2 A^2_{k+1}}\norm{u_{k+1} - u_k}^2 + 2\delta=_{{\tiny \circled{2}}} \\ =
	 \frac{A_k}{A_{k+1}}(l_f^\delta(x_k;y_{k+1}) + h(x_k))
	 + \\+
	 \frac{\alpha_{k+1}}{A_{k+1}}(l_f^\delta(u_{k+1};y_{k+1})
	 + \frac{1}{2 \alpha_{k+1}}\norm{u_{k+1} - u_k}^2 + h(u_{k+1})) + 2\delta\leq \\\leq
	 \frac{A_k}{A_{k+1}}(l_f^\delta(x_k;y_{k+1}) + h(x_k))
	 + \\+
	 \frac{\alpha_{k+1}}{A_{k+1}}(l_f^\delta(u_{k+1};y_{k+1})
	 + \frac{1}{\alpha_{k+1}}V(u_{k+1}, u_k) + h(u_{k+1})) + 2\delta\leq_{{\tiny \circled{3}}} \\\leq
	 \frac{A_k}{A_{k+1}} F(x_k) + \\+
	 \frac{\alpha_{k+1}}{A_{k+1}}(l_f^\delta(x;y_{k+1}) + h(x)
	 + \frac{1}{\alpha_{k+1}}V(x, u_k) - \frac{1}{\alpha_{k+1}}V(x, u_{k+1})) + 2\delta \leq_{{\tiny \circled{4}}} \\\leq
	 \frac{A_k}{A_{k+1}} F(x_k) + 
	 \frac{\alpha_{k+1}}{A_{k+1}} F(x)
	 + \frac{1}{A_{k+1}}V(x, u_k) - \frac{1}{A_{k+1}}V(x, u_{k+1})) + 2\delta
	\end{gather*}
\end{proof}

{\small \circled{1}} - из условия (\ref{exitLDL}) и (\ref{exitLDLOrig2}) 

{\small \circled{2}} - из $A_k = L_{k}\alpha^2_k$

{\small \circled{3}} - из леммы \ref{lemma_maxmin_2} с 
$\psi(x) = \alpha_{k+1}(f_\delta(y_{k+1}) + 
\langle \nabla f_\delta(y_{k+1}), x - y_{k+1} \rangle + h(x))$ и левая часть (\ref{exitLDLOrig})

{\small \circled{4}} - левая часть (\ref{exitLDLOrig})

\begin{theorem}
	\label{mainTheoremDL}
	Пусть $x_*$ - решения задачи (\ref{mainTask3}), тогда
	\begin{equation*}
	F(x_N) - F(x_*) \leq \frac{8LR^2}{(N+1)^2}+ 2N\delta
	\end{equation*}
\end{theorem}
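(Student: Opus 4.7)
План состоит в том, чтобы воспроизвести схему доказательства предыдущей теоремы (для задачи минимакса), аккуратно проследив, как лишнее слагаемое $2\delta A_{k+1}$ в Лемме \ref{lemma_maxmin_3DL} превращается в искомое накопление погрешности $2N\delta$.

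Сначала просуммирую неравенство из Леммы \ref{lemma_maxmin_3DL} по $k = 0, \ldots, N-1$. Слева телескопируются члены $A_{k+1} F(x_{k+1}) - A_k F(x_k)$ и $V(x, u_{k+1}) - V(x, u_k)$, и, с учётом $A_0 = 0$, получится неравенство вида $A_N F(x_N) + V(x, u_N) - V(x, u_0) \leq A_N F(x) + 2\delta \sum_{k=0}^{N-1} A_{k+1}$. Затем подставлю $x = x_*$, отброшу неотрицательное слагаемое $V(x_*, u_N)$, воспользуюсь $V(x_*, u_0) \leq R^2$ и разделю обе части на $A_N$.

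Для оценки первого слагаемого $R^2/A_N$ применю Лемму \ref{lemma_maxmin_1}. Её условие $L_k \leq 2L$ обеспечивается упомянутым перед леммой \ref{lemma_maxmin_3DL} аналогом Замечания \ref{remark_maxmin}: внутренний цикл по $L_k$ останавливается до того, как $L_k$ превысит $2L$, поскольку из (\ref{exitLDLOrig}) и (\ref{exitLDLOrig2}) условие выхода (\ref{exitLDL}) автоматически выполняется при $L_{k+1} \geq L$. Отсюда $A_N \geq (N+1)^2/(8L)$, и первое слагаемое оценивается сверху как $8LR^2/(N+1)^2$.

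Главным техническим моментом останется обработка суммы $2\delta\sum_{k=0}^{N-1}A_{k+1}/A_N$. Ключевое наблюдение --- это тривиальное мажорирование $A_{k+1} \leq A_N$ для всех $k \leq N-1$, дающее $\sum_{k=0}^{N-1} A_{k+1} \leq N A_N$, так что второе слагаемое не превосходит $2N\delta$. Такое линейное по $N$ накопление погрешности --- характерное свойство методов с неточным $(\delta, L)$-оракулом; склейка двух оценок завершит доказательство.
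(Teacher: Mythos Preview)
Ваш план корректен и полностью совпадает с доказательством в статье: телескопическое суммирование Леммы~\ref{lemma_maxmin_3DL}, подстановка $x=x_*$, отбрасывание $V(x_*,u_N)\ge 0$, грубая оценка $\sum_{k=0}^{N-1}A_{k+1}\le N A_N$ и применение Леммы~\ref{lemma_maxmin_1}. Никаких отличий по существу нет.
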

\begin{proof}
	
	Просуммируем нер-во из леммы \ref{lemma_maxmin_3DL} по $k = 0, ..., N - 1$
	
	\begin{gather*}
		A_{N} F(x_N) - A_{0} F(x_0) + V(x, u_N) - V(x, u_0) \leq (A_N - A_0)F(x) + 2\delta\sum_{k = 0}^{N-1}A_{k+1} \\
		A_{N} F(x_N) + V(x, u_N) - V(x, u_0) \leq A_NF(x) + 2\delta\sum_{k = 0}^{N-1}A_{k+1}
	\end{gather*}
	
	Возьмем $x = x_*$ и используем, что $\forall k = 1,\dots,N$ $A_{k} \leq A_N$, так как по определению $A_{k}$ - неубывающая последовательность.
	
	\begin{gather*}
		A_{N} (F(x_N) - F_*) \leq R^2 + 2NA_N\delta \\
		F(x_N) - F_* \leq \frac{R^2}{A_{N}} + 2N\delta \leq_{{\tiny \circled{1}}} \frac{8LR^2}{(N+1)^2} + 2N\delta
	\end{gather*}
		
	{\small \circled{1}} - из Леммы \ref{lemma_maxmin_1}.
	
\end{proof}

\begin{remark}
Пусть $f(y)$ - $\alpha$-квази-выпуклая функция, то есть $f(y)$ - невыпуклая функция, но верно свойство $\langle\nabla f(y), y - x_*\rangle \geq \alpha(f(y) - f(x_*))$  $\forall y \in Q$. В Лемме \ref{lemma_maxmin_3DL} единственное место, где используется выпуклость, - это переход \circled{4}. Вместо выпуклости достаточно потребовать $\alpha$-квази-выпуклости c $\alpha = 1$ и вместо условия $\forall x \in Q$ потребовать $x = x_*$ в Лемме \ref{lemma_maxmin_3DL}, чтобы выполнялся переход \circled{4}.

$\alpha$-квази-выпуклые функции имеют, в частности, следующее приложение \cite{hardt2017identity}.
\end{remark}

\begin{remark}
На подобии \cite{gasnikov2016universal} можно получить универсальный метод, если в алгоритме ЗМТ на месте $\delta$ поставить $\frac{\alpha_{k+1}}{A_{k+1}}\epsilon$ в (\ref{exitLDL}).
\end{remark}

\section{Зеркальный метод треугольника с неточным выборочным стохастическим $(\delta, L)$-оракулом} \label{sec:mmtDLST}

Будем считать, что мы решаем задачу (\ref{mainTask3}). Ограничимся случаем, когда выбранная норма является евклидовой.

\begin{definition}
\leavevmode

Стохастическим $(\delta, L)$-оракулом будем называть оракул, который на запрашиваемую точку $y$ дает пару $(f_\delta(y), \nabla f_\delta(y;\xi))$ такую, что
\begin{gather}
\label{exitLDLSTOrig}
0 \leq f(x) - f_\delta(y) - \langle\nabla f_\delta(y), x - y\rangle \leq \frac{L}{2}\norm{x - y}^2 + \delta \,\,\, \forall x \in Q.
\end{gather}
\begin{gather}
\label{ST1}
\mathbb{E}\nabla f_\delta(y;\xi) = \nabla f_\delta(y)\,\,\, \forall y \in Q.
\end{gather}
\begin{gather}
\label{ST2}
\mathbb{E}\exp\Bigg(\frac{\norm{\nabla f_\delta(y;\xi) - \nabla f_\delta(y)}^2_*}{D}\Bigg) \leq \exp(1)\,\,\, \forall y \in Q.
\end{gather}

\end{definition}

\begin{definition}
\leavevmode
Определим константу $D_Q$ такую, что
\begin{gather}
D_Q \geq \max_{x,y \in Q}\norm{x - y}
\end{gather}
\end{definition}
Считаем, что $D_Q < \infty$.

В методе, который мы предложим далее, будем оценивать истинный градиент на каждом шаге с помощью некоторого количества $\nabla f_\delta(y;\xi_j)\,\, j \in [1\dots m_{k+1}]$, используя технику mini-batch.

\begin{definition}
\leavevmode
\begin{gather}
\widetilde{\nabla}^{m_{k+1}} f_\delta(y) = \frac{1}{m_{k+1}}\sum_{j=1}^{m_{k+1}}\nabla f_\delta(y;\xi_j)
\end{gather}
\end{definition}

Приведем важные следствия:

\begin{corollary}
\label{corST2}

Пусть $(f_\delta(y), \nabla f_\delta(y;\xi_i)), i = 1,\dots,m_{k+1}\,$ - $m_{k+1}$ независимых выхода стохастического $(\delta, L)$-оракула, $x, y \in Q$ - случайные векторы, $y$ и $\xi_i\,\,i = 1,\dots,m_{k+1}$ - независимы, $\widetilde{L}$ случайная константа, такая, что $\widetilde{L} \geq \frac{3}{2}L$ и выбрано произвольное $\Omega \geq \sqrt{2} - 1$, тогда
\begin{gather*}
\mathbb{P}\Bigg(f_\delta(x) - f_\delta(y) - \langle\widetilde{\nabla}^{m_{k+1}} f_\delta(y), x - y\rangle >\\ > (1 + 2\Omega+ \Omega^2)\frac{3D}{\widetilde{L}m_{k+1}} + \frac{\widetilde{L}}{2}\norm{x - y}^2 + \delta\Bigg) \leq \exp(-\Omega^2/2).
\end{gather*}
\end{corollary}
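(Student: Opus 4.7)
The plan is to reduce the statement to a tail bound on the stochastic error $\Delta := \widetilde{\nabla}^{m_{k+1}} f_\delta(y) - \nabla f_\delta(y)$, so that the only probabilistic input is the mini-batch concentration in the Euclidean norm. First I would use the deterministic $(\delta,L)$-oracle inequality from (\ref{exitLDLSTOrig}), together with the easy consequence $f_\delta(x) \leq f(x)$ (obtained as in (\ref{exitLDLOrig2}) by taking the diagonal), to conclude
\begin{equation*}
f_\delta(x) - f_\delta(y) - \langle \nabla f_\delta(y), x - y \rangle \leq \frac{L}{2}\norm{x-y}^2 + \delta.
\end{equation*}
This inequality is purely deterministic and holds for the true expected gradient $\nabla f_\delta(y)$; all randomness will enter through $\Delta$.

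Next I would insert $\widetilde{\nabla}^{m_{k+1}} f_\delta(y)$ in place of $\nabla f_\delta(y)$ and absorb the resulting defect $\langle -\Delta, x-y\rangle$ using a Fenchel--Young split
\begin{equation*}
\langle -\Delta, x-y \rangle \leq \frac{c}{2}\norm{x-y}^2 + \frac{1}{2c}\norm{\Delta}_*^2
\end{equation*}
with $c = \widetilde{L}-L$. The hypothesis $\widetilde{L} \geq \tfrac{3}{2}L$ gives $\widetilde{L}-L \geq \widetilde{L}/3$, so $\tfrac{1}{2(\widetilde{L}-L)} \leq \tfrac{3}{2\widetilde{L}}$, and the two quadratic terms combine into $\tfrac{\widetilde{L}}{2}\norm{x-y}^2$. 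Thus
\begin{equation*}
f_\delta(x) - f_\delta(y) - \langle \widetilde{\nabla}^{m_{k+1}} f_\delta(y), x-y\rangle \leq \frac{\widetilde{L}}{2}\norm{x-y}^2 + \delta + \frac{3}{2\widetilde{L}}\norm{\Delta}_*^2,
\end{equation*}
and matching this to the statement reduces the claim to
\begin{equation*}
\mathbb{P}\Bigl(\norm{\Delta}_*^2 > (1+\Omega)^2\,\frac{2D}{m_{k+1}}\Bigr) \leq \exp(-\Omega^2/2).
\end{equation*}

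Finally I would establish this tail bound by the standard mini-batch concentration argument in the Euclidean 2-smooth setting: conditioning on $y$ (using the independence of $y$ and $\xi_i$) so that $\nabla f_\delta(y;\xi_i) - \nabla f_\delta(y)$ become i.i.d. mean-zero vectors with the sub-Gaussian moment condition (\ref{ST2}), and then applying the Juditsky--Nemirovski type inequality for sums of independent vectors in the Euclidean norm to obtain $\mathbb{E}\exp\bigl(m_{k+1}\norm{\Delta}_*^2/(c_0 D)\bigr) \leq \exp(1)$ for the appropriate constant, followed by Markov's inequality with the restriction $\Omega \geq \sqrt{2}-1$ used to make the exponent $1$ absorbable into $-\Omega^2/2$. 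The main obstacle is this last step: getting the numerical constants to line up exactly with the factor $\tfrac{3D}{\widetilde{L} m_{k+1}}$ in the statement requires careful bookkeeping of the sub-Gaussian constants (this is the source of the $3$ in $3D$), and also handling the fact that $\widetilde{L}$ itself is random, which is why I conditioned on $y$ before invoking the concentration bound and why the statement needs to be read with $\widetilde{L}$ measurable with respect to the past.
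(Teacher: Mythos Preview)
Your approach is essentially the same as the paper's: both reduce the claim to a tail bound on $\norm{\Delta}_*$ via the deterministic $(\delta,L)$-oracle inequality plus a Fenchel--Young split, then invoke the Juditsky--Nemirovski concentration inequality after conditioning on $y$.

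Two minor remarks. First, the paper organizes the Fenchel step slightly differently: it first replaces $\tfrac{L}{2}\norm{x-y}^2$ by $\tfrac{\widetilde{L}}{3}\norm{x-y}^2$ (using $\widetilde{L}\ge\tfrac32 L$), then applies Fenchel with $c=\widetilde{L}/3$ to get coefficient $\tfrac{3}{\widetilde{L}}$ in front of $\norm{\Delta}_*^2$; this reduces the event \emph{exactly} to $\norm{\Delta}_*^2 > (1+\Omega)^2 D/m_{k+1}$, matching the cited concentration bound with no slack. Your split with $c=\widetilde{L}-L$ gives the sharper coefficient $\tfrac{3}{2\widetilde{L}}$ and hence requires only $\norm{\Delta}_*^2 > (1+\Omega)^2\cdot 2D/m_{k+1}$, which is of course implied by the same bound. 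Either way the randomness of $\widetilde{L}$ is a non-issue, since it cancels out of the reduced inequality before any probability is taken. Second, you are overcomplicating the last step: the paper does not derive an MGF bound and then apply Markov, it simply quotes the ready-made large-deviation inequality $\mathbb{P}\bigl(\norm{\sum_i\gamma_i}\ge(1+\Omega)\sqrt{N}\sigma\bigr)\le\exp(-\Omega^2/2)$ from \cite{juditsky2008large} for centered vectors with $\mathbb{E}\exp(\norm{\gamma_i}^2/\sigma^2)\le e$, valid for $\Omega\ge\sqrt{2}-1$. Citing that result directly removes the ``main obstacle'' you flagged.
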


\begin{proof}
Рассмотрим правую часть условия (\ref{exitLDLSTOrig}).
\begin{gather*}
f(x) - f_\delta(y) - \langle\nabla f_\delta(y), x - y\rangle \leq \frac{L}{2}\norm{x - y}^2 + \delta \\
\end{gather*}
Учтем (\ref{exitLDLOrig2}), тогда
\begin{gather*}
f_\delta(x) - f_\delta(y) - \langle\nabla f_\delta(y), x - y\rangle \leq \frac{L}{2}\norm{x - y}^2 + \delta,\\
f_\delta(x) - f_\delta(y) - \langle\widetilde{\nabla}^{m_{k+1}} f_\delta(y), x - y\rangle \leq\\ \leq \langle\nabla f_\delta(y) - \widetilde{\nabla}^{m_{k+1}} f_\delta(y), x - y\rangle + \frac{L}{2}\norm{x - y}^2 + \delta\leq\\ \leq \langle\nabla f_\delta(y) - \widetilde{\nabla}^{m_{k+1}} f_\delta(y), x - y\rangle + \frac{\widetilde{L}}{3}\norm{x - y}^2 + \delta
\end{gather*}
Воспользуемся неравенством Фенхеля \cite{devolder2013exactness} (формула (7.6))
\begin{equation}
\begin{gathered}
\label{ST_H1}
f_\delta(x) - f_\delta(y) - \langle\widetilde{\nabla}^{m_{k+1}} f_\delta(y), x - y\rangle \leq\\ \leq \frac{\widetilde{L}}{6}\norm{x - y}^2 + \frac{3}{\widetilde{L}}\norm{\widetilde{\nabla}^{m_{k+1}} f_\delta(y) - \nabla f_\delta(y)}^2_* + \frac{\widetilde{L}}{3}\norm{x - y}^2 + \delta
\end{gathered}
\end{equation}
Оценим вероятность того, что
\begin{equation}
\begin{gathered}
\label{ST_H2}
f_\delta(x) - f_\delta(y) - \langle\widetilde{\nabla}^{m_{k+1}} f_\delta(y), x - y\rangle >\\ > (1 + 2\Omega+ \Omega^2)\frac{3D}{Lm_{k+1}} + \frac{\widetilde{L}}{2}\norm{x - y}^2 + \delta
\end{gathered}
\end{equation}
Учитывая (\ref{ST_H1}), из (\ref{ST_H2}) будет следовать
\begin{gather}
\label{ST_H3}
\frac{3}{\widetilde{L}}\norm{\widetilde{\nabla}^{m_{k+1}} f_\delta(y) - \nabla f_\delta(y)}^2_* > (1 + 2\Omega+ \Omega^2)\frac{3D}{\widetilde{L}m_{k+1}},
\end{gather}
что эквивалентно
\begin{gather}
\label{ST_H4}
\norm{\widetilde{\nabla}^{m_{k+1}} f_\delta(y) - \nabla f_\delta(y)}^2_* > (1 + 2\Omega+ \Omega^2)\frac{D}{m_{k+1}},
\end{gather}
Воспользуемся следующим фактом \cite{juditsky2008large}, пусть $\gamma_1,\dots,\gamma_N$ - случайные независимые вектора такие, что
\begin{gather*}
\mathbb{E}\Big(\exp\Big(\frac{\norm{\gamma_i}^2}{\sigma^2}\Big)\Big) \leq \exp(1),\,\,\,\,\mathbb{E}\gamma_i = 0,
\end{gather*}
тогда верно для $\forall \Omega \geq \sqrt{2} - 1$
\begin{gather*}
\mathbb{P}\Bigg(\norm{\sum_{i=1}^{N}\gamma_i} \geq (1 + \Omega)\sqrt{N}\sigma\Bigg) \leq \exp(-\Omega^2/2).
\end{gather*}
Возьмем $\gamma_i = \nabla f_\delta(\widetilde{y};\xi_i) - \nabla f_\delta(\widetilde{y})$, где $\widetilde{y}$ - неслучайный вектор, и $\sigma^2 = D$ и учтем (\ref{ST1}) и (\ref{ST2}), тогда
\begin{gather*}
\mathbb{P}\Bigg(\norm{\sum_{j=1}^{m_{k+1}}\Big(\nabla f_\delta(\widetilde{y};\xi_j) - \nabla f_\delta(\widetilde{y})\Big)}_* > (1 + \Omega)\sqrt{m_{k+1}}\sqrt{D}\Bigg) \leq \exp(-\Omega^2/2)\\
\mathbb{P}\Bigg(\norm{\widetilde{\nabla}^{m_{k+1}} f_\delta(\widetilde{y}) - \nabla f_\delta(\widetilde{y})}_* > (1 + \Omega)\frac{\sqrt{D}}{\sqrt{m_{k+1}}}\Bigg) \leq \exp(-\Omega^2/2)
\end{gather*}

 тогда 
 
\begin{gather*}
\mathbb{P}\Bigg(\norm{\widetilde{\nabla}^{m_{k+1}} f_\delta(y) - \nabla f_\delta(y)}_* > (1 + \Omega)\frac{\sqrt{D}}{\sqrt{m_{k+1}}}\Bigg) = \\ =
\mathbb{E}\Bigg[\mathbb{P}\Bigg(\norm{\widetilde{\nabla}^{m_{k+1}} f_\delta(\widetilde{y}) - \nabla f_\delta(\widetilde{y})}_* > (1 + \Omega)\frac{\sqrt{D}}{\sqrt{m_{k+1}}}\Bigg|y = \widetilde{y}\Bigg)\Bigg] \leq\\\leq
\mathbb{E}\exp(-\Omega^2/2) = \exp(-\Omega^2/2)
\end{gather*}

и

\begin{gather*}
\mathbb{P}\Bigg(\frac{3}{\widetilde{L}}\norm{\widetilde{\nabla}^{m_{k+1}} f_\delta(y) - \nabla f_\delta(y)}_*^2 > (1 + 2\Omega+ \Omega^2)\frac{3D}{\widetilde{L}m_{k+1}}\Bigg) \leq \exp(-\Omega^2/2).
\end{gather*}
Из этого неравенства и из того, что из (\ref{ST_H2}) следует (\ref{ST_H3}), получаем утверждение следствия.
\end{proof}

Рассмотрим алгоритм зеркального метода треугольника со стохастическим $(\delta, L)$-оракулом.

\begin{definition}
$\widetilde{\Omega} = 1 + 2\Omega+ \Omega^2$
\end{definition}

\begin{mdframed}
\textbf{Дано:} $x_0$ - начальная точка, $\epsilon$ - желаемая точность решения, $\delta$, $L$ - константа из $(\delta, L)$-оракула, $\beta$ - доверительный уровень.

\iftrue
Возьмем 
\begin{gather*}
N = \left\lceil\frac{2\sqrt{3}\sqrt{L}D_Q}{\sqrt{\epsilon}}\right\rceil\\
\Omega = \sqrt{2\ln{\frac{N}{\beta}}}.
\end{gather*}

\textbf{0 - шаг:}
\begin{gather}
y_0 = u_0 = x_0 \\
L_1 = \frac{L}{2}\\
\alpha_0 = 0\\
A_0 = \alpha_0
\end{gather}

\textbf{$\boldsymbol{k+1}$ - шаг:}
\begin{gather}
	\text{Найти наибольший корень } \alpha_{k+1} : A_k + \alpha_{k+1} = L_{k+1}\alpha^2_{k+1} \label{eqymir2DLSTA}
\end{gather}
\begin{gather}
A_{k+1} = A_k + \alpha_{k+1}\\
y_{k+1} = \frac{\alpha_{k+1}u_k + A_k x_k}{A_{k+1}} \label{eqymir2DLST}
\end{gather}
\begin{gather}
m_{k+1} = \Big\lceil\frac{3D\widetilde{\Omega}\alpha_{k+1}}{\epsilon}\Big\rceil
\label{eqymir2DLSTMK}
\end{gather}

\begin{center}
Сгенерировать: $ \widetilde{\nabla}^{m_{k+1}} f_\delta(y_{k+1})$
\end{center}
\begin{gather}
\phi_{k+1}(x) = V(x, u_k) + \alpha_{k+1}(f_\delta(y_{k+1}) + \langle \widetilde{\nabla}^{m_{k+1}} f_\delta(y_{k+1}), x - y_{k+1} \rangle + h(x))\\
u_{k+1} = \argmin_{x \in Q}\phi_{k+1}(x) \label{equmir2DLST}\\
x_{k+1} = \frac{\alpha_{k+1}u_{k+1} + A_k x_k}{A_{k+1}} \label{eqxmir2DLST}
\end{gather}
Если выполнено условие
\begin{equation}
\begin{gathered}
f_\delta(x_{k+1}) \leq f_\delta(y_{k+1}) + \langle \widetilde{\nabla}^{m_{k+1}} f_\delta(y_{k+1}), x_{k+1} - y_{k+1} \rangle\ +\\  + \frac{L_{k+1}}{2}\norm{x_{k+1} - y_{k+1}}^2 + \frac{3D\widetilde{\Omega}}{L_{k+1}m_{k+1}} + \delta,
\label{exitLDLST}
\end{gathered}
\end{equation}
то
\begin{gather}
L_{k+2} = \frac{L_{k+1}}{2}
\end{gather}
и перейти к следующему шагу, иначе
\begin{gather}
L_{k+1} = 2L_{k+1}
\end{gather}
и повторить текущий шаг.
\end{mdframed}

\begin{lemma}
\leavevmode
\label{remark2ST}
Пусть $B_N$ - событие того, что хотя бы на одном из первых $N$ шагов алгоритма не выполнится условие (\ref{exitLDLST}) для $L_{k+1}$, в то время как $L_{k+1} \geq \frac{3}{2}L$, тогда, если $\Omega = \sqrt{2\ln{\frac{N}{\beta}}}$, то $\mathbb{P}\Big(B_N\Big) \leq \beta$.

\end{lemma}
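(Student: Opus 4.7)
План доказательства — применить Следствие~\ref{corST2} к каждому из первых $N$ шагов алгоритма и затем воспользоваться оценкой объединения событий. Сначала я бы зафиксировал шаг $k+1$ и обозначил через $L_{k+1}^{*}$ первое значение $L_{k+1}$ во внутреннем цикле этого шага, удовлетворяющее $L_{k+1}^{*} \geq \frac{3}{2}L$. Поскольку внутренний цикл прерывается, как только выполнено условие~(\ref{exitLDLST}), событие $B_N$ вложено в событие того, что для некоторого $k \in \{0,\dots,N-1\}$ условие~(\ref{exitLDLST}) нарушается уже на попытке с $L_{k+1}^{*}$: иначе алгоритм сразу выходит из цикла и большие значения $L_{k+1}$ вообще не тестируются, так что попросту нет других попыток с $L_{k+1} \geq \frac{3}{2}L$.

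Далее я бы применил Следствие~\ref{corST2} к этой попытке с $\widetilde{L} = L_{k+1}^{*}$, $x = x_{k+1}$, $y = y_{k+1}$. Условно на истории до шага $k+1$, точка $y_{k+1}$ и константа $L_{k+1}^{*}$ однозначно определены \emph{до} генерации свежего мини-батча $\xi_1,\dots,\xi_{m_{k+1}}$ на данной попытке, поэтому $y_{k+1}$ не зависит от $\xi_j$ этой попытки, а оценка $\widetilde{\nabla}^{m_{k+1}} f_\delta(y_{k+1})$ построена по независимым от $y_{k+1}$ выборкам. Так как $\widetilde{\Omega} = 1 + 2\Omega + \Omega^2$, правая часть неравенства в Следствии~\ref{corST2} при $\widetilde{L} = L_{k+1}^{*}$ в точности совпадает с правой частью~(\ref{exitLDLST}), и значит вероятность нарушения~(\ref{exitLDLST}) на этой попытке не превосходит $\exp(-\Omega^2/2)$.

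Финальный шаг — оценка объединения по $N$ шагам:
\begin{equation*}
\mathbb{P}(B_N) \leq N\exp(-\Omega^2/2) = N\exp\bigl(-\ln(N/\beta)\bigr) = \beta,
\end{equation*}
что использует выбранное в алгоритме значение $\Omega = \sqrt{2\ln(N/\beta)}$.

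Основная техническая тонкость, которую я предвижу, — аккуратно обосновать независимость и измеримость. Поскольку $\alpha_{k+1}$, $y_{k+1}$ и $m_{k+1}$ пересчитываются при каждом изменении $L_{k+1}$ во внутреннем цикле, нужно чётко формализовать, что на попытке, соответствующей $L_{k+1}^{*}$, свежий мини-батч не зависит от $\sigma$-алгебры, порождённой всей предыдущей историей алгоритма (включая уже выполненные попытки текущего шага). После того как эта бухгалтерия независимостей аккуратно прописана, доказательство напрямую сводится к применению Следствия~\ref{corST2} и оценке объединения, описанным выше.
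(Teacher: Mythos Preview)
Your proposal is correct and follows essentially the same route as the paper: apply Следствие~\ref{corST2} to each step, take a union bound (the paper calls it неравенство Бонферрони), and substitute $\Omega = \sqrt{2\ln(N/\beta)}$. The paper's proof is terser and does not isolate the first attempt $L_{k+1}^{*}\ge\tfrac{3}{2}L$ or discuss the independence bookkeeping you flag; your more careful treatment of the inner-loop structure and of conditional independence is a genuine improvement in rigor over the paper's argument, not a departure from it.
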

\label{STL1}
\begin{proof}
С учетом Следствия \ref{corST2} получаем

\begin{gather*}
\mathbb{P}\Big(B_N\Big) \leq_{{\tiny \circled{1}}} \sum_{k = 0}^{N - 1}\mathbb{P}\Bigg(f_\delta(x_{k+1}) - f_\delta(y_{k+1}) - \langle\widetilde{\nabla}^{m_{k+1}} f_\delta(y_{k+1}), x_{k+1} - y_{k+1}\rangle >\\ > \frac{3D\widetilde{\Omega}}{L_{k+1}m_{k+1}} + \frac{L_{k+1}}{2}\norm{x_{k+1} - y_{k+1}}^2 + \delta\Bigg) \leq_{{\tiny \circled{2}}} N\exp(-\Omega^2/2)
\end{gather*}

{\small \circled{1}} - из неравенство Бонферрони для $B_N =  \bigcup\limits_{i=1}^{N}\widetilde{B}_i$, где $\widetilde{B}_i$ - событие того, что на $i$ шаге не выполнилось (\ref{exitLDLST}) при $L_{i} \geq \frac{3}{2}L$.

{\small \circled{2}} - Следствие \ref{corST2}

Так как по условию леммы $\Omega = \sqrt{2\ln{\frac{N}{\beta}}}$, то
\begin{gather*}
\mathbb{P}\Big(B_N\Big) \leq N\exp(-\Omega^2/2) \leq \beta.\\
\end{gather*}

\end{proof}
\begin{definition}
$\widetilde{L}_N = \max\limits_{k = 0\dots N-1}L_{k+1}$
\end{definition}

\begin{lemma}
	\label{lemma_maxmin_DLST1}
	Пусть для последовательности $\alpha_k$ выполнено
	\begin{align*}
	\alpha_0 = 0\\
	A_k = \sum_{i = 0}^{k}\alpha_i\\
	A_k = L_{k}\alpha_k^2,
	\end{align*}
	где $\{L_k\}$ - последовательность, генерируемая алгоритмом.
	
	Тогда c вероятностью $1 - \beta$
	 \begin{align*}
	 A_k \geq \frac{(k+1)^2}{12L},\,\,\,\forall k = 1,\dots,N
	 \end{align*}
\end{lemma}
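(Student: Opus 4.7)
The plan is to mirror the structure of the deterministic Lemma \ref{lemma_maxmin_1}, but with the constant $2L$ replaced by $3L$: note that $12 = 4 \cdot 3$, while previously $8 = 4\cdot 2$. So the whole proof reduces to (a) establishing a high-probability upper bound of the form $\widetilde{L}_N \leq 3L$, and (b) running the same induction as in Lemma \ref{lemma_maxmin_1} with this new upper bound.

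First I would invoke Lemma \ref{remark2ST}: with probability at least $1-\beta$ (since $\Omega = \sqrt{2\ln(N/\beta)}$), on every one of the first $N$ outer iterations the exit condition (\ref{exitLDLST}) is fulfilled whenever $L_{k+1} \geq \tfrac{3}{2}L$. In the bad event we are doubling $L_{k+1}$, and since by the time $L_{k+1}$ first reaches the threshold $\tfrac{3}{2}L$ the test (\ref{exitLDLST}) succeeds, after the worst possible preceding doubling we have $L_{k+1} \leq 2\cdot\tfrac{3}{2}L = 3L$. Hence on the favorable event, $L_{k+1} \leq 3L$ for all $k = 0,\dots,N-1$, i.e.\ $\widetilde{L}_N \leq 3L$.

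Once this deterministic bound on the $L_k$'s is in hand (conditional on the favorable event), I would carry out the same induction as in Lemma \ref{lemma_maxmin_1}. For the base case $k=1$, $A_1 = \alpha_1 = 1/L_1 \geq 1/(3L) = (1+1)^2/(12L)$. For the inductive step, solving the quadratic from (\ref{eqymir2DLSTA}) as before yields $\alpha_{k+1} = \tfrac{1}{2L_{k+1}} + \sqrt{\tfrac{1}{4L_{k+1}^2} + \tfrac{A_k}{L_{k+1}}} \geq \tfrac{1}{2L_{k+1}} + \sqrt{A_k/L_{k+1}}$. Plugging in the induction hypothesis $A_k \geq (k+1)^2/(12L)$ and $L_{k+1} \leq 3L$ gives
\begin{equation*}
\alpha_{k+1} \geq \frac{1}{6L} + \frac{1}{\sqrt{3L}} \cdot \frac{k+1}{\sqrt{12L}} = \frac{k+2}{6L},
\end{equation*}
and consequently $A_{k+1} = A_k + \alpha_{k+1} \geq \tfrac{(k+1)^2 + 2(k+2)}{12L} = \tfrac{k^2+4k+5}{12L} \geq \tfrac{(k+2)^2}{12L}$, closing the induction.

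The only delicate point is the probabilistic part: I must make sure the bound $\widetilde{L}_N \leq 3L$ is stated once, jointly over all $k=1,\dots,N$, so that the deterministic induction of the second half is valid on a single event of probability $\geq 1-\beta$; this is exactly what Lemma \ref{remark2ST} provides through the union bound already baked into its statement. The rest is mechanical: a reindexed repeat of the argument from Lemma \ref{lemma_maxmin_1} with the constant adjusted from $2L$ to $3L$.
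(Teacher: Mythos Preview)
Your proposal is correct and follows essentially the same approach as the paper's own proof: invoke Lemma~\ref{remark2ST} to conclude that on an event of probability at least $1-\beta$ the doubling procedure never overshoots $3L$ (the paper phrases this as the threshold situation $L_k\in(\tfrac{3}{4}L,\tfrac{3}{2}L)$ giving at worst $3L$ after one doubling), and then rerun the induction of Lemma~\ref{lemma_maxmin_1} with $3L$ in place of $2L$. The paper is terser, simply saying ``далее доказывается аналогично Лемме~\ref{lemma_maxmin_1}'', whereas you carry out the base case and inductive step explicitly; the content is identical.
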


\begin{proof}
Воспользуемся Леммой \ref{remark2ST}, которая говорит следующая: с вероятностью меньше или равной $\beta$ хоть одно $L_k \geq \frac{3}{2}L$. Так как может возникнуть пороговая ситуация, когда $L_k \in (\frac{3}{4}L,\frac{3}{2}L)$, тогда в силу удвоения $L_k$ в крайнем случаем получаем, что с вероятностью больше или равной $1 - \beta$ все $L_k \leq 3L$. Далее доказывается аналогично Лемме \ref{lemma_maxmin_1}.
\end{proof}

\begin{remark}
Так как с вероятностью $1 - \beta$ выполнено $\widetilde{L}_N \leq 3L$, то, как и в Замечании \ref{remark_maxmin}, можно получить, что с вероятностью $1 - \beta$ в среднем на каждом шаге мы будем считать значение всех функций 4 раза, а стохастического градиента $\widetilde{\nabla}^{m_{k+1}} f_\delta(y_{k+1})$ - 2 раза. 
\end{remark}

Введем обозначение: $l_f^\delta(x;y) = f_\delta(y) + \langle \widetilde{\nabla}^{m_{k+1}} f_\delta(y), x - y \rangle$.
\begin{lemma}
	$\forall x \in Q$ выполнено
	\begin{gather*}
		l_f^\delta(x_{k+1};y_{k+1})  + \frac{L_{k+1}}{2}\norm{x_{k+1} - y_{k+1}}^2 + h(x_{k+1}) \leq\\\leq \frac{A_k}{A_{k+1}}(l_f^\delta(x_k;y_{k+1}) + h(x_k)) + \\+
			 \frac{\alpha_{k+1}}{A_{k+1}}(l_f^\delta(x;y_{k+1}) + h(x)
			 + \frac{1}{\alpha_{k+1}}V(x, u_k) - \frac{1}{\alpha_{k+1}}V(x, u_{k+1}))
	\end{gather*}
	\label{lemma_maxmin_3DLST}
\end{lemma}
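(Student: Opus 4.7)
The plan is to follow the proof of Lemma \ref{lemma_maxmin_3DL} essentially line by line, simply replacing the deterministic gradient $\nabla f_\delta(y_{k+1})$ by the mini-batch estimator $\widetilde{\nabla}^{m_{k+1}} f_\delta(y_{k+1})$. A crucial observation is that the left-hand side of the present lemma is the linearized upper bound itself rather than $F(x_{k+1})$, so the $(\delta,L)$-oracle inequality and the exit condition (\ref{exitLDLST}) do not enter the argument at all. Consequently the claim is a deterministic, pathwise identity that holds for every realization of the mini-batch and requires no concentration bound; the probabilistic ingredients (Corollary \ref{corST2}, Lemma \ref{remark2ST}) will be needed only in the subsequent theorem that combines this lemma with the exit test to bound $F(x_{k+1})$.

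First I would use (\ref{eqymir2DLST}) and (\ref{eqxmir2DLST}) to write
$x_{k+1}-y_{k+1} = (\alpha_{k+1}/A_{k+1})(u_{k+1}-u_k)$, so that
$\frac{L_{k+1}}{2}\norm{x_{k+1}-y_{k+1}}^2 = \frac{L_{k+1}\alpha_{k+1}^2}{2A_{k+1}^2}\norm{u_{k+1}-u_k}^2$. Next, because $l_f^\delta(\cdot;y_{k+1})$ is affine in its first argument and $x_{k+1}$ is the convex combination of $u_{k+1}$ and $x_k$ with weights $\alpha_{k+1}/A_{k+1}$ and $A_k/A_{k+1}$, linearity gives
$l_f^\delta(x_{k+1};y_{k+1}) = \tfrac{A_k}{A_{k+1}} l_f^\delta(x_k;y_{k+1}) + \tfrac{\alpha_{k+1}}{A_{k+1}} l_f^\delta(u_{k+1};y_{k+1})$, and convexity of $h$ yields the analogous convex-combination bound for $h(x_{k+1})$.

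Then I would use the defining identity $A_{k+1} = L_{k+1}\alpha_{k+1}^2$ from (\ref{eqymir2DLSTA}) to rewrite $\frac{L_{k+1}\alpha_{k+1}^2}{2A_{k+1}^2} = \tfrac{\alpha_{k+1}}{A_{k+1}} \cdot \tfrac{1}{2\alpha_{k+1}}$, and invoke $1$-strong convexity of the prox-function, $\tfrac{1}{2}\norm{u_{k+1}-u_k}^2 \leq V(u_{k+1},u_k)$, to absorb the squared norm into a Bregman term. Finally, Lemma \ref{lemma_maxmin_2} is applied with $\psi(x) = \alpha_{k+1}(l_f^\delta(x;y_{k+1}) + h(x))$, which is convex in $x$ (affine plus convex), and $z = u_k$: since $\phi_{k+1}(x) = \psi(x) + V(x,u_k)$ and $u_{k+1}$ is its minimizer on $Q$ by (\ref{equmir2DLST}), Lemma \ref{lemma_maxmin_2} gives $\psi(u_{k+1}) + V(u_{k+1},u_k) \leq \psi(x) + V(x,u_k) - V(x,u_{k+1})$ for all $x\in Q$. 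Substituting this into the combined bound from the previous step produces exactly the claimed inequality.

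The main obstacle is pure bookkeeping: tracking the weights $A_k/A_{k+1}$ and $\alpha_{k+1}/A_{k+1}$ and aligning three different estimates (affine decomposition of $l_f^\delta$, strong convexity of the prox-function, and the three-point inequality from Lemma \ref{lemma_maxmin_2}) so that the prefactor $\alpha_{k+1}/A_{k+1}$ factors out cleanly in front of the Bregman differences. No step of this lemma is genuinely probabilistic; randomness enters the analysis only at the level of the next result.
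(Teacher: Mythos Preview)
Your proposal is correct and follows essentially the same route as the paper's own proof: substitute $x_{k+1}=\frac{\alpha_{k+1}u_{k+1}+A_kx_k}{A_{k+1}}$, use affinity of $l_f^\delta(\cdot;y_{k+1})$ and convexity of $h$, rewrite the quadratic term via $A_{k+1}=L_{k+1}\alpha_{k+1}^2$ and $\tfrac12\norm{u_{k+1}-u_k}^2\le V(u_{k+1},u_k)$, and finish with Lemma~\ref{lemma_maxmin_2} applied to $\psi(x)=\alpha_{k+1}(l_f^\delta(x;y_{k+1})+h(x))$. Your observation that the argument is purely deterministic and that the stochastic ingredients enter only in the subsequent lemma is also exactly how the paper proceeds.
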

\begin{proof}

	\begin{gather*}
	l_f^\delta(x_{k+1};y_{k+1})  + \frac{L_{k+1}}{2}\norm{x_{k+1} - y_{k+1}}^2 + h(x_{k+1}) = \\=
	l_f^\delta(\frac{\alpha_{k+1}u_{k+1} + A_k x_k}{A_{k+1}};y_{k+1})  + \frac{L_{k+1}}{2}\norm{\frac{\alpha_{k+1}u_{k+1} + A_k x_k}{A_{k+1}} - y_{k+1}}^2 + \\+ h(\frac{\alpha_{k+1}u_{k+1} + A_k x_k}{A_{k+1}})\leq \\\leq
	f_\delta(y_{k+1}) + 
	\frac{\alpha_{k+1}}{A_{k+1}}\langle \widetilde{\nabla}^{m_{k+1}} f_\delta(y_{k+1}), u_{k+1} - y_{k+1} \rangle\ + \\+
	 \frac{A_k}{A_{k+1}}\langle \widetilde{\nabla}^{m_{k+1}} f_\delta(y_{k+1}), x_k - y_{k+1} \rangle  + \frac{L_{k+1} \alpha^2_{k+1}}{2 A^2_{k+1}}\norm{u_{k+1} - u_k}^2 +\\
	 + \frac{\alpha_{k+1}}{A_{k+1}}h(u_{k+1}) + \frac{A_k}{A_{k+1}}h(x_k)= \\=
	 \frac{A_k}{A_{k+1}}(f_\delta(y_{k+1}) + \langle \widetilde{\nabla}^{m_{k+1}} f_\delta(y_{k+1}), x_k - y_{k+1} \rangle + h(x_k))
	 + \\+
	 \frac{\alpha_{k+1}}{A_{k+1}}(f_\delta(y_{k+1}) + 
	 \langle \widetilde{\nabla}^{m_{k+1}} f_\delta(y_{k+1}), u_{k+1} - y_{k+1} \rangle + h(u_{k+1}))+ \\
	   + \frac{L_{k+1} \alpha^2_{k+1}}{2 A^2_{k+1}}\norm{u_{k+1} - u_k}^2=_{{\tiny \circled{1}}} \\ =
	 \frac{A_k}{A_{k+1}}(l_f^\delta(x_k;y_{k+1}) + h(x_k))
	 + \\+
	 \frac{\alpha_{k+1}}{A_{k+1}}(l_f^\delta(u_{k+1};y_{k+1})
	 + \frac{1}{2 \alpha_{k+1}}\norm{u_{k+1} - u_k}^2 + h(u_{k+1}))\leq \\\leq
	 \frac{A_k}{A_{k+1}}(l_f^\delta(x_k;y_{k+1}) + h(x_k))
	 + \\+
	 \frac{\alpha_{k+1}}{A_{k+1}}(l_f^\delta(u_{k+1};y_{k+1})
	 + \frac{1}{\alpha_{k+1}}V(u_{k+1}, u_k) + h(u_{k+1}))\leq_{{\tiny \circled{2}}} \\\leq
	 \frac{A_k}{A_{k+1}}(l_f^\delta(x_k;y_{k+1}) + h(x_k)) + \\+
	 \frac{\alpha_{k+1}}{A_{k+1}}(l_f^\delta(x;y_{k+1}) + h(x)
	 + \frac{1}{\alpha_{k+1}}V(x, u_k) - \frac{1}{\alpha_{k+1}}V(x, u_{k+1}))
	\end{gather*}
\end{proof}

{\small \circled{1}} - из $A_k = L_{k}\alpha^2_k$

{\small \circled{2}} - из леммы \ref{lemma_maxmin_2} с 
$\psi(x) = \alpha_{k+1}(f_\delta(y_{k+1}) + 
\langle \widetilde{\nabla}^{m_{k+1}} f_\delta(y_{k+1}), x - y_{k+1} \rangle + h(x))$

\begin{lemma}
	С вероятностью больше или равной $1 - \beta$ $\forall x \in Q$, $\forall k \geq 0$
	\begin{gather*}
		A_{k+1} F(x_{k+1}) - A_{k} F(x_{k}) + V(x, u_{k+1}) - V(x, u_{k}) \leq\\\leq \alpha_{k+1}F(x) + 2\delta A_{k+1} + \frac{3D\widetilde{\Omega}}{L_{k+1}m_{k+1}}A_{k+1}+ \alpha_{k+1}\langle \widetilde{\nabla}^{m_{k+1}} f_\delta(y_{k+1})-\nabla f_\delta(y_{k+1}), x - u_k \rangle
	\end{gather*}
	\label{lemma_maxmin_3DLST_2}
\end{lemma}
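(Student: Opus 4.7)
\medskip
\noindent\textbf{План доказательства.} Аргументация почти дословно повторяет схему Леммы~\ref{lemma_maxmin_3DL} (версии с точным $(\delta, L)$-оракулом) и отличается лишь наличием стохастической ошибки $g := \widetilde{\nabla}^{m_{k+1}} f_\delta(y_{k+1}) - \nabla f_\delta(y_{k+1})$ в градиенте. Вероятность $1-\beta$ наследуется из Леммы~\ref{remark2ST}: именно на соответствующем событии для всех $N$ внешних шагов условие выхода (\ref{exitLDLST}) действительно выполнено, и при фиксированной траектории дальнейшие преобразования становятся детерминированными.

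Первый шаг --- объединить условие выхода (\ref{exitLDLST}) с правой частью (\ref{exitLDLOrig2}) и прибавить $h(x_{k+1})$, чтобы получить оценку
\[
F(x_{k+1}) \leq l_f^\delta(x_{k+1}; y_{k+1}) + \tfrac{L_{k+1}}{2}\norm{x_{k+1} - y_{k+1}}^2 + h(x_{k+1}) + \tfrac{3D\widetilde{\Omega}}{L_{k+1}m_{k+1}} + 2\delta.
\]
Далее эта оценка подставляется в уже доказанную Лемму~\ref{lemma_maxmin_3DLST}; на правой стороне останутся только $l_f^\delta(x_k; y_{k+1})$, $l_f^\delta(x; y_{k+1})$, слагаемые с $h$, разность $\tfrac{1}{A_{k+1}}(V(x,u_k) - V(x,u_{k+1}))$ и неизменные добавки $\tfrac{3D\widetilde{\Omega}}{L_{k+1}m_{k+1}} + 2\delta$.

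Чтобы превратить каждое $l_f^\delta(z; y_{k+1}) + h(z)$ в $F(z)$, разложим $\widetilde{\nabla}^{m_{k+1}} f_\delta(y_{k+1}) = \nabla f_\delta(y_{k+1}) + g$ и применим левую часть неравенства (\ref{exitLDLSTOrig}), а именно $f_\delta(y) + \langle \nabla f_\delta(y), z - y\rangle \leq f(z)$ для $z \in Q$. Тогда $l_f^\delta(z; y_{k+1}) + h(z) \leq F(z) + \langle g, z - y_{k+1}\rangle$ при $z \in \{x_k, x\}$. После умножения всего неравенства на $A_{k+1}$ детерминированная часть даёт ровно правую сторону утверждения без последнего скалярного произведения, а шумовая часть --- сумму $A_k\langle g, x_k - y_{k+1}\rangle + \alpha_{k+1}\langle g, x - y_{k+1}\rangle$.

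Основная техническая деталь --- свернуть эти две шумовые скобки в требуемое $\alpha_{k+1}\langle g, x - u_k\rangle$. Из определения $y_{k+1} = (\alpha_{k+1}u_k + A_k x_k)/A_{k+1}$ немедленно следует тождество $A_k(x_k - y_{k+1}) = \alpha_{k+1}(y_{k+1} - u_k)$, благодаря которому
\[
A_k\langle g, x_k - y_{k+1}\rangle + \alpha_{k+1}\langle g, x - y_{k+1}\rangle = \alpha_{k+1}\langle g, y_{k+1} - u_k\rangle + \alpha_{k+1}\langle g, x - y_{k+1}\rangle = \alpha_{k+1}\langle g, x - u_k\rangle.
\]
Это единственное место, где существенно используется специфика выпуклой комбинации в обновлении $y_{k+1}$; остальные члены ($2\delta A_{k+1}$ и $\tfrac{3D\widetilde{\Omega}}{L_{k+1}m_{k+1}} A_{k+1}$) переносятся буквально из детерминированного случая.
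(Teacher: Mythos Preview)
Your proposal is correct and follows essentially the same route as the paper: both start from the exit condition (\ref{exitLDLST}) combined with (\ref{exitLDLOrig2}) to bound $F(x_{k+1})$, feed this into Lemma~\ref{lemma_maxmin_3DLST}, split $\widetilde{\nabla}^{m_{k+1}} f_\delta(y_{k+1}) = \nabla f_\delta(y_{k+1}) + g$, use the left part of (\ref{exitLDLSTOrig}) to pass from $l_f^\delta(\cdot;y_{k+1})+h(\cdot)$ to $F(\cdot)$, and finally collapse the two noise terms via the identity $A_k(x_k-y_{k+1})=\alpha_{k+1}(y_{k+1}-u_k)$ coming from (\ref{eqymir2DLST}). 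The paper's justification of the $1-\beta$ event through Lemma~\ref{remark2ST} is also exactly what you invoke.
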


\begin{proof}

Надо отметить, что с вероятностью больше или равной $1 - \beta$ выполнится за конечное количество шагов (\ref{exitLDLST}). Это следует из Леммы \ref{remark2ST}.

\begin{gather*}
F(x_{k+1}) \leq_{{\tiny \circled{1}}} l_{f_\delta}(x_{k+1};y_{k+1})  + \frac{L_{k+1}}{2}\norm{x_{k+1} - y_{k+1}}^2 + h(x_{k+1}) + \frac{3D\widetilde{\Omega}}{L_{k+1}m_{k+1}} + 2\delta \leq_{{\tiny \circled{2}}} \\\leq
\frac{A_k}{A_{k+1}}(l_{f_\delta}(x_k;y_{k+1}) + h(x_k)) + \\+
	 \frac{\alpha_{k+1}}{A_{k+1}}(l_{f_\delta}(x;y_{k+1}) + h(x)
	 + \frac{1}{\alpha_{k+1}}V(x, u_k) - \frac{1}{\alpha_{k+1}}V(x, u_{k+1})) + \frac{3D\widetilde{\Omega}}{L_{k+1}m_{k+1}} + 2\delta
\end{gather*}

{\small \circled{1}} - из условия (\ref{exitLDLST}) и (\ref{exitLDLOrig2})

{\small \circled{2}} - из Леммы \ref{lemma_maxmin_3DLST}
\begin{gather*}
 F(x_{k+1}) \leq \frac{A_k}{A_{k+1}}(f_\delta(y_{k+1}) + \langle \widetilde{\nabla}^{m_{k+1}} f_\delta(y_{k+1}), x_k - y_{k+1} \rangle + h(x_k)) + \\ +
\frac{\alpha_{k+1}}{A_{k+1}}(f_\delta(y_{k+1}) + \langle \widetilde{\nabla}^{m_{k+1}} f_\delta(y_{k+1}), x - y_{k+1} \rangle + h(x)
	 +\\+ \frac{1}{\alpha_{k+1}}V(x, u_k) - \frac{1}{\alpha_{k+1}}V(x, u_{k+1})) + \frac{3D\widetilde{\Omega}}{L_{k+1}m_{k+1}} + 2\delta= \\ = \frac{A_k}{A_{k+1}}(f_\delta(y_{k+1}) + \langle \nabla f_\delta(y_{k+1}), x_k - y_{k+1} \rangle+ h(x_k) +\\+ \langle \widetilde{\nabla}^{m_{k+1}} f_\delta(y_{k+1})-\nabla f_\delta(y_{k+1}), x_k - y_{k+1} \rangle) + \\ +
\frac{\alpha_{k+1}}{A_{k+1}}(f_\delta(y_{k+1}) + \langle \nabla f_\delta(y_{k+1}), x - y_{k+1} \rangle + h(x)+\\+ \langle \widetilde{\nabla}^{m_{k+1}} f_\delta(y_{k+1})-\nabla f_\delta(y_{k+1}), x - y_{k+1} \rangle
	 +\\+ \frac{1}{\alpha_{k+1}}V(x, u_k) - \frac{1}{\alpha_{k+1}}V(x, u_{k+1})) + \frac{3D\widetilde{\Omega}}{L_{k+1}m_{k+1}} + 2\delta \leq_{{\tiny \circled{1}}}\\\leq
	 \frac{A_k}{A_{k+1}}F(x_k) + \frac{\alpha_{k+1}}{A_{k+1}}(F(x) + \frac{1}{\alpha_{k+1}}V(x, u_k) - \frac{1}{\alpha_{k+1}}V(x, u_{k+1})) +\\+ \frac{3D\widetilde{\Omega}}{L_{k+1}m_{k+1}} + 2\delta +\frac{\alpha_{k+1}}{A_{k+1}}(\langle \widetilde{\nabla}^{m_{k+1}} f_\delta(y_{k+1})-\nabla f_\delta(y_{k+1}), x - y_{k+1} \rangle) +\\+ \frac{\alpha_{k+1}}{A_{k+1}}\langle \widetilde{\nabla}^{m_{k+1}} f_\delta(y_{k+1})-\nabla f_\delta(y_{k+1}), y_{k+1} - u_k \rangle
	 =\\=
	 	 \frac{A_k}{A_{k+1}}F(x_k) + \frac{\alpha_{k+1}}{A_{k+1}}(F(x) + \frac{1}{\alpha_{k+1}}V(x, u_k) - \frac{1}{\alpha_{k+1}}V(x, u_{k+1})) +\\+ \frac{3D\widetilde{\Omega}}{L_{k+1}m_{k+1}} + 2\delta +\frac{\alpha_{k+1}}{A_{k+1}}(\langle \widetilde{\nabla}^{m_{k+1}} f_\delta(y_{k+1})-\nabla f_\delta(y_{k+1}), x - u_{k} \rangle)
\end{gather*}

{\small \circled{1}} - из левой части \ref{exitLDLOrig2} и $A_{k}(y_{k+1} - x_k) = \alpha_{k+1} (u_k - y_{k+1})$ из (\ref{eqymir2DLST}).

\end{proof}

Нам будет полезен факт из \cite{lan2012validation}\cite{devolder2013exactness}
\begin{lemma}
\label{lemmaDev}
Пусть $\gamma_1$,...,$\gamma_k$ - i.i.d случайные величины, $\Gamma_k$ и $\nu_k$ - неслучайные функции от $\gamma_i$, и $c_i$ - неслучайные константы, для которых верно следующее
\begin{gather*}
\mathbb{E}(\Gamma_i|\gamma_1,\dots,\gamma_{i-1}) = 0\\
\abs{\Gamma_i} \leq c_i \nu_i\\
\mathbb{E}\Bigg(\exp\Bigg(\frac{\nu_i^2}{\sigma^2}\Bigg)\Bigg|\gamma_1,\dots,\gamma_{i-1}\Bigg) \leq \exp(1)
\end{gather*}

Тогда
\begin{gather*}
\mathbb{P}\Bigg(\sum_{i=1}^{k}\Gamma_i \geq \sqrt{3}\sqrt{\widehat{\Omega}}\sigma\sqrt{\sum_{i=1}^{k}c_i^2}\Bigg) \leq \exp(-\widehat{\Omega}) \,\,\forall k;\forall \widehat{\Omega} \geq 0
\end{gather*}

\end{lemma}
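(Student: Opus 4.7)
\noindent\textit{Proof plan.} The approach is the classical Chernoff method for martingale differences with a sub-Gaussian-type envelope. The plan is first to apply Markov's inequality to $\exp(\lambda\sum\Gamma_i)$ for $\lambda>0$, obtaining
\begin{equation*}
\mathbb{P}\Bigl(\sum_{i=1}^{k}\Gamma_i\geq t\Bigr)\leq e^{-\lambda t}\,\mathbb{E}\exp\Bigl(\lambda\sum_{i=1}^{k}\Gamma_i\Bigr),
\end{equation*}
and then to factor the moment generating function of the sum into a product of conditional MGFs by repeatedly applying the tower property with respect to $\mathcal{F}_{i-1}=\sigma(\gamma_1,\dots,\gamma_{i-1})$. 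The problem then reduces to producing a deterministic bound on each $\mathbb{E}[e^{\lambda\Gamma_i}\mid\mathcal{F}_{i-1}]$.

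For a single increment I would combine the martingale hypothesis $\mathbb{E}[\Gamma_i\mid\mathcal{F}_{i-1}]=0$ with the bound $|\Gamma_i|\leq c_i\nu_i$. Conditionally on both $\mathcal{F}_{i-1}$ and $\nu_i$, the standard Hoeffding lemma for bounded zero-mean variables yields $\mathbb{E}[e^{\lambda\Gamma_i}\mid\mathcal{F}_{i-1},\nu_i]\leq\exp(\lambda^2 c_i^2\nu_i^2/2)$. To integrate out $\nu_i$ I would invoke the sub-Gaussian assumption via Jensen's inequality applied to the concave map $x\mapsto x^{\alpha}$ with $\alpha=\lambda^2 c_i^2\sigma^2/2\leq 1$, which rewrites the exponent $\lambda^2 c_i^2\nu_i^2/2$ as $\alpha\cdot(\nu_i^2/\sigma^2)$ and produces $\mathbb{E}[e^{\lambda\Gamma_i}\mid\mathcal{F}_{i-1}]\leq\exp(\lambda^2 c_i^2\sigma^2/2)$ for all $\lambda$ small enough. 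Telescoping back through the tower property then gives $\mathbb{E}\exp(\lambda\sum\Gamma_i)\leq\exp\bigl(\tfrac{\lambda^2\sigma^2}{2}\sum c_i^2\bigr)$ throughout the admissible range of $\lambda$.

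The final step is the standard tuning of $\lambda$ in $\exp\bigl(-\lambda t+\tfrac{\lambda^2\sigma^2 C}{2}\bigr)$ with $C=\sum c_i^2$, matching the target deviation $t=\sqrt{3\widehat{\Omega}}\,\sigma\sqrt{C}$ with the tail probability $e^{-\widehat{\Omega}}$. The main obstacle I anticipate is obtaining exactly the constant $\sqrt{3}$ rather than the $\sqrt{2}$ that a purely sub-Gaussian computation gives: the constraint $\lambda^2 c_i^2\sigma^2\leq 2$ forced by the Jensen step becomes binding once $\widehat{\Omega}$ is large, so the argument has to be run either with a slightly weaker elementary MGF estimate of the form $e^x\leq 1+x+x^2$ on $|x|\leq 1$, or by splitting the tail into a moderate-deviation regime (where the optimal $\lambda$ is feasible) and a large-deviation regime (where one uses a truncated linear MGF bound) and then combining the two. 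I would cross-check the final numerical constant against the derivations of \cite{lan2012validation} and \cite{devolder2013exactness} cited in the paper to make sure every multiplicative factor propagates correctly.
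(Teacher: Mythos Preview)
The paper does not actually prove this lemma; it is quoted as a known fact from \cite{lan2012validation} and \cite{devolder2013exactness}, so there is nothing in the text to compare your argument against directly.

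Your Chernoff--martingale architecture is the right one, but the primary route you sketch has a real gap. You propose to condition on both $\mathcal{F}_{i-1}$ and $\nu_i$ and then invoke Hoeffding's lemma for bounded zero-mean variables. This fails because $\nu_i$ is not $\mathcal{F}_{i-1}$-measurable: in the intended application $\nu_i=\|\nabla f_\delta(y_{k_i};\xi_i)-\nabla f_\delta(y_{k_i})\|_*$ depends on the fresh sample $\xi_i$, and the hypothesis $\mathbb{E}[\exp(\nu_i^{2}/\sigma^{2})\mid\mathcal{F}_{i-1}]\leq e$ would be vacuous otherwise. Once you condition on $\nu_i$ the centering $\mathbb{E}[\Gamma_i\mid\mathcal{F}_{i-1},\nu_i]=0$ is no longer guaranteed, so Hoeffding's lemma is not applicable as stated.

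The correct path is the alternative you allude to only in passing: use a pointwise scalar inequality of the type $e^{x}\leq x+e^{\kappa x^{2}}$ (the choice $\kappa=3/4$ is what eventually yields the factor $\sqrt{3}$), apply it to $x=\lambda\Gamma_i$, and take a \emph{single} conditional expectation $\mathbb{E}[\,\cdot\mid\mathcal{F}_{i-1}]$ so that the linear term vanishes by the martingale property. The surviving term $\mathbb{E}[\exp(\kappa\lambda^{2}c_i^{2}\nu_i^{2})\mid\mathcal{F}_{i-1}]$ is then controlled by Jensen and the moment hypothesis, provided $\kappa\lambda^{2}c_i^{2}\sigma^{2}\leq 1$. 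The range restriction on $\lambda$ and the resulting two-regime optimization you anticipate are exactly what appears in the cited references; your instinct about the constant is correct, but it comes from this inequality and not from tightening a Hoeffding step.
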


\begin{lemma}
	\label{mainTheoremDLST}
	Пусть $x_*$ - решения задачи (\ref{mainTask3}), тогда с вероятностью $1 - 2\beta$
	\begin{equation*}
	F(x_N) - F(x_*)\leq \frac{R^2}{A_{N}} + 2\delta N + \epsilon + D_Q\sqrt{\frac{\epsilon}{A_N}}
	\end{equation*}
\end{lemma}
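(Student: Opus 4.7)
The plan is to sum the one-step estimate of Lemma~\ref{lemma_maxmin_3DLST_2} over $k=0,\ldots,N-1$ at the test point $x=x_*$, simplify by telescoping, handle three deterministic error sources by plugging in the algorithmic choice of $m_{k+1}$, and finally control the one remaining stochastic sum via the martingale tail bound of Lemma~\ref{lemmaDev}.

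First, adding the $N$ copies of Lemma~\ref{lemma_maxmin_3DLST_2} with $x=x_*$, telescoping the pairs $A_{k+1}F(x_{k+1})-A_kF(x_k)$, and using $A_0=0$, $V(x_*,u_0)\leq R^2$, $V(x_*,u_N)\geq 0$, and $A_{k+1}\leq A_N$ in the $\delta$-term, I obtain on the event of Lemma~\ref{remark2ST} (probability $\geq 1-\beta$)
\begin{equation*}
A_N\bigl(F(x_N)-F(x_*)\bigr)\leq R^2 + 2\delta N A_N + \sum_{k=0}^{N-1}\frac{3D\widetilde{\Omega}\,A_{k+1}}{L_{k+1}m_{k+1}} + S_N,
\end{equation*}
where
\begin{equation*}
S_N\;:=\;\sum_{k=0}^{N-1}\alpha_{k+1}\bigl\langle\widetilde{\nabla}^{m_{k+1}}f_\delta(y_{k+1})-\nabla f_\delta(y_{k+1}),\,x_*-u_k\bigr\rangle.
\end{equation*}
For the middle sum the algorithmic rule $m_{k+1}\geq 3D\widetilde{\Omega}\alpha_{k+1}/\epsilon$ combined with the identity $A_{k+1}=L_{k+1}\alpha_{k+1}^2$ from line~(\ref{eqymir2DLSTA}) gives $\frac{3D\widetilde{\Omega}A_{k+1}}{L_{k+1}m_{k+1}}\leq \epsilon\alpha_{k+1}$, so the sum is at most $\epsilon A_N$.

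The main step is to bound $S_N$ with high probability. The summands $\Gamma_k$ form a martingale-difference sequence for the filtration generated by the mini-batch samples: conditional on all randomness up to step $k$, the vectors $u_k,y_{k+1}$ and scalars $\alpha_{k+1},m_{k+1}$ are fixed, while (\ref{ST1}) makes the conditional mean of $\widetilde{\nabla}^{m_{k+1}}f_\delta(y_{k+1})-\nabla f_\delta(y_{k+1})$ vanish. By Cauchy--Schwarz and the diameter bound $\norm{x_*-u_k}\leq D_Q$,
\begin{equation*}
\abs{\Gamma_k}\leq \alpha_{k+1}D_Q\,\bigl\lVert\widetilde{\nabla}^{m_{k+1}}f_\delta(y_{k+1})-\nabla f_\delta(y_{k+1})\bigr\rVert_{*}.
\end{equation*}
To feed this into Lemma~\ref{lemmaDev} with a single scale $\sigma$, I would absorb the varying $\sqrt{m_{k+1}}$ by setting $c_k=\alpha_{k+1}D_Q/\sqrt{m_{k+1}}$ and $\nu_k=\sqrt{m_{k+1}}\,\lVert\widetilde{\nabla}^{m_{k+1}}f_\delta(y_{k+1})-\nabla f_\delta(y_{k+1})\rVert_{*}$. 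In the Euclidean setting of Section~\ref{sec:mmtDLST}, assumption~(\ref{ST2}) on each $\nabla f_\delta(y;\xi_j)$ yields via a Juditsky--Nemirovski--Lan style inequality a universal $\sigma^2 = cD$ (independent of $m_{k+1}$) for which $\mathbb{E}[\exp(\nu_k^2/\sigma^2)\mid\text{past}]\leq e$.

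Using $m_{k+1}\geq 3D\widetilde{\Omega}\alpha_{k+1}/\epsilon$ once more,
\begin{equation*}
\sum_{k=0}^{N-1}c_k^2 = D_Q^2\sum_{k=0}^{N-1}\frac{\alpha_{k+1}^2}{m_{k+1}}\leq \frac{D_Q^2\,\epsilon\, A_N}{3D\widetilde{\Omega}},
\end{equation*}
so Lemma~\ref{lemmaDev} with $\widehat{\Omega}=\ln(1/\beta)$ (making its tail at most $\beta$) gives, on a further event of probability $\geq 1-\beta$,
\begin{equation*}
S_N\leq \sqrt{3\widehat{\Omega}}\,\sigma\sqrt{\sum_{k=0}^{N-1}c_k^2}\leq D_Q\sqrt{\frac{c\widehat{\Omega}}{\widetilde{\Omega}}}\sqrt{\epsilon A_N}\leq D_Q\sqrt{\epsilon A_N},
\end{equation*}
where the last step uses $\widetilde{\Omega}=(1+\Omega)^2$ with $\Omega=\sqrt{2\ln(N/\beta)}$, which makes $\widetilde{\Omega}\geq c\widehat{\Omega}$. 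Dividing the telescoped inequality by $A_N$ and union-bounding over the two failure events (Lemma~\ref{remark2ST} and the concentration above) produces the claim with probability $\geq 1-2\beta$.

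The hard part will be precisely this concentration step: one has to justify a sub-Gaussian estimate for the mini-batch averages $\widetilde{\nabla}^{m_{k+1}}f_\delta(y_{k+1})$ that is uniform in the (algorithmically chosen, hence data-dependent) batch sizes $m_{k+1}$, which is why the Euclidean hypothesis is essential, and then track the constants coming from $\widetilde{\Omega}$ and $\widehat{\Omega}$ carefully enough that the stochastic contribution enters the final bound only through the clean factor $D_Q\sqrt{\epsilon/A_N}$, with no residual dependence on $D$, $L$, or $N$.
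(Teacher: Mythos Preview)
Your telescoping step, the handling of the $\delta$--term via $A_{k+1}\le A_N$, and the simplification of the oracle--error term using $A_{k+1}=L_{k+1}\alpha_{k+1}^2$ together with $m_{k+1}\ge 3D\widetilde{\Omega}\alpha_{k+1}/\epsilon$ all match the paper exactly. The overall architecture is the same.

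The one substantive difference is the concentration of $S_N$. You apply Lemma~\ref{lemmaDev} at the \emph{mini-batch} level with $N$ terms, setting $\nu_k=\sqrt{m_{k+1}}\,\lVert\widetilde{\nabla}^{m_{k+1}}f_\delta(y_{k+1})-\nabla f_\delta(y_{k+1})\rVert_*$ and then appealing to a (not-yet-proved) uniform sub-Gaussian bound $\mathbb{E}[\exp(\nu_k^2/(cD))\mid\text{past}]\le e$. This is precisely the ``hard part'' you flag, and it carries two costs: you must actually prove such a bound for mini-batch averages with a constant $c$ independent of $m_{k+1}$, and you then need $\widetilde{\Omega}\ge c\,\widehat{\Omega}$ to absorb that constant, which is not automatic for small $N$ if $c>1$.

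The paper avoids all of this by a simple device: it \emph{unrolls} each mini-batch into its individual samples and applies Lemma~\ref{lemmaDev} with $\sum_{k}m_{k+1}$ terms instead of $N$ terms. Concretely, it takes
\[
\Gamma_i=\frac{\alpha_{k_i+1}}{A_N m_{k_i+1}}\bigl\langle\nabla f_\delta(y_{k_i+1};\xi_i)-\nabla f_\delta(y_{k_i+1}),\,x_*-u_{k_i}\bigr\rangle,\quad
\nu_i=\lVert\nabla f_\delta(y_{k_i+1};\xi_i)-\nabla f_\delta(y_{k_i+1})\rVert_*,
\]
with $c_i=D_Q\alpha_{k_i+1}/(A_N m_{k_i+1})$. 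Then $\sigma^2=D$ comes \emph{directly} from~(\ref{ST2}) with no extra lemma and no auxiliary constant $c$. Because each step contributes $m_{k+1}$ equal copies of $c_i^2$, the sum $\sum_i c_i^2$ coincides with your $\sum_k c_k^2/A_N^2$, so the arithmetic after that point is identical and the ratio $\sqrt{\widehat{\Omega}/\widetilde{\Omega}}\le 1$ suffices. In short, the paper's unrolling trick eliminates exactly the step you identified as the hard part; incorporating it would make your argument complete without any new analytic input.
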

\begin{proof}

	Учтем (\ref{eqymir2DLSTMK}) и (\ref{eqymir2DLSTA}) в Лемме \ref{lemma_maxmin_3DLST_2}, тогда с вероятностью большой или равной $1 - \beta$ $\forall k \geq 0$
	\begin{gather*}
		A_{k+1} F(x_{k+1}) - A_{k} F(x_{k}) + V(x, u_{k+1}) - V(x, u_{k}) \leq\\\leq \alpha_{k+1}F(x) + 2\delta A_{k+1} + \alpha_{k+1}\epsilon + \alpha_{k+1}\langle \widetilde{\nabla}^{m_{k+1}} f_\delta(y_{k+1})-\nabla f_\delta(y_{k+1}), x - u_k \rangle
	\end{gather*}
	
	Просуммируем неравенства по $k = 0, ..., N - 1$, 
	\begin{gather*}
		A_{N} F(x_N) - A_{0} F(x_0) + V(x, u_N) - V(x, u_0) \leq (A_N - A_0)F(x) +\\+ 2\delta\sum_{k = 0}^{N-1}A_{k+1} + \sum_{k = 0}^{N-1}\alpha_{k+1}\epsilon + \sum_{k = 0}^{N-1}\alpha_{k+1}\langle \widetilde{\nabla}^{m_{k+1}} f_\delta(y_{k+1})-\nabla f_\delta(y_{k+1}), x - u_k \rangle
	\end{gather*}
	Откуда, с учетом неравенства $V(x, u_N)\geq 0\,\,\,\forall x \in Q$
	\begin{gather*}
		A_{N} F(x_N) - A_NF(x)  \leq V(x, u_0) +\\+
		 2\delta\sum_{k = 0}^{N-1}A_{k+1}+A_N\epsilon+ \sum_{k = 0}^{N-1}\alpha_{k+1}\langle \widetilde{\nabla}^{m_{k+1}} f_\delta(y_{k+1})-\nabla f_\delta(y_{k+1}), x - u_k \rangle
	\end{gather*}
	
	Возьмем $x = x_*$, оценим $A_{k+1}$ через $A_N$.
	\begin{gather*}
	A_{N} F(x_N) - A_NF(x_*)  \leq V(x_*, u_0) +\\+
			 2\delta N A_{N}+A_N\epsilon+ \sum_{k = 0}^{N-1}\alpha_{k+1}\langle \widetilde{\nabla}^{m_{k+1}} f_\delta(y_{k+1})-\nabla f_\delta(y_{k+1}), x - u_k \rangle\\
	F(x_N) - F(x_*)\leq \frac{R^2}{A_{N}} + 2\delta N + \epsilon + \sum_{k = 0}^{N-1}\frac{\alpha_{k+1}}{A_N}\langle \widetilde{\nabla}^{m_{k+1}} f_\delta(y_{k+1})-\nabla f_\delta(y_{k+1}), x - u_k \rangle
	\end{gather*}
	
	Воспользуемся Леммой \ref{lemmaDev} для последнего слагаемого в неравенстве c $\gamma_i =\xi_i$, $\sigma^2 = D$, $\Gamma_i = \frac{\alpha_{k_i+1}}{A_N m_{k_i+1}}\langle\nabla f_\delta(y_{k_i};\xi_i) - \nabla f_\delta(y_{k_i}), x - u_{(k_i - 1)}\rangle$, $c_{i} = \frac{D_Q\alpha_{k_i+1}}{A_N m_{k_i+1}}$,$\nu_i = \norm{\nabla f_\delta(y_{k_i};\xi_i) - \nabla f_\delta(y_{k_i})}_*$ с $i \in [1,\dots,\sum_{k=0}^{N-1}m_{k+1}]$, где $k_i$ равно $k + 1$ для всех $i \in [m_{k} + 1,\dots, m_{k+1}]$. Выберем $\widehat{\Omega} = \ln(\frac{1}{\beta}) \leq \widetilde{\Omega}$, тогда с вероятностью не меньше $1 - 2\beta$
	\begin{gather*}
	F(x_N) - F(x_*)\leq \frac{R^2}{A_{N}} + 2\delta N + \epsilon + \sqrt{3}\sqrt{\widehat{\Omega}D}\sqrt{\sum_{i=0}^{N-1}\frac{D_Q^2\alpha^2_{k+1}}{A_N^2m_{k+1}}}
	\end{gather*}
	
	Учтем (\ref{eqymir2DLSTMK})
	\begin{gather*}
	F(x_N) - F(x_*)\leq \frac{R^2}{A_{N}} + 2\delta N + \epsilon + D_Q\frac{\sqrt{\widehat{\Omega}}}{\sqrt{\widetilde{\Omega}}}\sqrt{\sum_{i=0}^{N-1}\frac{\alpha_{k+1}\epsilon}{A_N^2}}\\
	F(x_N) - F(x_*)\leq \frac{R^2}{A_{N}} + 2\delta N + \epsilon + D_Q\sqrt{\frac{\epsilon}{A_N}}
	\end{gather*}
	
\end{proof}

\begin{theorem}
\label{mainTheoremDLST2}
Пусть $\delta \leq \frac{\epsilon^\frac{3}{2}}{6\sqrt{3}\sqrt{L}D_Q}$. Тогда с вероятностью $1 - 3\beta$
\begin{gather*}
F(x_N) - F(x_*)\leq 4\epsilon.
	\end{gather*}
\end{theorem}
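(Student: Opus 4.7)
Основная идея — объединить две уже доказанные оценки (Лемму \ref{mainTheoremDLST} для скорости сходимости и Лемму \ref{lemma_maxmin_DLST1} для нижней границы на $A_N$) и проверить, что при выбранном $N$ и заданном условии на $\delta$ каждое из четырёх слагаемых в правой части оценки из Леммы \ref{mainTheoremDLST} не превосходит $\epsilon$.

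Сначала применю неравенство Бонферрони к ``плохим'' событиям: событие Леммы \ref{mainTheoremDLST} выполнено с вероятностью не меньше $1-2\beta$, а событие Леммы \ref{lemma_maxmin_DLST1} — с вероятностью не меньше $1-\beta$, поэтому оба выполнены одновременно с вероятностью не меньше $1-3\beta$, как и утверждается в формулировке.

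На этом благоприятном событии, подставляя $N = \lceil 2\sqrt{3}\sqrt{L}D_Q/\sqrt{\epsilon}\rceil$ в оценку $A_N \geq (N+1)^2/(12L)$, получу $(N+1)^2 \geq N^2 \geq 12LD_Q^2/\epsilon$, то есть $A_N \geq D_Q^2/\epsilon$. Отсюда сразу $D_Q\sqrt{\epsilon/A_N} \leq \epsilon$, и, поскольку $R^2 \leq D_Q^2$ (в евклидовой ситуации данного раздела это следует из определений $D_Q$ и $V$), также $R^2/A_N \leq \epsilon$. Для слагаемого $2\delta N$ подставлю условие $\delta \leq \epsilon^{3/2}/(6\sqrt{3}\sqrt{L}D_Q)$ и верхнюю оценку $N \leq 2\sqrt{3}\sqrt{L}D_Q/\sqrt{\epsilon}+1$: главный член даст $2\epsilon/3$, а поправка от округления — ещё не более $\epsilon/3$ при естественном предположении $\sqrt{\epsilon} \leq 2\sqrt{3}\sqrt{L}D_Q$ (иначе $N=1$ и утверждение тривиально). Таким образом, $2\delta N \leq \epsilon$.

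Сложив все четыре слагаемых ($R^2/A_N + 2\delta N + \epsilon + D_Q\sqrt{\epsilon/A_N} \leq 4\epsilon$), получу требуемое. Главная техническая трудность здесь не содержательная, а ``бухгалтерская'': нужно аккуратно проконтролировать соотношение $R \leq D_Q$ через определения $D_Q$ и прокс-структуры, а также убедиться, что поправка от округления $\lceil\cdot\rceil$ в определении $N$ не нарушает итоговую оценку $4\epsilon$; все остальные шаги представляют собой непосредственную подстановку в уже доказанные леммы.
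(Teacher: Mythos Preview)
Your plan is essentially identical to the paper's own proof: combine Лемму~\ref{mainTheoremDLST} (probability $1-2\beta$) with Лемму~\ref{lemma_maxmin_DLST1} (probability $1-\beta$) via a union bound, use $A_N \geq (N+1)^2/(12L)$ together with the choice of $N$ to bound $R^2/A_N$ and $D_Q\sqrt{\epsilon/A_N}$ by $\epsilon$, and invoke the condition on $\delta$ for the term $2\delta N$. The paper also uses $R^2 \leq D_Q^2$ without further comment, just as you do.

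One small bookkeeping remark: your claim that the ceiling correction contributes at most $\epsilon/3$ under the assumption $\sqrt{\epsilon} \leq 2\sqrt{3}\sqrt{L}D_Q$ is not quite right --- under that assumption the extra $2\delta$ is only bounded by $2\epsilon/3$, so $2\delta N$ comes out as $4\epsilon/3$ rather than $\epsilon$. The paper itself simply ignores the ceiling and treats $N$ as $2\sqrt{3}\sqrt{L}D_Q/\sqrt{\epsilon}$, which gives $2\delta N \leq 2\epsilon/3 < \epsilon$ directly; you should either do the same or tighten the side assumption to $\sqrt{\epsilon} \leq \sqrt{3}\sqrt{L}D_Q$ to make your $\epsilon/3$ claim valid.
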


\begin{proof}
Мы знаем из Леммы \ref{lemma_maxmin_DLST1}, что с вероятностью $1 - \beta$ верно неравенство $A_N \geq \frac{(N+1)^2}{12L}$, отсюда и условия на $N$, получаем, что

\begin{gather*}
\frac{R^2}{A_{N}} \leq \frac{D_Q^2}{A_{N}} \leq \frac{12LD_Q^2}{\left(\frac{2\sqrt{3}\sqrt{L}D_Q}{\sqrt{\epsilon}}+1\right)^2}\leq
\frac{D_Q^2\epsilon}{D_Q^2}=\epsilon
\end{gather*}

и

\begin{gather*}
D_Q\sqrt{\frac{\epsilon}{A_N}} \leq D_Q\sqrt{\frac{\epsilon^2}{D_Q^2}} =\epsilon
\end{gather*}.

Помимо этого с вероятностью $1 - \beta$ из Леммы \ref{mainTheoremDLST}

\begin{equation*}
F(x_N) - F(x_*)\leq \frac{R^2}{A_{N}} + 2\delta N + \epsilon + D_Q\sqrt{\frac{\epsilon}{A_N}}
\end{equation*}

Объединяя все вместе, включая условие на $\delta$, получаем, что с вероятностью $1 - 3\beta$

\begin{equation*}
F(x_N) - F(x_*) \leq 4\epsilon
\end{equation*}

\end{proof}

\begin{remark}
\leavevmode
Далее все утверждения будут выполнятся с вероятностью не меньше $1 - 3\beta$. Оценим количество обращений $M$ к оракулу за стохастическими градиентами. По ходу алгоритма мы можем контролировать $\frac{D_Q^2}{A_N}$, тогда пусть $\widetilde{N} + 1$ - минимальное число шагов, для которого выполнено $\frac{D_Q^2}{A_{\widetilde{N} + 1}} \leq \epsilon$. Ясно, что $\widetilde{N} + 1 \leq N$, причем условие $\frac{D_Q^2}{A_{\widetilde{N} + 1}} \leq \epsilon$ является достаточным условием для достижения $\epsilon$-решения по функции. Как и в Замечании \ref{remark_maxmin} количество обращений за $\widetilde{\nabla}^{m_{k+1}} f_\delta(y_{k+1})$ на $k$-ом шаге будет равно $2 + \log\left(\frac{L_k}{L_{k-1}}\right)$. $L_{\widetilde{N} + 1} \leq 3L$. Поэтому общее количество обращений к оракулу за $\nabla f_\delta(y;\xi)$ будет равно

\begin{gather*}
M = 2\sum_{k=1}^{\widetilde{N} + 1}m_{k}\left(2 + \log\left(\frac{L_k}{L_{k-1}}\right)\right) = 4\sum_{k=1}^{\widetilde{N} + 1}m_{k} + 2\sum_{k=1}^{\widetilde{N} + 1}m_{k}\log\left(\frac{L_k}{L_{k-1}}\right) \leq\\\leq
4\sum_{k=1}^{\widetilde{N} + 1}m_{k} + 2\sum_{j=1}^{\widetilde{N} + 1}m_{j}\sum_{k=1}^{\widetilde{N} + 1}\log\left(\frac{L_k}{L_{k-1}}\right) =\\
\left(4 + \log\left(\frac{L_{\widetilde{N} + 1}}{L_0}\right)\right)\sum_{k=1}^{\widetilde{N} + 1}m_{k} \leq \left(4 + \log\left(\frac{3L}{L_0}\right)\right)\sum_{k=1}^{\widetilde{N} + 1}m_{k}
\end{gather*}

Рассмотрим $\sum_{k=1}^{\widetilde{N} + 1}m_{k}$:

\begin{gather*}
\sum_{k=0}^{\widetilde{N}}m_{k+1}\leq {\widetilde{N} + 1} + \frac{3D\widetilde{\Omega}}{\epsilon}A_{\widetilde{N} + 1}
\end{gather*}

Так как
\begin{gather*}
\alpha_{\widetilde{N}+1} = \frac{1}{2L_{\widetilde{N}+1}} + \sqrt{\frac{1}{4L_{\widetilde{N}+1}^2} + \frac{A_{\widetilde{N}}}{L_{\widetilde{N}+1}}} \leq_{{\tiny \circled{1}}} \frac{1}{L_{\widetilde{N}+1}} + \sqrt{\frac{A_{\widetilde{N}}}{L_{\widetilde{N}+1}}} \leq \frac{2}{L_{\widetilde{N}}} + \sqrt{\frac{2A_{\widetilde{N}}}{L_{\widetilde{N}}}} =\\= \frac{2}{L_{\widetilde{N}}} + \sqrt{2}\alpha_{\widetilde{N}}
\end{gather*}

{\small \circled{1}} - из $\sqrt{x + y} \leq \sqrt{x} + \sqrt{y}$.

и
\begin{gather*}
\alpha_{\widetilde{N}} = \frac{1}{2L_{\widetilde{N}}} + \sqrt{\frac{1}{4L_{\widetilde{N}}^2} + \frac{A_{\widetilde{N} - 1}}{L_{\widetilde{N}}}} \geq \frac{1}{2L_{\widetilde{N}}}
\end{gather*}

Отсюда

\begin{gather*}
\alpha_{\widetilde{N} + 1} \leq (4 + \sqrt{2})\alpha_{\widetilde{N}} \leq (4 + \sqrt{2})A_{\widetilde{N}} \leq 6A_{\widetilde{N}}
\end{gather*}

Так как $\frac{D_Q^2}{A_{\widetilde{N}}} > \epsilon$, это следует из минимальности $\widetilde{N} + 1$, то $\frac{D_Q^2}{A_{\widetilde{N} + 1}} = \frac{D_Q^2}{A_{\widetilde{N}} + \alpha_{\widetilde{N} + 1}} \geq \frac{D_Q^2}{A_{\widetilde{N}} + 6A_{\widetilde{N}}} > \frac{\epsilon}{7}$

и
\begin{gather*}
\sum_{k=0}^{\widetilde{N}}m_{k+1} \leq {\widetilde{N} + 1} + \frac{3D\widetilde{\Omega}}{\epsilon}A_{\widetilde{N} + 1} \leq {\widetilde{N} + 1} + \frac{21D\widetilde{\Omega}D_Q^2}{\epsilon^2}.
\end{gather*}

В конечном счете

\begin{gather*}
M  \leq \left(4 + \log\left(\frac{3L}{L_0}\right)\right)\left({\widetilde{N} + 1} + \frac{21D\widetilde{\Omega}D_Q^2}{\epsilon^2}\right) \leq\\\leq
\left(4 + \log\left(\frac{3L}{L_0}\right)\right)\left(\frac{2\sqrt{3}\sqrt{L}D_Q}{\sqrt{\epsilon}} + \frac{21D\widetilde{\Omega}D_Q^2}{\epsilon^2} + 1\right).
\end{gather*}

\end{remark}

\section{Спуск по направлению с неточным оракулом} \label{sec:rand}

\hspace{0.6cm}Пусть $f(x)$ удовлетворяет всем условиям из раздела. \ref{sec:mmt}. Определим следующие величины
\begin{definition}
\leavevmode
\label{defRand1}
\begin{enumerate}
\item
$e_{k+1}$ - случайный вектор на евклидовой сфере радиуса 1, который удовлетворяет следующим условиям: $\mathbb{E} e_{k+1}^i e_{k+1}^j = 0 \,\,\forall i,j$ и $\mathbb{E} (e_{k+1}^i)^2 = \frac{1}{n}\,\,\forall i$, где $e_{k+1}^i$ - $i$ компонента случайного вектора $e_{k+1}$.
\item
$\widetilde{\delta}_{k+1} \in \mathds{R}$ - случайный шум, про который только известно, что он ограничен: $\abs{\widetilde{\delta}_{k+1}} \leq \delta$.
\item
$\norm{x}^2_L \stackrel{def}{=} L\sum_{i = 1}^{n}x_i^2$.
\item
$\widetilde{\nabla}f(y) \stackrel{def}{=} n(\langle\nabla f(y), e_{k+1}\rangle + \widetilde{\delta}_{k+1}) e_{k+1}$ - это аппроксимация производной по направлению $e_{k+1}$.
\end{enumerate}
\end{definition}

 Будем предполагать, что оракул вместо $\nabla f(y)$ выдает $\widetilde{\nabla}f(y)$. Далее вместо произвольной нормы будем использовать $\norm{}_L$ и будем брать $d(x) = \frac{1}{2}\norm{x}_L^2$. Как следствие можно заметить, что $V(x, y) = d(x) - d(y) - \langle\nabla d(y), x - y\rangle = \frac{1}{2}\norm{x - y}^2_L$. Дополнительно будем считать, что $Q = \mathds{R}^n$.

Рассматривается следующая задача оптимизации
\begin{align}
\label{mainTask2}
f(x) \rightarrow \min_{x \in \mathds{R}^n}
\end{align}

Опишем алгоритм зеркального метода треугольника для оракула с производной по направлению.

\begin{mdframed}
\textbf{0 - шаг:}
\begin{gather}
x_0 = u_0 = y_0\\
\alpha_0 = 1 - \frac{1}{n}\\
A_0 = \alpha_0
\end{gather}
\textbf{$\boldsymbol{k+1}$ - шаг:}
\begin{gather}
y_{k+1} = \frac{\alpha_{k+1}u_k + A_k x_k}{A_{k+1}} \label{eqymir3}
\end{gather}

\begin{center}
Сгенерировать: $\widetilde{\nabla} f(y_{k+1})$.
\end{center}
\begin{gather}
\alpha_{k+1} = \frac{k + 2n}{2n^2}\\
A_{k+1} = A_k + \alpha_{k+1}\\
\phi_{k+1}(x) = V(x, u_k) + \alpha_{k+1}\langle\widetilde{\nabla} f(y_{k+1}), x \rangle\\
u_{k+1} = \argmin_{x \in \mathds{R}^n}\phi_{k+1}(x) \label{equmir3}\\
x_{k+1} = y_{k+1} + n\frac{\alpha_{k+1}}{A_{k+1}} (u_{k+1} - u_k) \label{eqxmir3}
\end{gather}

\end{mdframed}

\begin{lemma}
	\label{lemAa2_3}
	Пусть для последовательности $\alpha_k$ выполнено
	\begin{align*}
	\alpha_0 = 1 - \frac{1}{n}\\
	A_k = \sum_{i = 0}^{k}\alpha_i\\
	\alpha_i = \frac{i - 1 + 2n}{2n^2}
	\end{align*}
	Тогда верны следующие неравенства $\forall k \geq 1$

	\begin{gather*}
	A_k = \frac{(k - 1 + 2n)^2 + k - 1}{4n^2}\\
	A_k \geq n^2\alpha_k^2 = \frac{(k - 1 + 2n)^2}{4n^2}\\
	A_k \leq \frac{(k - 1 + 2n)^2}{2n^2}
	\end{gather*}
	 
\end{lemma}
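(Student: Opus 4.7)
Все три утверждения получатся прямым вычислением суммы $A_k$. Первым шагом подставим $\alpha_0 = \frac{n-1}{n}$ и $\alpha_i = \frac{i-1+2n}{2n^2}$ (при $i \geq 1$) в $A_k = \alpha_0 + \sum_{i=1}^{k}\alpha_i$ и посчитаем сумму арифметической прогрессии:
\begin{equation*}
A_k = \frac{n-1}{n} + \frac{1}{2n^2}\sum_{i=1}^{k}(i-1+2n) = \frac{n-1}{n} + \frac{k(k-1)}{4n^2} + \frac{k}{n}.
\end{equation*}
Приведя всё к общему знаменателю $4n^2$, получим числитель $4n^2 - 4n + k^2 - k + 4nk$. С другой стороны, раскрытие $(k-1+2n)^2 = k^2 - 2k + 1 + 4nk - 4n + 4n^2$ даёт $(k-1+2n)^2 + (k-1) = k^2 - k + 4nk - 4n + 4n^2$, что совпадает с полученным числителем. Это доказывает первое тождество.

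Второе неравенство следует немедленно: из уже доказанного представления $A_k = \frac{(k-1+2n)^2}{4n^2} + \frac{k-1}{4n^2}$, а добавка $\frac{k-1}{4n^2}$ неотрицательна при $k \geq 1$. Остаётся заметить, что $n^2\alpha_k^2 = n^2 \cdot \frac{(k-1+2n)^2}{4n^4} = \frac{(k-1+2n)^2}{4n^2}$, откуда и вытекает $A_k \geq n^2\alpha_k^2$.

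Третье неравенство сводится к проверке, что $k-1 \leq (k-1+2n)^2$: при $k = 1$ это $0 \leq 4n^2$, а при $k \geq 2$ имеем $(k-1+2n)^2 \geq (k-1)^2 \geq k-1$. Подставив это в формулу для $A_k$, получим $A_k \leq \frac{2(k-1+2n)^2}{4n^2} = \frac{(k-1+2n)^2}{2n^2}$.

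Серьёзных препятствий в доказательстве нет — всё сводится к аккуратной алгебре с квадратичными выражениями от $k$ и сравнению двух многочленов. Лемма имеет чисто техническое значение: верхняя и нижняя оценки на $A_k$ через $n^2\alpha_k^2$ потребуются далее при выводе скорости сходимости зеркального метода треугольника для оракула с производной по направлению (в частности, для контроля отношения $\alpha_k / A_k$, которое входит в формулу для $x_{k+1}$).
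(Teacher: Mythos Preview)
Your proof is correct and complete: the arithmetic-progression computation of $A_k$, the identification of the numerator with $(k-1+2n)^2+(k-1)$, and the two subsequent inequalities are all verified cleanly. The paper itself states this lemma without proof (it proceeds directly to the next lemma), so there is no original argument to compare against; your elementary verification is exactly the intended one.
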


При доказательстве основной теоремы нам потребуется следующая лемма, которая была доказана по аналогии с \cite{fercoq2015accelerated}.

\begin{lemma}
	\label{lemAa3}
	$\forall k\geq 0$ $x_{k+1}$, $y_{k+1}$ есть выпуклая комбинация $u_0 \dots u_{k+1}$. Причем $x_{k+1} = \sum_{l=0}^{k+1}\gamma_{k+1}^l u_{l}$, где $\gamma_0^0 = 1$, $\gamma_1^0 = 0$, $\gamma_0^1 = 1$ и для $k \geq 1$,
	\begin{gather*}
	\gamma_{k+1}^l =\begin{cases} (1 - \frac{\alpha_{k+1}}{A_{k+1}})\gamma_k^l, & l = 0,\dots,k - 1 \\ \frac{\alpha_{k+1}}{A_{k+1}} (1 - n \frac{\alpha_{k}}{A_{k}}) + n (\frac{\alpha_{k}}{A_{k}} - \frac{\alpha_{k+1}}{A_{k+1}}),& l = k \\
	n \frac{\alpha_{k+1}}{A_{k+1}},& l = k+1 \end{cases}
	\end{gather*}
\end{lemma}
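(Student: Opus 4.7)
The plan is to proceed by induction on $k$, establishing at each stage that $y_{k+1}$ and $x_{k+1}$ are convex combinations of $u_0, \dots, u_{k+1}$ with coefficients obeying the claimed recursion. The base case is immediate: $x_0 = u_0$ gives $\gamma_0^0 = 1$; substituting $x_0 = u_0$ into (\ref{eqymir3}) yields $y_1 = u_0$; and since $\alpha_0 = 1 - 1/n$ and $\alpha_1 = 1/n$ by the initialization and update rules give $A_1 = 1$ and $n\alpha_1/A_1 = 1$, formula (\ref{eqxmir3}) collapses to $x_1 = u_1$, matching $\gamma_1^0 = 0$ and $\gamma_1^1 = 1$.

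For the inductive step I would substitute the hypothesis $x_k = \sum_{l=0}^{k}\gamma_k^l u_l$ into (\ref{eqymir3}) to obtain
\[
y_{k+1} \;=\; \sum_{l=0}^{k-1}\tfrac{A_k}{A_{k+1}}\gamma_k^l\,u_l \;+\; \Bigl(\tfrac{A_k}{A_{k+1}}\gamma_k^k + \tfrac{\alpha_{k+1}}{A_{k+1}}\Bigr)u_k,
\]
which is immediately a convex combination of $u_0,\dots,u_k$. Plugging this into (\ref{eqxmir3}) then shifts the coefficient of $u_k$ by $-n\alpha_{k+1}/A_{k+1}$ and introduces $u_{k+1}$ with coefficient $n\alpha_{k+1}/A_{k+1}$. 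The identity $A_k/A_{k+1} = 1 - \alpha_{k+1}/A_{k+1}$ reproduces the recursion for $l \leq k-1$; using $\gamma_k^k = n\alpha_k/A_k$ (the inductive hypothesis at index $l = k$) and simplifying with $\alpha_{k+1} + A_k = A_{k+1}$ reproduces the formula for $\gamma_{k+1}^k$, while $\gamma_{k+1}^{k+1} = n\alpha_{k+1}/A_{k+1}$ emerges directly.

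The main obstacle is verifying that $\gamma_{k+1}^k \geq 0$: all other coefficients are nonnegative by inspection, and the weights sum to $1$ by a short computation that uses $\alpha_{k+1} + A_k = A_{k+1}$. For $\gamma_{k+1}^k \geq 0$ it suffices to check separately that $n\alpha_k/A_k \leq 1$ and $\alpha_k/A_k \geq \alpha_{k+1}/A_{k+1}$. Both are consequences of the closed forms in Lemma \ref{lemAa2_3}: the bound $A_k \geq n^2\alpha_k^2$ combined with $n\alpha_k = (k-1+2n)/(2n) \geq 1$ for $k\geq 1$ gives the first, while a direct cross-multiplication with the explicit expressions for $\alpha_k$ and $A_k$ (where the resulting polynomial difference works out to $m^2 + m + 2n \geq 0$ with $m = k - 1 + 2n$) settles the monotonicity needed for the second. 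With these two inequalities in hand, $\gamma_{k+1}^k$ is a sum of two nonnegative terms and the induction closes.
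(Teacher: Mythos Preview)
Your proposal is correct and follows essentially the same route as the paper: induction on $k$, expansion via (\ref{eqymir3}) and (\ref{eqxmir3}) using $\gamma_k^k = n\alpha_k/A_k$, and then nonnegativity of $\gamma_{k+1}^k$ from the two facts $n\alpha_k/A_k \le 1$ and $\alpha_k/A_k \ge \alpha_{k+1}/A_{k+1}$. In fact you go a little further than the paper, which merely asserts those two inequalities, by actually deriving them from Lemma~\ref{lemAa2_3}.
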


\begin{proof}

	Сначала отметим, что если $x_{k}$ есть выпуклая комбинация $u_0 \dots u_{k}$, то и $y_{k+1}$ - выпуклая комбинация $u_0 \dots u_{k}$, это следует из (\ref{eqymir3}).
	
	Докажем для $x_{k+1}$ теперь. Так как $x_0 = u_0$, то $\gamma_0^0 = 1$. Рассмотрим для $k = 0$. $x_1 = y_{1} + n\frac{\alpha_{1}}{A_{1}} (u_{1} - u_0) = u_0 + n\frac{\alpha_{1}}{A_{1}} (u_{1} - u_0) = (1 - n\frac{\alpha_{1}}{A_{1}}) u_0 + n\frac{\alpha_{1}}{A_{1}} u_{1}$. Так как $n\frac{\alpha_{1}}{A_{1}} = 1$, $x_1 = u_{1}$, $\gamma_1^0 = 0$ и $\gamma_1^0 = 1$.
	
	Пусть данное утверждение верно для $k$, докажем для $k+1$.
	
	\begin{gather*}
	x_{k+1} = y_{k+1} + n\frac{\alpha_{k+1}}{A_{k+1}} (u_{k+1} - u_k) = \frac{\alpha_{k+1}u_k + A_k x_k}{A_{k+1}} + n\frac{\alpha_{k+1}}{A_{k+1}} (u_{k+1} - u_k) = \\ =
	\frac{A_k}{A_{k+1}}x_k + (\frac{\alpha_{k+1}}{A_{k+1}} - n\frac{\alpha_{k+1}}{A_{k+1}})u_k + n\frac{\alpha_{k+1}}{A_{k+1}}u_{k+1}
	\end{gather*}
	
	Легко заметить, что коэффициенты при векторах в сумме дают $1$. Теперь распишем $x_k$, 
	
	\begin{gather*}
	x_{k+1} = 
	\frac{A_k}{A_{k+1}}x_k + (\frac{\alpha_{k+1}}{A_{k+1}} - n\frac{\alpha_{k+1}}{A_{k+1}})u_k + n\frac{\alpha_{k+1}}{A_{k+1}}u_{k+1} = \\ =
	(1 - \frac{\alpha_{k+1}}{A_{k+1}}) \sum_{l = 0}^{k} \gamma_k^l u_l + (\frac{\alpha_{k+1}}{A_{k+1}} - n\frac{\alpha_{k+1}}{A_{k+1}})u_k + n\frac{\alpha_{k+1}}{A_{k+1}}u_{k+1} = \\ =
	(1 - \frac{\alpha_{k+1}}{A_{k+1}}) \sum_{l = 0}^{k-1} \gamma_k^l u_l + \Big(\gamma_k^k (1 - \frac{\alpha_{k+1}}{A_{k+1}}) + (\frac{\alpha_{k+1}}{A_{k+1}} - n\frac{\alpha_{k+1}}{A_{k+1}})\Big)u_k + n\frac{\alpha_{k+1}}{A_{k+1}}u_{k+1} = \\=
	(1 - \frac{\alpha_{k+1}}{A_{k+1}}) \sum_{l = 0}^{k-1} \gamma_k^l u_l + \Big(n \frac{\alpha_{k}}{A_{k}} (1 - \frac{\alpha_{k+1}}{A_{k+1}}) + (\frac{\alpha_{k+1}}{A_{k+1}} - n\frac{\alpha_{k+1}}{A_{k+1}})\Big)u_k + n\frac{\alpha_{k+1}}{A_{k+1}}u_{k+1} = \\=
	(1 - \frac{\alpha_{k+1}}{A_{k+1}}) \sum_{l = 0}^{k-1} \gamma_k^l u_l + \Big(\frac{\alpha_{k+1}}{A_{k+1}} (1 - n\frac{\alpha_{k}}{A_{k}}) + n(\frac{\alpha_{k}}{A_{k}} -\frac{\alpha_{k+1}}{A_{k+1}})\Big)u_k + n\frac{\alpha_{k+1}}{A_{k+1}}u_{k+1}
	\end{gather*}
	
	Осталось показать, что $\gamma_{k+1}^l \geq 0$, $l = 0,\dots,k+1$.
	
	Легко увидеть, что это верно для $\gamma_{k+1}^l$, $l = 0,\dots,k-1$ и $\gamma_{k+1}^{k+1}$. $\gamma_{k+1}^{k} = \frac{\alpha_{k+1}}{A_{k+1}} (1 - n\frac{\alpha_{k}}{A_{k}}) + n(\frac{\alpha_{k}}{A_{k}} -\frac{\alpha_{k+1}}{A_{k+1}})$, так как $\frac{\alpha_{k}}{A_{k}} \geq \frac{\alpha_{k+1}}{A_{k+1}}$ и $\frac{\alpha_{k}}{A_{k}} \leq \frac{1}{n}$ для $k \geq 1$, получаем $\gamma_{k+1}^{k} \geq 0$. 
	
\end{proof}

\begin{lemma}
	$\forall u \in \mathds{R}^n$ выполнено
	\begin{gather*}
	\alpha_{k+1}\langle \widetilde{\nabla} f(y_{k+1}), u_k - u\rangle \leq A_{k+1}(f(y_{k+1}) - f(x_{k+1})) + V(u, u_k) - V(u, u_{k+1}) +\\
	+ \frac{A_{k+1}}{L}\langle\nabla f(y_{k+1}), \widetilde{\delta}_{k+1} e_{k+1}\rangle + \frac{A_{k+1}}{L} \widetilde{\delta}_{k+1}^2
	\end{gather*}
	\label{lemAZ1_3}
\end{lemma}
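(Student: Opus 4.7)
The plan is to combine the three-point prox inequality from Lemma \ref{lemma_maxmin_2} with the $L$-smoothness of $f$, exploiting the geometric fact that, in the Euclidean setting, the prox step \eqref{equmir3} has the closed form $u_{k+1} = u_k - \tfrac{\alpha_{k+1}}{L}\widetilde{\nabla} f(y_{k+1})$, so that both $u_{k+1}-u_k$ and $x_{k+1}-y_{k+1} = \tfrac{n\alpha_{k+1}}{A_{k+1}}(u_{k+1}-u_k)$ are parallel to $e_{k+1}$. This one-dimensional structure is what makes it possible to freely translate between $\nabla f(y_{k+1})$ and $\widetilde{\nabla} f(y_{k+1})$ in inner products.

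First I would split the left-hand side as $\alpha_{k+1}\langle \widetilde{\nabla} f(y_{k+1}), u_k - u\rangle = \alpha_{k+1}\langle \widetilde{\nabla} f(y_{k+1}), u_k - u_{k+1}\rangle + \alpha_{k+1}\langle \widetilde{\nabla} f(y_{k+1}), u_{k+1} - u\rangle$, and apply Lemma \ref{lemma_maxmin_2} with $\psi(x) = \alpha_{k+1}\langle \widetilde{\nabla} f(y_{k+1}), x\rangle$, $y = u_{k+1}$, $z = u_k$ to the second summand, yielding
\[
\alpha_{k+1}\langle \widetilde{\nabla} f(y_{k+1}), u_{k+1} - u\rangle \leq V(u, u_k) - V(u, u_{k+1}) - V(u_{k+1}, u_k).
\]

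For the first summand I would start from $L$-smoothness, $f(x_{k+1}) \leq f(y_{k+1}) + \langle \nabla f(y_{k+1}), x_{k+1} - y_{k+1}\rangle + \tfrac{L}{2}\norm{x_{k+1} - y_{k+1}}_2^2$, multiply by $A_{k+1}$, substitute the algorithmic identity $x_{k+1} - y_{k+1} = \tfrac{n\alpha_{k+1}}{A_{k+1}}(u_{k+1} - u_k)$, and rewrite the $\nabla f$ inner product via the elementary equality
\[
n\langle \nabla f(y_{k+1}), v\rangle = \langle \widetilde{\nabla} f(y_{k+1}), v\rangle - n\widetilde{\delta}_{k+1}\langle e_{k+1}, v\rangle,\qquad v \parallel e_{k+1},
\]
which is an immediate consequence of Definition \ref{defRand1}. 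Taking $v = u_k - u_{k+1}$ produces
\[
\alpha_{k+1}\langle \widetilde{\nabla} f(y_{k+1}), u_k - u_{k+1}\rangle \leq A_{k+1}(f(y_{k+1}) - f(x_{k+1})) + n\alpha_{k+1}\widetilde{\delta}_{k+1}\langle e_{k+1}, u_k - u_{k+1}\rangle + \tfrac{n^2\alpha_{k+1}^2}{A_{k+1}}V(u_{k+1}, u_k).
\]

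Adding the two bounds, the two $V(u_{k+1}, u_k)$ contributions collapse into $\bigl(\tfrac{n^2\alpha_{k+1}^2}{A_{k+1}} - 1\bigr)V(u_{k+1}, u_k)$, which is non-positive by the estimate $A_{k+1} \geq n^2\alpha_{k+1}^2$ from Lemma \ref{lemAa2_3}, so it can be dropped. Finally, substituting the closed form $u_k - u_{k+1} = \tfrac{n\alpha_{k+1}(\langle\nabla f(y_{k+1}), e_{k+1}\rangle + \widetilde{\delta}_{k+1})}{L}e_{k+1}$ into $\langle e_{k+1}, u_k - u_{k+1}\rangle$ converts the remaining cross-term into $\tfrac{n^2\alpha_{k+1}^2}{L}\widetilde{\delta}_{k+1}\bigl(\langle\nabla f(y_{k+1}), e_{k+1}\rangle + \widetilde{\delta}_{k+1}\bigr)$, and invoking $n^2\alpha_{k+1}^2 \leq A_{k+1}$ once more upgrades this to the claimed $\tfrac{A_{k+1}}{L}\langle\nabla f(y_{k+1}), \widetilde{\delta}_{k+1}e_{k+1}\rangle + \tfrac{A_{k+1}}{L}\widetilde{\delta}_{k+1}^2$.

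The main obstacle is the bookkeeping when translating between $\nabla f$ and $\widetilde{\nabla} f$: the identity above is tight only along the direction $e_{k+1}$, and one must verify that the dimensional factor $n$ in the algorithmic update of $x_{k+1}$ exactly balances the scaling $\widetilde{\nabla} f \sim n(g+\widetilde{\delta}_{k+1})e_{k+1}$ so that the residual $V(u_{k+1}, u_k)$ terms are absorbed using Lemma \ref{lemAa2_3} rather than left over.
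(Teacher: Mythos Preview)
Your proposal is correct and follows essentially the same route as the paper: split at $u_{k+1}$, use the three-point prox inequality for the $u_{k+1}-u$ piece, exploit the closed form $u_{k+1}=u_k-\tfrac{\alpha_{k+1}}{L}\widetilde{\nabla}f(y_{k+1})$ together with $x_{k+1}-y_{k+1}=\tfrac{n\alpha_{k+1}}{A_{k+1}}(u_{k+1}-u_k)$ for the $u_k-u_{k+1}$ piece, and absorb the quadratic via $A_{k+1}\ge n^2\alpha_{k+1}^2$. The only cosmetic difference is that you invoke $L$-smoothness first and then translate $\nabla f\to\widetilde{\nabla}f$ along $e_{k+1}$, whereas the paper first computes $\alpha_{k+1}\langle\widetilde{\nabla}f(y_{k+1}),u_k-u_{k+1}\rangle=\tfrac{n^2\alpha_{k+1}^2}{L}(\langle\nabla f(y_{k+1}),e_{k+1}\rangle+\widetilde{\delta}_{k+1})^2$ directly and applies smoothness at the end; the error terms and the final replacement $n^2\alpha_{k+1}^2\mapsto A_{k+1}$ are identical.
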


\begin{proof}
	\begin{gather*}
	\alpha_{k+1}\langle \widetilde{\nabla} f(y_{k+1}), u_k - u\rangle = \\=
	\alpha_{k+1}\langle \widetilde{\nabla} f(y_{k+1}), u_k - u_{k+1}\rangle + \alpha_{k+1}\langle \widetilde{\nabla} f(y_{k+1}), u_{k+1} - u\rangle \leq_{{\tiny \circled{1}}} \\\leq
	\alpha_{k+1}\langle \widetilde{\nabla} f(y_{k+1}), u_k - u_{k+1}\rangle + 
	\langle -\nabla V(u_{k+1}, u_k), u_{k+1} - u \rangle = \\=
	 \alpha_{k+1}\langle \widetilde{\nabla} f(y_{k+1}), u_k - u_{k+1}\rangle + V(u, u_k) - V(u, u_{k+1}) - V(u_{k+1}, u_k) \leq \\\leq
	 \alpha_{k+1}\langle \widetilde{\nabla} f(y_{k+1}), u_k - u_{k+1}\rangle + V(u, u_k) - V(u, u_{k+1}) - \frac{1}{2}\norm{u_k - u_{k+1}}^2_L \leq_{{\tiny \circled{2}}} \\\leq
	 A_{k+1}\langle \nabla f(y_{k+1}), y_{k+1} - x_{k+1}\rangle + V(u, u_k) - V(u, u_{k+1}) -\\- \frac{A_{k+1}^2}{2n^2\alpha_{k+1}^2}\norm{y_{k+1} - x_{k+1}}^2_L+ \frac{A_{k+1}}{L}\langle\nabla f(y_{k+1}), \widetilde{\delta}_{k+1} e_{k+1}\rangle + \frac{A_{k+1}}{L} \widetilde{\delta}_{k+1}^2 \leq \\\leq
	 A_{k+1}(\langle \nabla f(y_{k+1}), y_{k+1} - x_{k+1}\rangle - \frac{1}{2}\norm{y_{k+1} - x_{k+1}}^2_L) + V(u, u_k) - V(u, u_{k+1}) +\\+ \frac{A_{k+1}}{L}\langle\nabla f(y_{k+1}), \widetilde{\delta}_{k+1} e_{k+1}\rangle + \frac{A_{k+1}}{L} \widetilde{\delta}_{k+1}^2 \leq_{{\tiny \circled{3}}} \\\leq
	 A_{k+1}(f(y_{k+1}) - f(x_{k+1})) + V(u, u_k) - V(u, u_{k+1}) +\\+ \frac{A_{k+1}}{L}\langle\nabla f(y_{k+1}), \widetilde{\delta}_{k+1} e_{k+1}\rangle + \frac{A_{k+1}}{L} \widetilde{\delta}_{k+1}^2
	\end{gather*}

{\small \circled{1}} - из (\ref{equmir3})

{\small \circled{2}} - из:

Так как $\phi_{k+1}(x)$ сильно выпуклая и оптимизация происходит на $\mathds{R}^n$, то
\begin{gather*}
	\nabla \phi_{k+1}(u_{k+1}) = 0 \\
	u_{k+1} = u_{k} - \frac{\alpha_{k+1}}{L} \widetilde{\nabla}f(y_{k+1})
\end{gather*}

и

\begin{gather*}
	y_{k+1} - x_{k+1} = n\frac{\alpha_{k+1}}{A_{k+1}} (u_{k} - u_{k+1}) = \\=
	n\frac{\alpha_{k+1}^2}{A_{k+1}L} \widetilde{\nabla}f(y_{k+1}) =\\=
	n^2\frac{\alpha_{k+1}^2}{A_{k+1}L} (\langle\nabla f(y_{k+1}), e_{k+1}\rangle  + \widetilde{\delta}_{k+1})e_{k+1}
\end{gather*}

Отсюда

\begin{gather*}
	\alpha_{k+1}\langle \widetilde{\nabla} f(y_{k+1}), u_k - u_{k+1}\rangle = \\=
	\frac{\alpha_{k+1}}{L}\langle n (\langle\nabla f(y_{k+1}), e_{k+1}\rangle + \widetilde{\delta}_{k+1}) e_{k+1}, \alpha_{k+1} n (\langle\nabla f(y_{k+1}), e_{k+1}\rangle  + \widetilde{\delta}_{k+1})e_{k+1} \rangle = \\=
	\frac{\alpha_{k+1}^2 n^2}{L}(\langle\nabla f(y_{k+1}), e_{k+1}\rangle + \widetilde{\delta}_{k+1})^2 \langle e_{k+1}, e_{k+1}\rangle = \\=
	\frac{\alpha_{k+1}^2 n^2}{L}\langle\nabla f(y_{k+1}), e_{k+1}\rangle(\langle\nabla f(y_{k+1}), e_{k+1}\rangle + \widetilde{\delta}_{k+1}) +\\+ \frac{\alpha_{k+1}^2 n^2}{L}\widetilde{\delta}_{k+1}(\langle\nabla f(y_{k+1}), e_{k+1}\rangle + \widetilde{\delta}_{k+1})= \\=
	\frac{\alpha_{k+1}^2 n^2}{L}\langle\nabla f(y_{k+1}), (\langle\nabla f(y_{k+1}), e_{k+1}\rangle + \widetilde{\delta}_{k+1})e_{k+1}\rangle +\\+ \frac{\alpha_{k+1}^2 n^2}{L}\widetilde{\delta}_{k+1}(\langle\nabla f(y_{k+1}), e_{k+1}\rangle + \widetilde{\delta}_{k+1})= \\=
	A_{k+1}\langle\nabla f(y_{k+1}), y_{k+1} - x_{k+1}\rangle + \frac{\alpha_{k+1}^2 n^2}{L}\langle\nabla f(y_{k+1}), \widetilde{\delta}_{k+1} e_{k+1}\rangle + \frac{\alpha_{k+1}^2 n^2}{L} \widetilde{\delta}_{k+1}^2 \leq \\ \leq
	A_{k+1}\langle\nabla f(y_{k+1}), y_{k+1} - x_{k+1}\rangle + \frac{A_{k+1}}{L}\langle\nabla f(y_{k+1}), \widetilde{\delta}_{k+1} e_{k+1}\rangle + \frac{A_{k+1}}{L} \widetilde{\delta}_{k+1}^2
\end{gather*}

{\small \circled{3}} - из Липщевости
\end{proof}
Пусть $\mathbb{E}_{k}$ - условное математическое ожидание по $k$ итерации относительно $1, ..., k-1$ итерации. $R_k = \norm{u_k - u}_L$, $M_k = \norm{\nabla f(y_{k+1})}_2$.

\begin{corollary}
$\forall u \in \mathds{R}^n$ выполнено
	\begin{gather*}
	\alpha_{k+1}\langle \nabla f(y_{k+1}), u_k - u\rangle \leq A_{k+1}(f(y_{k+1}) - \mathbb{E}_{k+1} f(x_{k+1})) + V(u, u_k) -\\- \mathbb{E}_{k+1} V(u, u_{k+1}) + \frac{A_{k+1}}{L}\delta^2 +
		A_{k+1}\delta \frac{M_k}{L\sqrt{n}} + \alpha_{k+1} \delta \frac{\sqrt{n}}{\sqrt{L}} R_k
	\end{gather*}
	\label{corMailLemma}
\end{corollary}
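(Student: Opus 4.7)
План доказательства — взять условное математическое ожидание $\mathbb{E}_{k+1}$ от обеих частей неравенства Леммы \ref{lemAZ1_3} и перейти в итоговом неравенстве от стохастического оракула $\widetilde{\nabla} f(y_{k+1})$ к истинному градиенту $\nabla f(y_{k+1})$. Ключевую роль играет ковариационное свойство $\mathbb{E}_{k+1}[e_{k+1}e_{k+1}^{T}] = \frac{1}{n}I$, вытекающее из Определения \ref{defRand1}: для любого неслучайного (относительно $k+1$-го шага) вектора $v$ справедливо $n\,\mathbb{E}_{k+1}[\langle \nabla f(y_{k+1}), e_{k+1}\rangle \langle e_{k+1}, v\rangle] = \langle \nabla f(y_{k+1}), v\rangle$.

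Первым шагом раскрою формулу $\widetilde{\nabla} f(y_{k+1}) = n(\langle \nabla f(y_{k+1}), e_{k+1}\rangle + \widetilde{\delta}_{k+1})\, e_{k+1}$ в левой части Леммы \ref{lemAZ1_3} и вычислю $\mathbb{E}_{k+1}$, используя указанное свойство:
\begin{gather*}
\mathbb{E}_{k+1}\langle \widetilde{\nabla} f(y_{k+1}), u_k - u\rangle = \langle \nabla f(y_{k+1}), u_k - u\rangle + n\,\mathbb{E}_{k+1}[\widetilde{\delta}_{k+1}\langle e_{k+1}, u_k - u\rangle].
\end{gather*}
Перенесу второе слагаемое в правую часть полученного условного неравенства. В правой части Леммы \ref{lemAZ1_3} взятие $\mathbb{E}_{k+1}$ не затрагивает $y_{k+1}$, $u_k$ и $f(y_{k+1})$, а $f(x_{k+1})$ и $V(u, u_{k+1})$ заменяются своими условными ожиданиями. Таким образом, остаётся оценить сверху три шумовых слагаемых.

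Из $|\widetilde{\delta}_{k+1}| \leq \delta$ сразу получаю $\frac{A_{k+1}}{L}\mathbb{E}_{k+1}\widetilde{\delta}_{k+1}^{2} \leq \frac{A_{k+1}}{L}\delta^{2}$. Для двух оставшихся применю неравенство Коши-Буняковского в сочетании с тождеством $\mathbb{E}_{k+1}\langle g, e_{k+1}\rangle^{2} = \frac{1}{n}\norm{g}_2^{2}$:
\begin{gather*}
\frac{A_{k+1}}{L}\,\mathbb{E}_{k+1}[\widetilde{\delta}_{k+1}\langle \nabla f(y_{k+1}), e_{k+1}\rangle] \leq \frac{A_{k+1}}{L}\cdot\delta\cdot\frac{M_k}{\sqrt{n}} = A_{k+1}\delta\frac{M_k}{L\sqrt{n}},\\
\alpha_{k+1}\, n\,\bigl|\mathbb{E}_{k+1}[\widetilde{\delta}_{k+1}\langle e_{k+1}, u_k - u\rangle]\bigr| \leq \alpha_{k+1}\, n \cdot \delta \cdot \frac{\norm{u_k - u}_2}{\sqrt{n}} = \alpha_{k+1}\,\delta\,\frac{\sqrt{n}}{\sqrt{L}}R_k,
\end{gather*}
где последнее равенство использует $\norm{u_k - u}_2 = R_k/\sqrt{L}$ в силу определения нормы $\norm{\cdot}_L$. Складывая все оценки с результатом первого шага, получу в точности заявленное неравенство следствия.

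Основная техническая тонкость — аккуратно следить за степенями $n$ в шумовых слагаемых. Именно ковариационное соотношение $\mathbb{E}_{k+1}[e_{k+1}e_{k+1}^{T}] = \frac{1}{n}I$, применённое в Коши-Буняковского, даёт сокращение $\frac{1}{\sqrt{n}}$, которое критично для получения разумной скорости сходимости безградиентного метода — без него оценка оказалась бы на множитель $\sqrt{n}$ хуже.
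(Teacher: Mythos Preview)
Your proposal is correct and follows essentially the same route as the paper: take $\mathbb{E}_{k+1}$ of the inequality in Lemma~\ref{lemAZ1_3}, use $\mathbb{E}_{k+1}[e_{k+1}e_{k+1}^{T}]=\tfrac{1}{n}I$ to recover the true gradient in the left-hand side, move the residual noise term $n\,\mathbb{E}_{k+1}[\widetilde{\delta}_{k+1}\langle e_{k+1},u_k-u\rangle]$ to the right, and bound the three noise contributions. The only cosmetic difference is that the paper first pulls out $|\widetilde{\delta}_{k+1}|\le\delta$ and then applies Jensen's inequality $\mathbb{E}|Z|\le\sqrt{\mathbb{E}Z^2}$, whereas you apply Cauchy--Schwarz for expectations directly; both yield the identical constants.
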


\begin{proof}
	Возьмем $\mathbb{E}_{k+1}$ от обеих частей неравенства леммы \ref{lemAZ1_3}
	\begin{gather*}
	\mathbb{E}_{k+1}\alpha_{k+1}\langle \widetilde{\nabla} f(y_{k+1}), u_k - u\rangle \leq \mathbb{E}_{k+1}A_{k+1}(f(y_{k+1}) - f(x_{k+1})) +\\+ \mathbb{E}_{k+1}V(u, u_k) - \mathbb{E}_{k+1}V(u, u_{k+1}) + \frac{A_{k+1}}{L}\mathbb{E}_{k+1}\langle\nabla f(y_{k+1}), \widetilde{\delta}_{k+1} e_{k+1}\rangle + \frac{A_{k+1}}{L} \mathbb{E}_{k+1}\widetilde{\delta}_{k+1}^2
	\end{gather*}
	
	Воспользуемся тем, что 
	\begin{gather*}
	\mathbb{E}_{k+1} \langle n\langle\nabla f(y_{k+1}), e_{k+1}\rangle e_{k+1}, u_k - u\rangle = n\langle \mathbb{E}_{k+1}\langle\nabla f(y_{k+1}), e_{k+1}\rangle e_{k+1}, u_k - u\rangle =_{{\tiny \circled{1}}}\\= \langle \nabla f(y_{k+1}), u_k - u\rangle
	\end{gather*}
	
	{\small \circled{1}} - из определения \ref{defRand1}.1
	
	Тогда
	\begin{gather*}
	\alpha_{k+1}\langle \nabla f(y_{k+1}), u_k - u\rangle + \mathbb{E}_{k+1}\alpha_{k+1}\langle n\widetilde{\delta}_{k+1} e_{k+1} , u_k - u\rangle \leq\\\leq
	A_{k+1}(f(y_{k+1}) - \mathbb{E}_{k+1}f(x_{k+1})) + V(u, u_k) - \mathbb{E}_{k+1}V(u, u_{k+1}) +\\+ \frac{A_{k+1}}{L}\mathbb{E}_{k+1}\langle\nabla f(y_{k+1}), \widetilde{\delta}_{k+1} e_{k+1}\rangle + \frac{A_{k+1}}{L} \mathbb{E}_{k+1}\widetilde{\delta}_{k+1}^2
	\end{gather*}
	
	Используя условия из определения \ref{defRand1} на $e_{k+1}$ и $\widetilde{\delta}_{k+1}$, можно показать, что
	\begin{gather*}
	\mathbb{E}_{k+1}\langle\nabla f(y_{k+1}), \widetilde{\delta}_{k+1} e_{k+1}\rangle \leq \delta\mathbb{E}_{k+1}|\langle\nabla f(y_{k+1}),  e_{k+1}\rangle| = \delta\mathbb{E}_{k+1}\sqrt{\langle\nabla f(y_{k+1}),  e_{k+1}\rangle^2} \leq \\ \leq
	\delta \sqrt{\mathbb{E}_{k+1}\langle\nabla f(y_{k+1}), e_{k+1}\rangle^2} = 
	\delta \sqrt{(\nabla f(y_{k+1}))^T \mathbb{E}_{k+1} e_{k+1} e_{k+1}^T \nabla f(y_{k+1})} = \\ =
	\frac{\delta}{\sqrt{n}} \norm{\nabla f(y_{k+1})}_2 = \delta\frac{M_k}{\sqrt{n}}
	\end{gather*}
	
	Аналогично
	\begin{gather*}
	\mathbb{E}_{k+1}\langle n\widetilde{\delta}_{k+1} e_{k+1} , u_k - u\rangle \geq
	-\delta n \mathbb{E}_{k+1}|\langle e_{k+1} , u_k - u\rangle| \geq \\ \geq
	-\delta n \sqrt{\mathbb{E}_{k+1} \langle e_{k+1} , u_k - u\rangle^2} =
	-\delta \sqrt{n} \norm{u_k - u}_2 = -\frac{\delta \sqrt{n}}{\sqrt{L}} R_k
	\end{gather*} 
	В конечном счете получим, что
	\begin{gather*}
	\alpha_{k+1}\langle \nabla f(y_{k+1}), u_k - u\rangle \leq
	A_{k+1}(f(y_{k+1}) - \mathbb{E}_{k+1}f(x_{k+1})) +\\+ V(u, u_k) - \mathbb{E}_{k+1}V(u, u_{k+1}) + \frac{A_{k+1}}{L}\delta^2 +
	A_{k+1}\delta \frac{M_k}{L\sqrt{n}} + \alpha_{k+1} \frac{\delta \sqrt{n}}{\sqrt{L}} R_k
	\end{gather*}
\end{proof}

\begin{lemma}
	$\forall u \in \mathds{R}^n$ выполнено
	\begin{gather*}
		A_{k+1} \mathbb{E}_{k+1}f(x_{k+1}) - A_{k} f(x_{k}) + \mathbb{E}_{k+1}V(u, u_{k+1}) - V(u, u_{k}) \leq \alpha_{k+1}f(u)+\\+ \frac{A_{k+1}}{L}\delta^2 +
				A_{k+1}\delta \frac{M_k}{L\sqrt{n}} + \alpha_{k+1} \frac{\delta \sqrt{n}}{\sqrt{L}} R_k
	\end{gather*}
	\label{lemAZ2_3}
\end{lemma}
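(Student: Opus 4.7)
План состоит в том, чтобы воспользоваться Следствием \ref{corMailLemma} и преобразовать его левую часть $\alpha_{k+1}\langle\nabla f(y_{k+1}), u_k - u\rangle$ через значения $f$ в точках $x_k$, $y_{k+1}$ и $u$. Следствие уже содержит все шумовые слагаемые из правой части утверждаемого неравенства, поэтому вся работа сводится к получению такой нижней оценки $\alpha_{k+1}\langle\nabla f(y_{k+1}), u_k - u\rangle$, в которой появится нужная разность $A_{k+1} f(y_{k+1}) - A_k f(x_k)$; после подстановки член $A_{k+1} f(y_{k+1})$ сократится с соответствующим слагаемым в правой части Следствия и останется ровно $A_{k+1}\mathbb{E}_{k+1} f(x_{k+1}) - A_k f(x_k)$.

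Шаг 1 — геометрическое тождество. Из определения (\ref{eqymir3}) и равенства $A_{k+1} = A_k + \alpha_{k+1}$ непосредственно выводится
\begin{gather*}
\alpha_{k+1}(u_k - y_{k+1}) = A_k(y_{k+1} - x_k),
\end{gather*}
после чего
\begin{gather*}
\alpha_{k+1}\langle \nabla f(y_{k+1}), u_k - u\rangle = A_k\langle \nabla f(y_{k+1}), y_{k+1} - x_k\rangle + \alpha_{k+1}\langle \nabla f(y_{k+1}), y_{k+1} - u\rangle.
\end{gather*}

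Шаг 2 — выпуклость $f$. Применяя неравенство $f(z) \geq f(y_{k+1}) + \langle\nabla f(y_{k+1}), z - y_{k+1}\rangle$ при $z = x_k$ и $z = u$, оцениваю оба скалярных произведения снизу и группирую коэффициенты при $A_k$, $\alpha_{k+1}$ и $A_{k+1} = A_k + \alpha_{k+1}$:
\begin{gather*}
\alpha_{k+1}\langle \nabla f(y_{k+1}), u_k - u\rangle \geq A_{k+1} f(y_{k+1}) - A_k f(x_k) - \alpha_{k+1} f(u).
\end{gather*}

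Шаг 3 — сведение. Подставляю полученную нижнюю оценку в верхнюю оценку из Следствия \ref{corMailLemma}, сокращаю $A_{k+1} f(y_{k+1})$ в обеих частях и переношу слагаемые $A_k f(x_k)$, $\alpha_{k+1} f(u)$, $V(u, u_k)$ и $\mathbb{E}_{k+1} V(u, u_{k+1})$ в нужные стороны; оставшиеся шумовые члены $\frac{A_{k+1}}{L}\delta^2$, $A_{k+1}\delta\frac{M_k}{L\sqrt{n}}$ и $\alpha_{k+1}\frac{\delta\sqrt{n}}{\sqrt{L}} R_k$ переходят без изменений. Единственное нетривиальное место — это корректное использование тождества $\alpha_{k+1}(u_k - y_{k+1}) = A_k(y_{k+1} - x_k)$, играющего здесь ту же роль, что и в классическом анализе быстрого градиентного метода; всё остальное — стандартная бухгалтерия коэффициентов, и никаких дополнительных вероятностных аргументов не требуется, так как усреднение уже учтено в Следствии.
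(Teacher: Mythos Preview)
Your proof is correct and follows essentially the same route as the paper: the paper also starts from convexity at $u$, uses the identity $\alpha_{k+1}(y_{k+1}-u_k)=A_k(x_k-y_{k+1})$ derived from (\ref{eqymir3}) to convert $\alpha_{k+1}\langle\nabla f(y_{k+1}),y_{k+1}-u_k\rangle$ into $A_k\langle\nabla f(y_{k+1}),x_k-y_{k+1}\rangle$, applies convexity at $x_k$, and then invokes Corollary~\ref{corMailLemma}. The only difference is presentational --- the paper writes the argument as a single chain of inequalities starting from $\alpha_{k+1}(f(y_{k+1})-f(u))$, whereas you first isolate the lower bound on $\alpha_{k+1}\langle\nabla f(y_{k+1}),u_k-u\rangle$ and then combine it with the upper bound from the Corollary --- but the steps and ingredients are identical.
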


\begin{proof}
	\begin{gather*}
		\alpha_{k+1}(f(y_{k+1}) - f(u)) \leq \\ \leq
		\alpha_{k+1}\langle \nabla f(y_{k+1}), y_{k+1} - u\rangle = \\ =
		\alpha_{k+1}\langle \nabla f(y_{k+1}), y_{k+1} - u_k\rangle + 
		\alpha_{k+1}\langle \nabla f(y_{k+1}), u_k - u\rangle =_{{\tiny \circled{1}}} \\ =
		A_{k}\langle \nabla f(y_{k+1}), x_{k} - y_{k+1}\rangle + 
		\alpha_{k+1}\langle \nabla f(y_{k+1}), u_k - u\rangle \leq \\ \leq
		A_{k}(f(x_k) - f(y_{k+1})) + \alpha_{k+1}\langle \nabla f(y_{k+1}), u_k - u\rangle \leq_{{\tiny \circled{2}}} \\ \leq
		A_{k}(f(x_k) - f(y_{k+1})) + A_{k+1}(f(y_{k+1}) - \mathbb{E}_{k+1}f(x_{k+1})) +\\+ V(u, u_k) - \mathbb{E}_{k+1}V(u, u_{k+1}) + \frac{A_{k+1}}{L}\delta^2 +
			A_{k+1}\delta \frac{M_k}{L\sqrt{n}} + \alpha_{k+1} \frac{\delta \sqrt{n}}{\sqrt{L}} R_k
	\end{gather*}
	
	{\small \circled{1}} - из (\ref{eqymir3})
	
	{\small \circled{2}} - из Следствия \ref{corMailLemma}
	
	То есть 
	\begin{gather*}
		\alpha_{k+1}(f(y_{k+1}) - f(u)) \leq \\ \leq
		A_{k}(f(x_k) - f(y_{k+1})) + A_{k+1}(f(y_{k+1}) - \mathbb{E}_{k+1}f(x_{k+1})) +\\+ V(u, u_k) - \mathbb{E}_{k+1}V(u, u_{k+1}) + \frac{A_{k+1}}{L}\delta^2 +
			A_{k+1}\delta \frac{M_k}{L\sqrt{n}} + \alpha_{k+1} \frac{\delta \sqrt{n}}{\sqrt{L}} R_k
	\end{gather*}
	
	Отсюда получаем утверждение леммы.
	
\end{proof}

\begin{definition}
$\frac{1}{2}P_0^2 = \frac{1}{2}R_0^2 + (1 - \frac{1}{n}) (f(x_0) - f_*)$
\end{definition}

\begin{theorem}
Пусть $\epsilon$ фиксирована и выполнено
	\label{mainTheorem3}
	\begin{gather*}
	N = \ceil{\frac{\sqrt{2}nP_0}{\sqrt{\epsilon}} + 1 - 2n}\\
	\delta \leq \min\Bigg\{\frac{\epsilon^{\frac{3}{4}}\sqrt{L}}{4\sqrt[4]{2}\sqrt{nP_0}},
	 \frac{\epsilon^{\frac{3}{2}}\sqrt{L}}{96\sqrt{n}P_0^2}\Bigg\},
	\end{gather*} тогда
	\begin{gather*}
	\mathbb{E}f(x_N) - f(x_*) \leq 3\epsilon
	\end{gather*}
\end{theorem}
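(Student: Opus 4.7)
The plan is to telescope the one-step guarantee of Lemma~\ref{lemAZ2_3} and then bootstrap a self-referential bound on the Lyapunov potential $\Psi_k := A_k(f(x_k)-f_*) + V(x_*,u_k)$. Setting $u = x_*$ in Lemma~\ref{lemAZ2_3} and using $\alpha_{k+1} = A_{k+1}-A_k$, its one-step estimate rewrites as
\[
\mathbb{E}_{k+1}\Psi_{k+1} - \Psi_k \;\leq\; \frac{A_{k+1}\delta^2}{L} + \frac{A_{k+1}\delta}{L\sqrt{n}}M_k + \frac{\alpha_{k+1}\delta\sqrt{n}}{\sqrt{L}}R_k.
\]
Taking total expectation, summing $k=0,\dots,N-1$, and noting that $\Psi_0 = (1-1/n)(f(x_0)-f_*)+\tfrac12 R_0^2 = \tfrac12 P_0^2$ by the definition of $P_0$, I would obtain
\[
\mathbb{E}\Psi_N \;\leq\; \tfrac12 P_0^2 + \tfrac{\delta^2}{L}\sum_{k=0}^{N-1}A_{k+1} + \tfrac{\delta}{L\sqrt{n}}\sum_{k=0}^{N-1}A_{k+1}\mathbb{E}M_k + \tfrac{\delta\sqrt{n}}{\sqrt{L}}\sum_{k=0}^{N-1}\alpha_{k+1}\mathbb{E}R_k.
\]
Since $A_N\mathbb{E}(f(x_N)-f_*) \leq \mathbb{E}\Psi_N$, it is enough to show that the right-hand side is at most $3\epsilon A_N$.

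The second step is to control $\mathbb{E}M_k$ and $\mathbb{E}R_k$ uniformly. Writing $B := \max_{l\leq N}\mathbb{E}\Psi_l$, Jensen together with $R_k^2 = 2V(x_*,u_k)$ gives $\mathbb{E}R_k \leq \sqrt{2\mathbb{E}\Psi_k} \leq \sqrt{2B}$. For $M_k = \|\nabla f(y_{k+1})\|_2$, using $\nabla f(x_*)=0$ and $L$-Lipschitzness of $\nabla f$ in $\|\cdot\|_2$ yields $M_k^2 \leq L\|y_{k+1}-x_*\|_L^2$; since (as observed in the proof of Lemma~\ref{lemAa3}) $y_{k+1}$ is a convex combination of $u_0,\dots,u_k$ with deterministic weights, convexity of $\|\cdot-x_*\|_L^2$ yields $\mathbb{E}\|y_{k+1}-x_*\|_L^2 \leq 2\max_{l\leq k}\mathbb{E}V(x_*,u_l) \leq 2B$, hence $\mathbb{E}M_k \leq \sqrt{2LB}$. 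Combining these with $\sum_{k=0}^{N-1}A_{k+1}\leq NA_N$ and $\sum_{k=0}^{N-1}\alpha_{k+1}\leq A_N$ turns the preceding display into a quadratic in $\sqrt{B}$:
\[
B \;\leq\; \tfrac12 P_0^2 + \tfrac{\delta^2 NA_N}{L} + \Bigl(\tfrac{\delta\sqrt{2}\,NA_N}{\sqrt{Ln}} + \tfrac{\delta\sqrt{2n}\,A_N}{\sqrt{L}}\Bigr)\sqrt{B}.
\]

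To close, I invoke Lemma~\ref{lemAa2_3}: the hypothesized $N$ gives $A_N \geq (N+2n-1)^2/(4n^2) \geq P_0^2/(2\epsilon)$, so $\tfrac12 P_0^2/A_N \leq \epsilon$, and also $N+2n-1 \leq \sqrt{2}\,n\sqrt{A_N}$, whence $NA_N \leq \sqrt{2}\,nA_N^{3/2}$. Solving the quadratic $B \leq a + c\sqrt{B}$ gives $B \leq 2(a+c^2)$; the first hypothesized bound on $\delta$ is exactly what is needed to force the deterministic noise term $\delta^2 NA_N/L \leq \epsilon A_N$ (hence $a = O(P_0^2)$), while the second, stricter bound on $\delta$ forces $c = O(P_0)$, so that $B = O(P_0^2)$ and consequently $c\sqrt{B} = O(\epsilon A_N)$. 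Summing the three contributions the total noise is at most $2\epsilon A_N$, and dividing $\mathbb{E}\Psi_N \leq \tfrac12 P_0^2 + 2\epsilon A_N$ by $A_N$ yields $\mathbb{E}(f(x_N)-f_*) \leq \epsilon + 2\epsilon = 3\epsilon$. The main obstacle I anticipate is the constant-chasing required to verify that the exact constants $4\sqrt[4]{2}$ and $96$ in the hypothesized bounds on $\delta$ are what is produced by splitting the $3\epsilon$ error budget among the four contributions and carefully resolving the quadratic in $\sqrt{B}$.
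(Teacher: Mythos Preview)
Your outline is sound and reaches the result, but the route differs from the paper's. The paper does not set up a bootstrap quadratic in $\sqrt{B}$ with $B=\max_{l\le N}\mathbb{E}\Psi_l$. Instead it isolates a standalone lemma (Lemma~\ref{lemAZ3_3}) asserting $\tfrac12\mathbb{E}R_K^2\le P_0^2$ for every $K\le N$, and proves it by forward induction on $K$: under the inductive hypothesis one has $\mathbb{E}R_k\le\sqrt{2}\,P_0$ and, via the convex-combination structure of $y_{k+1}$, $\mathbb{E}M_k\le\sqrt{2L}\,P_0$; summing Lemma~\ref{lemAZ2_3} up to $K-1$ and dropping $A_K\mathbb{E}(f(x_K)-f_*)\ge 0$ then gives
\[
\tfrac12\mathbb{E}R_K^2\;\le\;\tfrac12P_0^2+\frac{KA_K}{L}\delta^2+KA_K\,\frac{3\sqrt{2}P_0}{\sqrt{Ln}}\delta,
\]
where the paper first replaces $\sum_k\alpha_{k+1}$ by $K\alpha_K$ and then uses $n\alpha_K\le 2A_K$ (rather than your $\sum_k\alpha_{k+1}\le A_N$). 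The stated bounds on $\delta$ are precisely what forces each of the two noise terms to be $\le\tfrac14P_0^2$, closing the induction. Once $\mathbb{E}R_k\le\sqrt{2}P_0$ and $\mathbb{E}M_k\le\sqrt{2L}P_0$ are available, the theorem follows by the same telescoping with $K=N$, now keeping $A_N\mathbb{E}(f(x_N)-f_*)$ on the left and dividing through by $A_N$.

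Your bootstrap is a legitimate alternative and arguably more systematic, but two cautions. First, solving $B\le a+c\sqrt B$ gives $B\le 2a+c^2$ (not $2(a+c^2)$), and recovering the \emph{exact} constants $4\sqrt[4]{2}$ and $96$ from the quadratic resolution is fiddlier than in the paper, where the induction targets a fixed bound $P_0^2$ and the $\delta$-conditions are read off directly from ``each noise term $\le\tfrac14P_0^2$''. Second, your step $N+2n-1\le\sqrt{2}\,n\sqrt{A_N}$ is in the wrong direction (Lemma~\ref{lemAa2_3} only gives $A_N\ge(N-1+2n)^2/(4n^2)$, i.e.\ $N-1+2n\le 2n\sqrt{A_N}$); the paper instead plugs the explicit upper bounds $N\le\sqrt{2}\,nP_0/\sqrt\epsilon$ and $A_N\le 4P_0^2/\epsilon$ directly, which is what produces the stated powers of $\epsilon$ and $P_0$ in the $\delta$-condition.
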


\begin{remark}
\leavevmode

Чтобы теорема была корректна, требуется, чтобы $N = \ceil{\frac{\sqrt{2}nP_0}{\sqrt{\epsilon}} + 1 - 2n} \geq 1$. Рассмотрим случай, когда $\ceil{\frac{\sqrt{2}nP_0}{\sqrt{\epsilon}} + 1 - 2n} \leq 0$, это эквивалентно
\begin{gather*}
\frac{\sqrt{2}nP_0}{\sqrt{\epsilon}} + 1 - 2n \leq 0
\end{gather*}
Далее
\begin{gather*}
\sqrt{\epsilon} \geq \frac{\sqrt{2}nP_0}{2n - 1} \geq \frac{\sqrt{2}P_0}{2}\\
\epsilon \geq \frac{1}{2}P_0^2
\end{gather*}
Выпишем условие Липшица для $f(x)$
\begin{gather*}
f(x_0) - f(x_*) \leq \langle\nabla f(x_*), x_0 - x_*\rangle + \frac{1}{2}\norm{x_0 - x_*}_L^2 \leq \frac{1}{2}R_0^2 \leq \frac{1}{2}P_0^2 \leq \epsilon
\end{gather*}
Если $N \leq 0$, то $x_0$ является $\epsilon$-решением задачи оптимизации, поэтому далее будем считать, что $N \geq 1$.
\end{remark}

Для начала докажем следующее вспомогательное утверждение.

\begin{lemma}
	\begin{gather*}
	\frac{1}{2}\mathbb{E} R_K^2 \leq P_0^2 \,\,\,\, \forall K \leq N
	\end{gather*}
	\label{lemAZ3_3}
\end{lemma}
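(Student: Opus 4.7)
План состоит в индукции по $K$. База $K=0$ тривиальна: $R_0=\norm{u_0-x_*}_L$ и из определения $P_0$ следует $\tfrac12 R_0^2 \le \tfrac12 P_0^2 \le P_0^2$, поскольку $f(x_0)\ge f_*$. Для шага индукции я предполагаю, что $\tfrac12\mathbb{E}R_k^2 \le P_0^2$ для всех $k\le K-1$, и вывожу ту же оценку для $K$. В Лемме \ref{lemAZ2_3} возьму $u=x_*$; так как $V(x_*,u_k)=\tfrac12\norm{u_k-x_*}_L^2=\tfrac12 R_k^2$, я получу одношаговую оценку вида
$$A_{k+1}\mathbb{E}_{k+1} f(x_{k+1}) - A_k f(x_k) + \tfrac{1}{2}\mathbb{E}_{k+1} R_{k+1}^2 - \tfrac{1}{2} R_k^2 \le \alpha_{k+1} f_* + \tfrac{A_{k+1}\delta^2}{L} + \tfrac{A_{k+1}\delta M_k}{L\sqrt n} + \tfrac{\alpha_{k+1}\delta\sqrt n}{\sqrt L} R_k.$$

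Далее я возьму полное математическое ожидание, воспользуюсь $A_{k+1}=A_k+\alpha_{k+1}$ и просуммирую по $k=0,\dots,K-1$. Суммы $A_k f(x_k)$ и $R_k^2$ телескопируются; слагаемое $A_K\mathbb{E}(f(x_K)-f_*)\ge 0$ я отброшу, а $A_0=1-\tfrac{1}{n}$ позволит из $\tfrac12 R_0^2 + A_0(f(x_0)-f_*)$ собрать ровно $\tfrac12 P_0^2$. Получится
$$\tfrac{1}{2}\mathbb{E}R_K^2 \le \tfrac{1}{2}P_0^2 + \tfrac{\delta^2}{L}\sum_{k=0}^{K-1} A_{k+1} + \tfrac{\delta}{L\sqrt n}\sum_{k=0}^{K-1} A_{k+1}\,\mathbb{E}M_k + \tfrac{\delta\sqrt n}{\sqrt L}\sum_{k=0}^{K-1}\alpha_{k+1}\,\mathbb{E}R_k.$$

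Основная техническая трудность, как я ожидаю, связана с ограничением $\mathbb{E}M_k=\mathbb{E}\norm{\nabla f(y_{k+1})}_2$, поскольку $y_{k+1}$ зависит от всей предыстории. Здесь я использую, что $Q=\mathds{R}^n$, значит $\nabla f(x_*)=0$, и по липшицевости $M_k\le L\norm{y_{k+1}-x_*}_2 = \sqrt L\,\norm{y_{k+1}-x_*}_L$. По Лемме \ref{lemAa3} $y_{k+1}$ есть выпуклая комбинация $u_0,\dots,u_k$, и по неравенству Йенсена $\norm{y_{k+1}-x_*}_L\le \sum_l \gamma_k^l R_l \le \max_{l\le k}R_l$; взяв математическое ожидание и применив предположение индукции, получу $\mathbb{E}M_k \le \sqrt{2L}\,P_0$. Оценка $\mathbb{E}R_k \le \sqrt{\mathbb{E}R_k^2}\le \sqrt{2}\,P_0$ следует из Йенсена и того же предположения индукции.

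Остаётся проверить, что условия теоремы на $N$ и $\delta$ гарантируют, что каждое из трёх слагаемых ошибки не превосходит, скажем, $\tfrac16 P_0^2$. По Лемме \ref{lemAa2_3} $A_N \le (N-1+2n)^2/(2n^2)$, а выбор $N=\lceil\tfrac{\sqrt 2 n P_0}{\sqrt\epsilon}+1-2n\rceil$ даёт $A_N\lesssim P_0^2/\epsilon$ и $N A_N\lesssim nP_0^3/\epsilon^{3/2}$. Тогда первое слагаемое контролируется условием $\delta \le \tfrac{\epsilon^{3/4}\sqrt L}{4\sqrt[4]{2}\sqrt{nP_0}}$ (именно оно даёт зависимость $\delta\sim \epsilon^{3/4}$), а второе и третье — условием $\delta \le \tfrac{\epsilon^{3/2}\sqrt L}{96\sqrt n P_0^2}$. Сложив все вклады, я получу $\tfrac12\mathbb{E}R_K^2 \le P_0^2$, замыкая индукцию.
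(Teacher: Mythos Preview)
Your plan is correct and follows essentially the same route as the paper: take $u=x_*$ in Lemma~\ref{lemAZ2_3}, take full expectation, telescope over $k=0,\dots,K-1$, drop $A_K\mathbb{E}(f(x_K)-f_*)\ge 0$, recognise $\tfrac12 R_0^2+A_0(f(x_0)-f_*)=\tfrac12 P_0^2$, and bound the three error terms using the inductive hypothesis and the explicit conditions on $N,\delta$.

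One caution: the intermediate bound $\sum_l \gamma_k^l R_l \le \max_{l\le k} R_l$ is unnecessary and, taken literally, does not give what you want --- the inductive hypothesis controls each $\mathbb{E}R_l$ but not $\mathbb{E}\max_l R_l$. Since the coefficients $\gamma_k^l$ from Lemma~\ref{lemAa3} are deterministic (the $\alpha_k,A_k$ are fixed in this section), take expectation directly on the convex combination to get $\mathbb{E}M_k\le \sqrt{L}\sum_l\gamma_k^l\,\mathbb{E}R_l\le \sqrt{2L}\,P_0$, exactly as the paper does.
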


\begin{proof}

Для $K = 0$ это верно

\begin{gather*}
\frac{1}{2}\mathbb{E}R_0^2 = \frac{1}{2}R_0^2 \leq \frac{1}{2}P_0^2
\end{gather*}

Воспользуемся следующими 2 фактами, по индукции:
\begin{enumerate}
\item 
\begin{gather*}
	\frac{1}{2}(\mathbb{E}R_k)^2 \leq \frac{1}{2}\mathbb{E}R_k^2 \leq P_0^2;\\
\end{gather*}
\begin{gather}
\label{induction1}
	\mathbb{E}R_k \leq \sqrt{2}P_0
\end{gather}

\item
Так как оптимизация происходит на $\mathds{R}^n$, то $\nabla f(x_*) = 0$, поэтому
\begin{gather*}
M_k = \norm{\nabla f(y_{k+1})} = \norm{\nabla f(y_{k+1}) - \nabla f(x_*) }_2 \leq \sqrt{L}\norm{y_{k+1} - x_*}_L \leq_{{\tiny \circled{1}}} \\ \leq \sqrt{L}\sum_{k=0}^{K - 1} q_k R_k\\ \sum_{k=0}^{K-1} q_k = 1
\end{gather*}
${{\tiny \circled{1}}}$ - следует из Леммы \ref{lemAa3}

В конечном счете
\begin{gather}
\label{induction2}
\mathbb{E}M_k \leq \sqrt{2L} P_0
\end{gather}

\end{enumerate}

Докажем далее по индукции.
Из леммы \ref{lemAZ2_3} возьмем от обеих частей неравенства полное математическое ожидание и просуммируем все неравенства по $k = 0, ..., K - 1$ и воспользуемся (\ref{induction1}), (\ref{induction2}). Зафиксируем $u = x_*$.
\begin{gather*}
		A_{K} \mathbb{E}f(x_K) - A_{0} f(x_0) + \mathbb{E}V(x_*, u_K) - V(x_*, u_0) \leq (A_K - A_0)f_* +\\+ \sum_{k = 0}^{K - 1}\frac{A_{k+1}}{L}\delta^2 +
						\sum_{k = 0}^{K - 1}A_{k+1}\delta \frac{\sqrt{2}P_0}{\sqrt{Ln}} + \sum_{k = 0}^{K - 1}\alpha_{k+1} \frac{\sqrt{2}\sqrt{n}\delta P_0}{\sqrt{L}}
\end{gather*}
Так как $\alpha_k \leq \alpha_K$ и $A_k \leq A_K$ $\forall k \leq K$
\begin{gather*}
		A_{K} \mathbb{E}f(x_K) - A_{0} f(x_0) + \mathbb{E}V(x_*, u_K) - V(x_*, u_0) \leq (A_K - A_0)f_* +\\+ \frac{KA_{K}}{L}\delta^2 +
						KA_{K}\delta \frac{\sqrt{2}P_0}{\sqrt{Ln}} + K\alpha_{K} \frac{\sqrt{2}\sqrt{n}\delta P_0}{\sqrt{L}}
\end{gather*}	
Так как $2A_K = \frac{(K - 1 + 2n)^2 + K - 1}{2n^2} \geq \frac{(K - 1 + 2n)^2}{2n^2} \geq \frac{2n(K - 1 + 2n)}{2n^2} \geq \alpha_K n$.
\begin{gather*}
		A_{K} \mathbb{E}f(x_K) - A_{0} f(x_0) + \mathbb{E}V(x_*, u_K) - V(x_*, u_0) \leq (A_K - A_0)f_* +\\ \frac{KA_{K}}{L}\delta^2 +
						KA_{K}\delta \frac{\sqrt{2}P_0}{\sqrt{Ln}} + KA_{K}\delta \frac{2\sqrt{2} P_0}{\sqrt{Ln}}
\end{gather*}
Из того, что $V(x_*, u_K) \geq 0$ и $\frac{1}{2}P_0^2 = \frac{1}{2}R_0^2 + (1 - \frac{1}{n}) (f(x_0) - f_*) = \frac{1}{2}\norm{u_0 - x_*}_L^2 + (1 - \frac{1}{n}) (f(x_0) - f_*) = V(x_*, u_0) + (1 - \frac{1}{n}) (f(x_0) - f_*)$
\begin{gather*}
	\frac{1}{2}\mathbb{E} R_K^2 \leq \frac{1}{2}P_0^2 + \frac{KA_{K}}{L}\delta^2 +
					KA_{K}\delta \frac{3\sqrt{2}P_0}{\sqrt{Ln}}
\end{gather*}

Используя то, что

\begin{gather*}
K \leq N = \ceil{\frac{\sqrt{2}nP_0}{\sqrt{\epsilon}} + 1 - 2n} \leq \frac{\sqrt{2}nP_0}{\sqrt{\epsilon}}\\
A_K \leq A_N \leq \frac{(N - 1 + 2n)^2}{2n^2} \leq \frac{(\frac{\sqrt{2}nP_0}{\sqrt{\epsilon}} + 1)^2}{2n^2} \leq \frac{4P_0^2}{\epsilon}
\end{gather*}

Два последних слагаемых должны быть меньше или равны $\frac{1}{4}P_0^2$, поэтому
\begin{gather*}
\min\Big\{\frac{P_0\sqrt{L}}{2\sqrt{NA_N}}, \frac{\sqrt{Ln} P_0^2}{12\sqrt{2} A_N N P_0}\Big\} \geq \\\geq
\min\Bigg\{\frac{P_0\sqrt{L}}{2\sqrt{(\frac{\sqrt{2}nP_0}{\sqrt{\epsilon}})\Big(\frac{4P_0^2}{\epsilon}\Big)}},
 \frac{\sqrt{Ln} P_0}{12\sqrt{2}(\frac{\sqrt{2}nP_0}{\sqrt{\epsilon}}) \Big(\frac{4P_0^2}{\epsilon}\Big) }\Bigg\} =\\=
\min\Bigg\{\frac{\epsilon^{\frac{3}{4}}\sqrt{L}}{4\sqrt[4]{2}\sqrt{nP_0}},
 \frac{\epsilon^{\frac{3}{2}}\sqrt{L}}{96\sqrt{n}P_0^2}\Bigg\} = \delta
\end{gather*}

Взяв $\delta$ таким образом, мы в конечном счете получим условие леммы.

\end{proof}

Докажем основную теорему

\begin{proof}
	
	Аналогично, из леммы \ref{lemAZ2_3} возьмем от обеих частей неравенства полное математическое ожидание и просуммируем все неравенства по $k = 0, ..., N - 1$ и воспользуемся (\ref{induction1}), (\ref{induction2}).
	
	\begin{gather*}
		A_{N} \mathbb{E}f(x_N) - A_{0} f(x_0) + \mathbb{E}V(u, u_N) - V(u, u_0) \leq (A_N - A_0)f_* +\\+ \sum_{k = 0}^{K - 1}\frac{A_{k+1}}{L}\delta^2 +
								\sum_{k = 0}^{K - 1}A_{k+1}\delta \frac{\sqrt{2}P_0}{\sqrt{Ln}} + \sum_{k = 0}^{K - 1}\alpha_{k+1} \frac{\sqrt{2}\sqrt{n}\delta P_0}{\sqrt{L}}
	\end{gather*}
	
	Возьмем $u = x_*$. Так как $\mathbb{E}V(u, u_N) \geq 0$, $\alpha_k \leq \alpha_K$, $A_k \leq A_K$ $\forall k \leq K$ и $2A_K \geq \alpha_K n$.
	
	\begin{gather*}
		A_{N} (\mathbb{E}f(x_N) - f_*) \leq P_0^2 + \frac{KA_{K}}{L}\delta^2 +
							KA_{K}\delta \frac{3\sqrt{2}P_0}{\sqrt{Ln}}
	\end{gather*}
	Так как
	\begin{gather*}
	\delta \leq \min\Big\{\frac{P_0\sqrt{L}}{2\sqrt{NA_N}}, \frac{\sqrt{Ln} P_0^2}{12\sqrt{2} A_N N P_0}\Big\}
	\end{gather*}
	
	\begin{gather*}
		\mathbb{E}f(x_N) - f_* \leq \frac{P_0^2}{A_{N}} +\frac{1}{4}\frac{P_0^2}{A_{N}} + \frac{1}{4}\frac{P_0^2}{A_{N}}
	\end{gather*}
	
	$N$ выбиралось таким образом, чтобы $\frac{\frac{1}{2}P_0^2}{A_{N}} \leq \epsilon$, поэтому 
	
	\begin{gather*}
			\mathbb{E}f(x_N) - f_* \leq 3\epsilon
		\end{gather*}
	
\end{proof}

\begin{corollary}
\leavevmode

Отметим два практически важных случая:

\begin{enumerate}
\item Предположим, что $e_{k+1}$ распределены равномерно на ортах, то есть равновероятно разыгрывается случайная координата $i \in [1 \dots n]$, тогда на каждой итерации $e_{k+1}$ выбирается следующим образом
\begin{gather*}
e_{k+1}^j =\begin{cases} 
1, & j = i \\
0,& j \neq i \end{cases}
\end{gather*}

Данный метод соответствует координатному методу, в самом деле (пусть $\delta = 0$):
\begin{gather*}
\widetilde{\nabla}f(y) = n(\langle\nabla f(y), e_{k+1}\rangle) e_{k+1} = n \sum_{j = 1}^{n}\frac{\partial f(y)}{\partial y_j} e_{k+1}^j e_{k+1} = n \frac{\partial f(y)}{\partial y_i} e_{k+1}
\end{gather*}

То есть
\begin{gather*}
\{\widetilde{\nabla}f(y)\}_j = \begin{cases} 
n\frac{\partial f(y)}{\partial y_i}, & j = i \\
0,& j \neq i \end{cases}
\end{gather*}

\item
Рассмотрим другой, не менее важный пример, связанный с безградиентным методом. Будем предполагать, что у нас нет доступа к градиенту функции, поэтому мы попробуем оценить истинный градиент с помощью разностной аппроксимации следующим образом
\begin{gather*}
\widetilde{\nabla} f(x) = \frac{n}{\tau}((f(x+\tau e_{k+1}) + \delta^1_{k+1}) - (f(x) + \delta^2_{k+1})) e_{k+1},
\end{gather*}
где $e_{k+1}$ - случайный вектор равномерно распределенный на сфере.
Если бы $\delta^1_{k+1} = \delta^2_{k+1} = 0$, то мы смогли просто устремить $\tau$ к нулю. Но на практике все вычисления функций происходит с некоторой точностью, поэтому и возникают ненулевые слагаемые $\delta^1_{k+1}$ и $\delta^2_{k+1}$, о которых только известно, что они ограничены $\delta$ (например, это может быть машинной точностью ЭВМ).

Приведем данную аппроксимацию градиента к стандартному виду, чтобы могли применить Теорему \ref{mainTheorem3}.
\begin{gather*}
\widetilde{\nabla} f(x) = \frac{n}{\tau}((f(x+\tau e_{k+1}) + \delta^1_{k+1}) - (f(x) + \delta^2_{k+1})) e_{k+1} = \\=
n \langle \nabla f(x), e_{k+1}\rangle e_{k+1} + \frac{n}{\tau}(f(x+\tau e_{k+1}) - f(x) - \tau \langle \nabla f(x), e_{k+1}\rangle +\\
+ \delta^1_{k+1} - \delta^2_{k+1})e_{k+1}
\end{gather*}

Возьмем $\widetilde{\delta}_{k+1} = \frac{1}{\tau}(f(x+\tau e_{k+1}) - f(x) - \tau \langle \nabla f(x), e_{k+1}\rangle + \delta^1_{k+1} - \delta^2_{k+1})$. Оценим $\abs{\widetilde{\delta}_{k+1}}$:
\begin{gather*}
\abs{\widetilde{\delta}_{k+1}} = \abs{\frac{1}{\tau}(f(x+\tau e_{k+1}) - f(x) - \tau \langle \nabla f(x), e_{k+1}\rangle + \delta^1_{k+1} - \delta^2_{k+1})} \leq \\ \leq
\frac{1}{\tau}\abs{f(x+\tau e_{k+1}) - f(x) - \tau \langle \nabla f(x), e_{k+1}\rangle} + \frac{1}{\tau}\abs{\delta^1_{k+1} - \delta^2_{k+1}}
\end{gather*}
Из Липшивости и выпуклости $0 \leq f(x+\tau e_{k+1}) - f(x) - \tau \langle \nabla f(x), e_{k+1}\rangle \leq \frac{L \tau^2}{2}$, поэтому
\begin{gather*}
\abs{\widetilde{\delta}_{k+1}} \leq \frac{L \tau^2}{2 \tau} + \frac{2 \delta}{\tau} =
\frac{L \tau}{2} + \frac{2 \delta}{\tau}
\end{gather*}
Обозначим $\hat{\delta} = \frac{L \tau}{2} + \frac{2 \delta}{\tau}$. Чтобы выполнялась Теорема \ref{mainTheorem3}, нужно, чтобы выполнялось следующее условие
\begin{gather*}
\hat{\delta} \leq \min\Bigg\{\frac{\epsilon^{\frac{3}{4}}\sqrt{L}}{4\sqrt[4]{2}\sqrt{nP_0}},
	 \frac{\epsilon^{\frac{3}{2}}\sqrt{L}}{96\sqrt{n}P_0^2}\Bigg\}
\end{gather*}

Проминимизируем $\hat{\delta}$ по $\tau$, тогда $\hat{\delta} = 2 \sqrt{L \delta}$ и $\tau = 2\sqrt{\frac{\delta}{L}}$. То есть достаточно взять $\tau = 2\sqrt{\frac{\delta}{L}}$ и $\delta$:

\begin{gather*}
\hat{\delta} = 2 \sqrt{L \delta} \leq \min\Bigg\{\frac{\epsilon^{\frac{3}{4}}\sqrt{L}}{4\sqrt[4]{2}\sqrt{nP_0}},
	 \frac{\epsilon^{\frac{3}{2}}\sqrt{L}}{96\sqrt{n}P_0^2}\Bigg\}\\
\delta \leq \min\Bigg\{\frac{\epsilon^{\frac{3}{2}}}{64\sqrt{2} n P_0},
	 \frac{\epsilon^{3}}{36864 n P_0^4}\Bigg\}
\end{gather*}

То есть, чтобы обеспечить сходимость метода, ошибка $\delta$ должна быть порядка $\mathcal{O}(\frac{\epsilon^{3}}{n})$, а $\tau$ - $\mathcal{O}(\frac{\epsilon^{\frac{3}{2}}}{\sqrt{n}})$.

\end{enumerate}

\end{corollary}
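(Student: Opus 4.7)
План разделяется на два независимых пункта следствия; в обоих случаях стратегия одна и та же --- проверить, что предложенный выбор $e_{k+1}$ (и $\widetilde{\delta}_{k+1}$ во втором случае) удовлетворяет условиям Определения \ref{defRand1}, после чего напрямую применима Теорема \ref{mainTheorem3}.

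Для пункта (1) я бы сначала проверил два распределительных условия на $e_{k+1}$: так как каждая реализация --- это орт $\hat{e}_i$ с равномерно выбранным $i$, то $e_{k+1}^i e_{k+1}^j = 0$ почти наверное при $i\neq j$ (отсюда нулевые кросс-моменты), а $\mathbb{E}[(e_{k+1}^i)^2] = \mathbb{P}(i) = 1/n$. Подстановка $e_{k+1}=\hat{e}_i$ в $\widetilde{\nabla}f(y)=n\langle\nabla f(y),e_{k+1}\rangle e_{k+1}$ (при $\delta=0$) сразу даёт вектор с единственной ненулевой компонентой $n\,\partial f(y)/\partial y_i$ в позиции $i$, то есть классический рандомизированный координатный шаг; концептуальных трудностей здесь не возникает.

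Для пункта (2) главная идея --- привести конечно-разностную оценку к каноническому виду $n(\langle\nabla f(x),e_{k+1}\rangle+\widetilde{\delta}_{k+1})e_{k+1}$ прибавлением и вычитанием $\tau\langle\nabla f(x),e_{k+1}\rangle$ внутри скобок. Это выделяет эффективный шум $\widetilde{\delta}_{k+1}=\tau^{-1}\bigl(f(x+\tau e_{k+1})-f(x)-\tau\langle\nabla f(x),e_{k+1}\rangle+\delta^1_{k+1}-\delta^2_{k+1}\bigr)$. Оценку $|\widetilde{\delta}_{k+1}|$ я бы разбил на две части: остаточный член второго порядка ограничен $L\tau^2/2$ в силу $L$-липшицевости $\nabla f$ (и неотрицателен благодаря выпуклости), а разность погрешностей оракула не превосходит $2\delta$ по модулю. Суммируя, получаем $|\widetilde{\delta}_{k+1}|\leq\hat{\delta}=L\tau/2+2\delta/\tau$.

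Заключительный шаг --- элементарная одномерная оптимизация: минимум $\hat{\delta}$ по $\tau>0$ достигается при $\tau^\ast=2\sqrt{\delta/L}$ и равен $\hat{\delta}^\ast=2\sqrt{L\delta}$ (AM--GM или обнуление производной). Подставляя это в условие Теоремы \ref{mainTheorem3} на допустимый шум и возводя в квадрат, задача сводится к решению $2\sqrt{L\delta}\leq\min\{\cdot,\cdot\}$ относительно $\delta$, откуда получается заявленное ограничение порядка $\mathcal{O}(\epsilon^{3}/n)$ на $\delta$ и $\mathcal{O}(\epsilon^{3/2}/\sqrt{n})$ на $\tau$. Основное препятствие, на мой взгляд, не математическое, а бухгалтерское --- нужно аккуратно раздельно прогнать обе ветви минимума из Теоремы \ref{mainTheorem3} и не перепутать степени $\epsilon$, $n$ и $P_0$ при взятии квадратов.
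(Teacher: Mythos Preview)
Your proposal is correct and follows essentially the same route as the paper: the corollary in the paper is self-contained, and your plan reproduces its argument step by step --- the algebraic rewriting of $\widetilde{\nabla}f$, the bound $|\widetilde{\delta}_{k+1}|\le L\tau/2+2\delta/\tau$ via convexity and $L$-smoothness, the one-variable minimization in $\tau$, and the subsequent substitution into the hypothesis of Theorem~\ref{mainTheorem3}. The only addition on your side is the explicit verification of the moment conditions $\mathbb{E}\,e_{k+1}^i e_{k+1}^j=0$ and $\mathbb{E}(e_{k+1}^i)^2=1/n$ for the coordinate case, which the paper leaves implicit.
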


\section{Заключение}

В данной работе были представлены модификации зеркального метода треугольника из Раздела \ref{sec:mmt}. Стоит отметить, что базовый метод из Раздела \ref{sec:mmt} был независимо предложен ранее в \cite{lan2012optimal}, но незначительные изменения базового алгоритма, проделанные нами, позволяют получать оценки быстрого градиентного метода для различных задач, в частности, нам удалось получить алгоритм адаптивного быстрого градиентного метода для задачи минимакса, далее довольно просто обобщить метод для $(\delta, L)$-оракула. К этому всему нам удалось представить метод для случая, когда вместо градиента у нас имеется некоторая случайная оценка с шумом, в конечном счете удалось получить ограничения на шум, при которых метод имеет скорость сходимости быстрого градиентного метода.

\bibliography{bibl}{}
\bibliographystyle{plain}

\appendix
\section{Доказательство для зеркального метода треугольника} \label{appendix:profMMT}
		
\begin{lemma}
	\label{lemAa2}
	Пусть для последовательности $\alpha_k$ выполнено
	\begin{align*}
	A_k = \sum_{i = 0}^{k}\alpha_i\\
	\alpha_0 = 0\\
	A_k = L\alpha_k^2
	\end{align*}
	Тогда верно следующее рекуррентное соотношение $\forall k \geq 0$

	 \begin{align*}
	 \alpha_{k+1} = \frac{1}{2L} + \sqrt{\frac{1}{4L^2} + \alpha_k^2}
	 \end{align*}
	 и $\forall k \geq 1$
	 \begin{gather*}
	 \alpha_{k} \geq \frac{k+1}{2L}\\
	 A_k \geq \frac{(k+1)^2}{4L}
	 \end{gather*}
\end{lemma}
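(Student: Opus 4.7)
The plan is to first derive the explicit recurrence for $\alpha_{k+1}$ from the two defining relations, then use it to bound $\alpha_k$ from below by induction, and finally transfer the bound on $\alpha_k$ to a bound on $A_k$ via the identity $A_k = L\alpha_k^2$.

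First I would combine $A_{k+1} = A_k + \alpha_{k+1}$ with $A_{k+1} = L\alpha_{k+1}^2$ and $A_k = L\alpha_k^2$ to obtain the quadratic equation
\begin{equation*}
L\alpha_{k+1}^2 - \alpha_{k+1} - L\alpha_k^2 = 0.
\end{equation*}
Since the sequence $\alpha_k$ is supposed to be positive (and indeed the ZMT iterates require the largest root), I take the larger root, which yields
\begin{equation*}
\alpha_{k+1} = \frac{1+\sqrt{1+4L^2\alpha_k^2}}{2L} = \frac{1}{2L} + \sqrt{\frac{1}{4L^2}+\alpha_k^2}.
\end{equation*}

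Next I would verify the base case $k=1$: from $\alpha_0=0$ and the recurrence, $\alpha_1 = 1/L$, which matches $(k+1)/(2L) = 1/L$. For the inductive step I would assume $\alpha_k \geq (k+1)/(2L)$ and estimate
\begin{equation*}
\alpha_{k+1} = \frac{1}{2L} + \sqrt{\frac{1}{4L^2}+\alpha_k^2} \geq \frac{1}{2L} + \alpha_k \geq \frac{1}{2L} + \frac{k+1}{2L} = \frac{k+2}{2L},
\end{equation*}
using only the elementary inequality $\sqrt{a^2+b^2}\geq b$ for $a,b\geq 0$.

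Finally, the bound on $A_k$ follows immediately without a separate induction: since $A_k = L\alpha_k^2$, the lower bound on $\alpha_k$ gives
\begin{equation*}
A_k = L\alpha_k^2 \geq L\cdot\frac{(k+1)^2}{4L^2} = \frac{(k+1)^2}{4L}.
\end{equation*}
There is no substantial obstacle here; the only subtle point is choosing the larger root of the quadratic, which is forced by the requirement that $\alpha_{k+1}>0$ (the smaller root is negative when $\alpha_k>0$), and the inductive step where one must recognize that $\sqrt{1/(4L^2)+\alpha_k^2}\geq \alpha_k$ suffices, i.e.\ one does not need a tighter lower bound on the square root.
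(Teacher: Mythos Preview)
Your proof is correct and follows essentially the same route as the paper: derive the quadratic from $A_{k+1}=A_k+\alpha_{k+1}$ and $A_k=L\alpha_k^2$, take the larger root, then bound $\alpha_{k+1}\geq \frac{1}{2L}+\alpha_k$ by induction and read off the bound on $A_k$ from $A_k=L\alpha_k^2$. The only cosmetic difference is that the paper writes the constant term of the quadratic as $A_k$ before substituting $A_k=L\alpha_k^2$, whereas you substitute immediately.
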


\begin{proof}
	\begin{equation*}
		L\alpha^2_{k+1} = A_{k+1}
	\end{equation*}
	\begin{equation*}
		L\alpha^2_{k+1} = A_{k} + \alpha_{k}
	\end{equation*}
	\begin{equation*}
		L\alpha^2_{k+1} - \alpha_{k+1} - A_{k} = 0 
	\end{equation*}
	Решая данное квадратное уравнение получаем, что
	\begin{equation*} 
		\alpha_{k+1} = \frac{1 \pm \sqrt{\uprule 1 + 4LA_{k}}}{2L}
	\end{equation*}
	\begin{equation*}
		\alpha_{k+1} = \frac{1}{2L} + \sqrt{\frac{1}{4L^2} + \frac{A_{k}}{L}} = \frac{1}{2L} + \sqrt{\frac{1}{4L^2} + \alpha_k^2}
	\end{equation*}
	Если $k = 0$, то получаем, что $\alpha_{1} = \frac{1}{L}$ и $A_1 \geq \frac{1}{L}$, база индукции верна. Пусть данная лемма верна для $k$, докажем для $k+1$:
	\begin{equation*}
	\alpha_{k+1} = \frac{1}{2L} + \sqrt{\frac{1}{4L^2} + \alpha_k^2} \geq \frac{1}{2L} + \alpha_k
	\end{equation*}
	Из того, что
	\begin{equation*}
	\alpha_{k} \geq \frac{k+1}{2L},
	\end{equation*}
	получаем:
	\begin{equation*}
	\alpha_{k+1} \geq \frac{k + 2}{2L}
	\end{equation*}
	и
	\begin{equation*}
	A_{k+1} = L \alpha_{k+1}^2 \geq \frac{(k + 2)^2}{4L}
	\end{equation*}
\end{proof}

\begin{lemma}
	$\forall u \in Q$ выполнено
	\begin{equation*}
	\alpha_{k+1}\langle \nabla f(y_{k+1}), u_k - u\rangle \leq A_{k+1}(f(y_{k+1}) - f(x_{k+1})) + V(u, u_k) - V(u, u_{k+1}) 
	\end{equation*}
	\label{lemAZ1}
\end{lemma}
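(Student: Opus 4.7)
The plan is to split the inner product and control the two pieces by separate techniques:
\[
\alpha_{k+1}\langle \nabla f(y_{k+1}), u_k - u\rangle = \alpha_{k+1}\langle \nabla f(y_{k+1}), u_k - u_{k+1}\rangle + \alpha_{k+1}\langle \nabla f(y_{k+1}), u_{k+1} - u\rangle.
\]
Call these pieces (I) and (II). Piece (II) is a standard composite-prox term: I would apply the three-point inequality from Lemma \ref{lemma_maxmin_2} with the linear function $\psi(x) = \alpha_{k+1}[f(y_{k+1}) + \langle \nabla f(y_{k+1}), x - y_{k+1}\rangle]$, whose composite minimizer over $Q$ (in the sense of $\psi(x) + V(x, u_k)$) is exactly $u_{k+1}$. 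This yields
\[
\alpha_{k+1}\langle \nabla f(y_{k+1}), u_{k+1} - u\rangle \leq V(u, u_k) - V(u, u_{k+1}) - V(u_{k+1}, u_k).
\]

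For piece (I), the key observation is the algebraic identity that follows immediately from the definitions (\ref{eqymir}) and (\ref{eqxmir}) of $y_{k+1}$ and $x_{k+1}$:
\[
y_{k+1} - x_{k+1} = \frac{\alpha_{k+1}}{A_{k+1}}(u_k - u_{k+1}).
\]
This lets me rewrite piece (I) as $A_{k+1}\langle \nabla f(y_{k+1}), y_{k+1} - x_{k+1}\rangle$. Applying $L$-smoothness of $f$ (the standard descent lemma) then bounds it by $A_{k+1}(f(y_{k+1}) - f(x_{k+1})) + \tfrac{L A_{k+1}}{2}\norm{x_{k+1} - y_{k+1}}^2$. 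Substituting the algebraic identity into the quadratic term and using $A_{k+1} = L\alpha_{k+1}^2$ from Lemma \ref{lemAa2}, the residual quadratic collapses to exactly $\tfrac{1}{2}\norm{u_k - u_{k+1}}^2$.

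Adding the bounds for (I) and (II) produces the right-hand side of the claim plus the extra term $\tfrac{1}{2}\norm{u_k - u_{k+1}}^2 - V(u_{k+1}, u_k)$, which is non-positive by $1$-strong convexity of the prox-function (equivalently $V(x, y) \geq \tfrac{1}{2}\norm{x - y}^2$). So everything collapses to the stated inequality. I do not expect a serious obstacle here: the only delicate point is the exact numerical match between the Lipschitz residual and the strong-convexity slack, which is arranged precisely by the algorithmic choice $A_{k+1} = L\alpha_{k+1}^2$, so no positive leftover quadratic term survives the cancellation.
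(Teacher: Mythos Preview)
Your proposal is correct and follows essentially the same route as the paper's proof: the same splitting into two pieces, the same three-point Bregman inequality for the $u_{k+1}-u$ piece (the paper derives it directly from the optimality condition of (\ref{equmir}) rather than citing Lemma~\ref{lemma_maxmin_2}, but the content is identical), the same algebraic identity $y_{k+1}-x_{k+1}=\tfrac{\alpha_{k+1}}{A_{k+1}}(u_k-u_{k+1})$, and the same cancellation between the Lipschitz quadratic and $V(u_{k+1},u_k)$ via $A_{k+1}=L\alpha_{k+1}^2$. No substantive difference.
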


\begin{proof}
	\begin{gather*}
	\alpha_{k+1}\langle \nabla f(y_{k+1}), u_k - u\rangle = \\=
	\alpha_{k+1}\langle \nabla f(y_{k+1}), u_k - u_{k+1}\rangle + \alpha_{k+1}\langle \nabla f(y_{k+1}), u_{k+1} - u\rangle \leq_{{\tiny \circled{1}}} \\\leq
	\alpha_{k+1}\langle \nabla f(y_{k+1}), u_k - u_{k+1}\rangle + 
	\langle -\nabla_{u_{k+1}} V(u_{k+1}, u_k), u_{k+1} - u \rangle = \\=
	 \alpha_{k+1}\langle \nabla f(y_{k+1}), u_k - u_{k+1}\rangle + V(u, u_k) - V(u, u_{k+1}) - V(u_{k+1}, u_k) \leq \\\leq
	 \alpha_{k+1}\langle \nabla f(y_{k+1}), u_k - u_{k+1}\rangle + V(u, u_k) - V(u, u_{k+1}) - \frac{1}{2}\norm{u_k - u_{k+1}}^2 =_{{\tiny \circled{2}}} \\=
	 A_{k+1}\langle \nabla f(y_{k+1}), y_{k+1} - x_{k+1}\rangle + V(u, u_k) - V(u, u_{k+1}) - \frac{A_{k+1}^2}{2\alpha_{k+1}^2}\norm{y_{k+1} - x_{k+1}}^2 = \\=
	 A_{k+1}(\langle \nabla f(y_{k+1}), y_{k+1} - x_{k+1}\rangle - \frac{L}{2}\norm{y_{k+1} - x_{k+1}}^2) + V(u, u_k) - V(u, u_{k+1}) \leq_{{\tiny \circled{3}}} \\\leq
	 A_{k+1}(f(y_{k+1}) - f(x_{k+1})) + V(u, u_k) - V(u, u_{k+1})
	\end{gather*}
\end{proof}

{\small \circled{1}} - из условия оптимальности (\ref{equmir})

{\small \circled{2}} - из (\ref{eqxmir}) и (\ref{eqymir})

{\small \circled{3}} - условие Липшица

\begin{lemma}
	$\forall u \in Q$ выполнено
	\begin{equation*}
		A_{k+1} f(x_{k+1}) - A_{k} f(x_{k}) + V(u, u_{k+1}) - V(u, u_{k}) \leq \alpha_{k+1}f(u)
	\end{equation*}
	\label{lemAZ2}
\end{lemma}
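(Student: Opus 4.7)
The plan is to derive the stated inequality by starting with convexity of $f$ at the query point $y_{k+1}$, then splitting the inner product in the resulting bound into two pieces that connect to the previous iterate $x_k$ and to Lemma \ref{lemAZ1}, respectively. Specifically, I would begin from
\[
\alpha_{k+1}(f(y_{k+1}) - f(u)) \leq \alpha_{k+1}\langle \nabla f(y_{k+1}), y_{k+1} - u\rangle,
\]
and write this as
\[
\alpha_{k+1}\langle \nabla f(y_{k+1}), y_{k+1} - u_k\rangle + \alpha_{k+1}\langle \nabla f(y_{k+1}), u_k - u\rangle.
\]

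First I would handle the $y_{k+1}-u_k$ term. From the definition of $y_{k+1}$ in (\ref{eqymir}), namely $A_{k+1}y_{k+1} = \alpha_{k+1} u_k + A_k x_k$, one gets the algebraic identity $\alpha_{k+1}(y_{k+1}-u_k) = A_k(x_k - y_{k+1})$. Substituting and applying convexity of $f$ at $x_k$ gives
\[
\alpha_{k+1}\langle \nabla f(y_{k+1}), y_{k+1} - u_k\rangle = A_k\langle \nabla f(y_{k+1}), x_k - y_{k+1}\rangle \leq A_k(f(x_k) - f(y_{k+1})).
\]

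Next I would apply Lemma \ref{lemAZ1} directly to the $u_k - u$ term to obtain
\[
\alpha_{k+1}\langle \nabla f(y_{k+1}), u_k - u\rangle \leq A_{k+1}(f(y_{k+1}) - f(x_{k+1})) + V(u, u_k) - V(u, u_{k+1}).
\]
Adding the two bounds and using $A_{k+1} = A_k + \alpha_{k+1}$ causes every $f(y_{k+1})$ contribution to cancel: on the left one has a coefficient $\alpha_{k+1}$ of $f(y_{k+1})$, while on the right the coefficients combine as $-A_k + A_{k+1} = \alpha_{k+1}$. After rearrangement the inequality
\[
A_{k+1}f(x_{k+1}) - A_k f(x_k) + V(u, u_{k+1}) - V(u, u_k) \leq \alpha_{k+1} f(u)
\]
falls out.

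The proof is essentially just bookkeeping once Lemma \ref{lemAZ1} is in hand: there the heart of the argument lives, since that is where the extrapolation structure (\ref{eqxmir}) combined with the optimality of $u_{k+1}$ and the Lipschitz bound $A_{k+1} = L\alpha_{k+1}^2$ is used to absorb the quadratic term $\tfrac12 \|u_k - u_{k+1}\|^2$ into the progress $A_{k+1}(f(y_{k+1})-f(x_{k+1}))$. In the present lemma, the only nontrivial step is recognizing the convex-combination identity coming from (\ref{eqymir}) that rescales $\alpha_{k+1}$ to $A_k$; the rest is direct substitution and cancellation, so I do not expect any real obstacle.
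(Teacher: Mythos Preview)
Your proposal is correct and follows essentially the same route as the paper: start from convexity at $y_{k+1}$, split $y_{k+1}-u$ into $(y_{k+1}-u_k)+(u_k-u)$, use the identity $\alpha_{k+1}(y_{k+1}-u_k)=A_k(x_k-y_{k+1})$ from (\ref{eqymir}) together with convexity on the first piece, and invoke Lemma~\ref{lemAZ1} on the second piece. The cancellation of the $f(y_{k+1})$ terms via $A_{k+1}=A_k+\alpha_{k+1}$ is exactly as you describe.
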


\begin{proof}
	\begin{gather*}
		\alpha_{k+1}(f(y_{k+1}) - f(u)) \leq \\\leq
		\alpha_{k+1}\langle \nabla f(y_{k+1}), y_{k+1} - u\rangle = \\=
		\alpha_{k+1}\langle \nabla f(y_{k+1}), y_{k+1} - u_k\rangle + 
		\alpha_{k+1}\langle \nabla f(y_{k+1}), u_k - u\rangle =_{{\tiny \circled{1}}} \\=
		A_{k}\langle \nabla f(y_{k+1}), x_{k} - y_{k+1}\rangle + 
		\alpha_{k+1}\langle \nabla f(y_{k+1}), u_k - u\rangle \leq \\\leq
		A_{k}(f(x_k) - f(y_{k+1})) + \alpha_{k+1}\langle \nabla f(y_{k+1}), u_k - u\rangle \leq_{{\tiny \circled{2}}} \\\leq
		A_{k}(f(x_k) - f(y_{k+1})) + A_{k+1}(f(y_{k+1}) - f(x_{k+1})) + V(u, u_k) - V(u, u_{k+1})  = \\=
		\alpha_{k+1}f(y_{k+1}) + A_k f(x_{k}) - A_{k+1} f(x_{k+1}) + V(u, u_k) - V(u, u_{k+1})
	\end{gather*}
\end{proof}

{\small \circled{1}} - из (\ref{eqymir})

{\small \circled{2}} - из Леммы \ref{lemAZ1}

\begin{theorem}
	\begin{equation*}
	f(x_N) - f(x_*) \leq \frac{4LR^2}{(N+1)^2}
	\end{equation*}
\end{theorem}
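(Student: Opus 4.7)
The plan is to combine the per-iteration bound of Lemma \ref{lemAZ2} with the growth estimate for $A_k$ from Lemma \ref{lemAa2}. First I would instantiate Lemma \ref{lemAZ2} with $u = x_*$ and sum the resulting inequality telescopically over $k = 0, 1, \ldots, N-1$. Since $A_0 = \alpha_0 = 0$, the right-hand side collapses to $\sum_{k=0}^{N-1} \alpha_{k+1} f(x_*) = A_N f(x_*)$, while on the left the $A_{k+1} f(x_{k+1}) - A_k f(x_k)$ chain telescopes and the Bregman differences collapse, yielding
\begin{equation*}
A_N f(x_N) + V(x_*, u_N) - V(x_*, u_0) \leq A_N f(x_*).
\end{equation*}

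Next I would discard the nonnegative term $V(x_*, u_N)$ and apply the hypothesis $V(x_*, u_0) \leq R^2$ to obtain $A_N (f(x_N) - f(x_*)) \leq R^2$. Dividing by $A_N$ and substituting the lower bound $A_N \geq (N+1)^2/(4L)$ from Lemma \ref{lemAa2} produces exactly the desired estimate $f(x_N) - f(x_*) \leq 4LR^2/(N+1)^2$.

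Conditional on the two preparatory lemmas, the theorem is thus a short telescoping calculation, and the main obstacle sits upstream in Lemma \ref{lemAZ1}. Proving that lemma requires assembling four ingredients simultaneously: the optimality condition for $u_{k+1}$ in (\ref{equmir}), the three-point Bregman identity $\langle -\nabla_{u_{k+1}} V(u_{k+1}, u_k),\, u_{k+1} - u\rangle = V(u, u_k) - V(u, u_{k+1}) - V(u_{k+1}, u_k)$, the strong convexity bound $V(u_{k+1}, u_k) \geq \tfrac{1}{2}\norm{u_{k+1} - u_k}^2$, and the coupling identity $A_{k+1}(y_{k+1} - x_{k+1}) = \alpha_{k+1}(u_k - u_{k+1})$ implied by (\ref{eqymir}) and (\ref{eqxmir}). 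The decisive design choice is $A_{k+1} = L\alpha_{k+1}^2$, which calibrates the strong-convexity quadratic so that, after rescaling by $A_{k+1}^2/\alpha_{k+1}^2$, it exactly matches the Lipschitz term $\tfrac{L}{2}\norm{y_{k+1} - x_{k+1}}^2$ and enables the cancellation that produces Lemma \ref{lemAZ1}. Lemma \ref{lemAZ2} then follows by invoking convexity of $f$ at $y_{k+1}$ together with the identity $A_k(y_{k+1} - x_k) = \alpha_{k+1}(u_k - y_{k+1})$ from (\ref{eqymir}) to rewrite $\alpha_{k+1}\langle \nabla f(y_{k+1}), y_{k+1} - u_k\rangle$ as $A_k \langle \nabla f(y_{k+1}), x_k - y_{k+1}\rangle$, and then applying Lemma \ref{lemAZ1}.
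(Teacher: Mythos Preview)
Your proposal is correct and follows essentially the same route as the paper: telescope Lemma~\ref{lemAZ2} over $k=0,\dots,N-1$, use $A_0=0$, drop $V(x_*,u_N)\ge 0$, bound $V(x_*,u_0)\le R^2$, and finish with the estimate $A_N\ge (N+1)^2/(4L)$ from Lemma~\ref{lemAa2}. Your additional summary of how Lemmas~\ref{lemAZ1} and~\ref{lemAZ2} are proved also matches the paper's arguments.
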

\begin{proof}
	
	Просуммируем нер-во из леммы \ref{lemAZ2} по $k = 0, ..., N - 1$
	
	\begin{gather*}
		A_{N} f(x_N) - A_{0} f(x_0) + V(u, u_N) - V(u, u_0) \leq (A_N - A_0)f(u) \\
		A_{N} f(x_N) + V(u, u_N) - V(u, u_0) \leq A_Nf(u)
	\end{gather*}
	
	Возьмем $u = x_*$, воспользуемся тем, что $V(x_*, u_N) \geq 0$ и $u_0 = x_0$, тогда
	
	\begin{gather*}
		A_{N} (f(x_N) - f_*) \leq R^2\,\,\,\{\stackrel{def}{=} V(x_*, x_0)\}
	\end{gather*}
	
\end{proof}

\begin{corollary}

Рассмотрим неравенство
\begin{gather*}
A_{N} f(x_N) + V(u, u_N) - V(u, u_0) \leq A_Nf(u)
\end{gather*}

Возьмем $u = x_*$, воспользуемся тем, что $f(x_N) \geq f(x_*)$, тогда

\begin{gather*}
V(x_*, u_{N}) \leq V(x_*, u_{0})
\end{gather*}

То есть последовательность $u_N$ ограничена.

\begin{gather*}
\frac{1}{2}\norm{x_* - u_{N}}^2 \leq V(x_*, u_{N}) \leq R^2
\end{gather*}

По индукции:

\begin{gather*}
\frac{1}{2}\norm{x_* - x_{k+1}}^2 = \frac{1}{2}\norm{x_* - \frac{\alpha_{k+1}u_{k+1} + A_k x_k}{A_{k+1}}}^2 =\\= \frac{1}{2}\norm{\frac{\alpha_{k+1}(x_* - u_{k+1}) + A_k(x_* - x_k)}{A_{k+1}}}^2 \leq \\\leq
\frac{1}{2}\frac{\alpha_{k+1}}{A_{k+1}}\norm{x_* - u_{k+1}}^2 + \frac{1}{2}\frac{A_k}{A_{k+1}}\norm{x_* - x_k}^2 \leq R^2
\end{gather*}

То есть последовательность, генерированная методом, ограничена.

\end{corollary}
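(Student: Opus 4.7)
Утверждение следствия естественно распадается на два логически независимых шага: сначала извлечь ограниченность $\{u_k\}$ из суммарного энергетического неравенства, уже полученного в доказательстве основной теоремы, а затем перенести ограниченность на $\{x_k\}$ индукцией по $k$, опираясь на структуру выпуклой комбинации из (\ref{eqxmir}).

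Первый шаг. Отправной точкой служит неравенство
$$A_N f(x_N) + V(u, u_N) - V(u, u_0) \leq A_N f(u),$$
полученное суммированием утверждения Леммы \ref{lemAZ2} по $k = 0, \ldots, N-1$. Подставляя $u = x_*$ и учитывая, что $f(x_N) \geq f(x_*) = f_*$, разность $A_N (f(x_N) - f_*)$ неотрицательна, и остаётся $V(x_*, u_N) \leq V(x_*, u_0) \leq R^2$. Применение стандартного свойства прокс-функции $V(x, y) \geq \frac{1}{2}\norm{x - y}^2$ сразу даёт $\frac{1}{2}\norm{x_* - u_N}^2 \leq R^2$ для любого $N$, откуда и следует требуемая ограниченность последовательности $\{u_k\}$.

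Второй шаг — индукция по $k$ с инвариантом $\frac{1}{2}\norm{x_* - x_k}^2 \leq R^2$. База $k = 0$ немедленна: $x_0 = u_0$, и по условию $V(x_*, x_0) \leq R^2$. Индуктивный переход опирается на представление (\ref{eqxmir}): точка $x_{k+1}$ есть выпуклая комбинация $u_{k+1}$ и $x_k$ с весами $\frac{\alpha_{k+1}}{A_{k+1}}$ и $\frac{A_k}{A_{k+1}}$, сумма которых равна $1$ в силу рекуррентности $A_{k+1} = A_k + \alpha_{k+1}$. По неравенству Йенсена, применённому к выпуклой функции $\norm{\cdot}^2$, получаем
$$\frac{1}{2}\norm{x_* - x_{k+1}}^2 \leq \frac{\alpha_{k+1}}{A_{k+1}}\cdot\frac{1}{2}\norm{x_* - u_{k+1}}^2 + \frac{A_k}{A_{k+1}}\cdot\frac{1}{2}\norm{x_* - x_k}^2 \leq R^2,$$
где первое слагаемое ограничено по результату первого шага, а второе — по индукционному предположению; выпуклая комбинация $R^2$ и $R^2$ снова даёт $R^2$.

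Каких-либо существенных технических препятствий здесь не возникает: единственный содержательный момент — применение выпуклости $\norm{\cdot}^2$ к комбинации из (\ref{eqxmir}); всё остальное сводится к прямой подстановке в уже выведенные неравенства и к использованию определяющего соотношения $A_{k+1} = A_k + \alpha_{k+1}$, обеспечивающего равенство весов единице.
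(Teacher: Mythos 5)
Ваше доказательство верно и по существу совпадает с рассуждением в работе: тот же вывод $V(x_*,u_N)\leq V(x_*,u_0)\leq R^2$ из просуммированного неравенства с подстановкой $u=x_*$, та же оценка $\frac{1}{2}\norm{x_*-u_N}^2\leq V(x_*,u_N)$ и та же индукция по $k$ с применением выпуклости $\norm{\cdot}^2$ к выпуклой комбинации из (\ref{eqxmir}). Дополнительных замечаний нет.
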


\end{document}